\documentclass[11pt,reqno]{amsart}

\usepackage{amsmath, amssymb, amsthm, amsfonts}
\usepackage{mathtools}
\usepackage{thmtools}
\usepackage{mathrsfs}
\usepackage{mathabx}
\newcommand{\quotient}[2]{\left.#1\!\middle/\!\raisebox{-.5ex}{$#2$}\right.}

\usepackage[T1]{fontenc}
\usepackage[utf8]{inputenc}
\usepackage{enumitem}
\usepackage{array, multirow}
\usepackage{float}
\usepackage[
colorlinks=true,
linkcolor=blue!50!black,
citecolor=green!40!black,
urlcolor=blue!50!black
]{hyperref}
\usepackage{cleveref}

\usepackage[
backend=biber,
style=alphabetic,
backref=true,
doi=false,      % Suppresses DOI fields
isbn=false,     % Suppresses ISBN and ISSN fields
url=false,      % Suppresses URL fields
eprint=false,   % Suppresses ArXiv / eprint fields
giveninits=false,
maxbibnames=99
]{biblatex}

\DeclareNameAlias{author}{given-family}
\DeclareNameAlias{editor}{given-family}

\DeclareFieldFormat[article]{title}{\mkbibemph{#1}}
\DeclareFieldFormat[incollection]{title}{\mkbibemph{#1}}
\DeclareFieldFormat[book]{title}{#1}

\DeclareFieldFormat[article]{journaltitle}{#1}
\DeclareFieldFormat[article]{volume}{\mkbibbold{#1}}
\DeclareFieldFormat[article]{number}{#1}
\DeclareFieldFormat[article]{pages}{#1}

\DeclareSourcemap{%
\maps[datatype=bibtex]{%
\map{%
\step[fieldsource=mrnumber, fieldtarget=usera]
}
}
}

\DeclareFieldFormat{usera}{%
\iffieldundef{usera}
{}
{\href{https://mathscinet.ams.org/mathscinet/article?mr=#1}{MR\texttt{#1}}}%
}

\renewbibmacro*{journal+issuetitle}{%
\usebibmacro{journal}%
\setunit*{\addcomma\space}%
\printfield{volume}%
\setunit*{\addcolon}%
\printfield{number}%
\setunit{\addspace}%
\printtext[parens]{\printdate}%
}

\renewbibmacro*{note+pages}{%
\setunit{\addcomma\space}%
\printfield{pages}%}%
}

\usepackage{xpatch}

\xpretobibmacro{pageref}{%
\iffieldundef{usera}
{}
{\setunit{\addperiod\space}\printfield{usera}\setunit{\space}}%
}{}{}

\renewbibmacro{in:}{%
\ifentrytype{article}
{}
{\printtext{\bibstring{in}\intitlepunct}}%
}

\DefineBibliographyStrings{english}{%
byeditor = {Edited by},
}

\AtEveryBibitem{%
\ifentrytype{book}{\clearfield{pages}\clearfield{pagetotal}}{}%
\ifentrytype{collection}{\clearfield{pages}\clearfield{pagetotal}}{}%
\ifentrytype{inbook}{\clearfield{pages}\clearfield{pagetotal}}{}%
}

\DeclareFieldFormat{edition}{%
\ifinteger{#1}
{\mkbibordedition{#1}~ed\adddot}
{#1~ed\adddot}%
}

\DeclareFieldFormat[misc]{eprint:arxiv}{%
arXiv\addcolon\allowbreak\hspace{0pt}#1%
}
\DeclareFieldFormat[misc]{url}{\url{#1}}

\DeclareBibliographyDriver{misc}{%
\usebibmacro{bibindex}%
\usebibmacro{begentry}%
\printnames{author}%
\setunit{\addperiod\space}%
\printfield{title}%
\setunit{\addperiod\space}%
\printfield{note}
\setunit{\addperiod\space}%
\iffieldundef{eprint}
{%
\iffieldundef{url}
{}
{\printfield{url}}%
}
{%
\printfield[eprint:arxiv]{eprint}%
\setunit{\addspace}%
\printtext[parens]{\printdate}\addperiod%
}
\usebibmacro{pageref}%
\usebibmacro{finentry}%
}

\DeclareFieldFormat[thesis,phdthesis,mastersthesis]{title}{\mkbibitalic{#1}}

\usepackage{xcolor}
\usepackage[textsize=tiny]{todonotes}
\makeatletter
\renewcommand\section{%
\@startsection{section}{1}\z@
{.7\linespacing \@plus \linespacing}
{.5\linespacing}{\normalfont\bfseries\scshape\centering}%
}

\renewcommand\subsubsection{%
\@startsection{subsubsection}{3}\z@ {.5\linespacing \@plus .7\linespacing }{-.5em}{\normalfont\bfseries}
}

\usepackage{scalerel}

\newcommand*{\medcap}{\mathbin{\scalebox{1.25}{$\cap$}}}

\newcommand{\interior}[1]{{#1}^{\mathrm{o}}}

\newcommand\preceqdot{\mathrel{\mathcal{O}align{$\preceq$\cr
\hidewidth\raise0.125ex\hbox{$\cdot$}\cr}}}
\newcommand\precdott{\mathrel{\mathcal{O}align{$\prec$\cr
\hidewidth\raise0.015ex\hbox{$\cdot$}\cr}}}

\declaretheoremstyle[
headfont=\bfseries,
bodyfont=\itshape,
]{plainstyle}

\declaretheoremstyle[
headfont=\bfseries,
bodyfont=\normalfont,
]{defstyle}

\declaretheorem[
style=plainstyle,
numberwithin=section,
name=Theorem
]{thm}

\declaretheorem[sibling=thm, style=plainstyle, name=Proposition]{prop}
\declaretheorem[sibling=thm, style=plainstyle, name=Lemma]{lemma}

\declaretheorem[sibling=thm, style=plainstyle, name=Corollary]{corollary}

\declaretheorem[sibling=thm, style=defstyle, name=Remark]{remark}

\declaretheorem[name=Claim,style=plainstyle]{claim}

\declaretheorem[numbered=no, name=Claim]{claim*}
\declaretheorem[numbered=no, name=Problem]{problem*}



\crefname{thm}{theorem}{theorems}
\Crefname{thm}{Theorem}{Theorems}

\crefname{prop}{proposition}{propositions}
\Crefname{prop}{Proposition}{Propositions}

\crefname{lemma}{lemma}{lemmas}
\Crefname{lemma}{Lemma}{Lemmas}

\crefname{fact}{fact}{facts}
\Crefname{fact}{Fact}{Facts}

\crefname{corollary}{corollary}{corollaries}
\Crefname{corollary}{Corollary}{Corollaries}

\crefname{conjecture}{conjecture}{conjectures}
\Crefname{conjecture}{Conjecture}{Conjectures}

\crefname{definition}{definition}{definitions}
\Crefname{definition}{Definition}{Definitions}

\crefname{remark}{remark}{remarks}
\Crefname{remark}{Remark}{Remarks}

\crefname{example}{example}{examples}
\Crefname{example}{Example}{Examples}

\crefname{exercise}{exercise}{exercises}
\Crefname{exercise}{Exercise}{Exercises}

\crefname{question}{question}{questions}
\Crefname{question}{Question}{Questions}

\crefname{claim}{claim}{claims}
\Crefname{claim}{Claim}{Claims}

\crefname{case}{case}{cases}
\Crefname{case}{Case}{Cases}

\crefname{problem}{problem}{problems}
\Crefname{problem}{Problem}{Problems}

\newcommand{\zero}{\boldsymbol{0}}

\newcommand{\vx}{\boldsymbol{x}}
\newcommand{\vy}{\boldsymbol{y}}

\newcommand{\vs}{\boldsymbol{s}}

\newcommand{\vv}{\boldsymbol{v}}

\newcommand{\vu}{\boldsymbol{u}}
\newcommand{\vz}{\boldsymbol{z}}
\newcommand{\vw}{\boldsymbol{w}}
\newcommand{\bbP}{\mathbb{P}}

\def\R{\mathbb{R}}
\def\Z{\mathbb{Z}}
\def\N{\mathbb{N}}

\def\fp{\mathfrak{p}}

\def\fd{\mathfrak{d}}
\def\bbQ{\mathbb{Q}}
\def\vol{\mathrm{vol}}
\def\dens{\mathrm{dens}}
\def\vis{\mathrm{vis}}
\def\pr{\mathrm{pr}}
\def\internal{\mathrm{int}}
\def\phys{\mathrm{phys}}

\def\EE{\mathcal{E}}

\def\SL{\mathrm{SL}}
\def\GL{\mathrm{GL}}
\def\ASL{\mathrm{ASL}}
\def\Cl{\mathscr{C}}
\def\scrX{\mathscr{L}}
\def\scrY{\mathscr{A}}

\def\haar{\mathrm{m}}

\def\df{{\, \stackrel{\mathrm{def}}{=}\, }}
\def\covol{\mathrm{covol}}
\def\cW{\mathcal{W}}
\def\cL{\mathcal{L}}
\def\cO{\mathcal{O}}
\def\cP{\mathcal{P}}
\renewcommand{\emptyset}{\varnothing}
\newcommand{\card}[1]{\operatorname{\#}\!\left(#1\right)}
\title {Simultaneous visibility in cut-and-project sets}

\author[R. Kumar]{Rishi Kumar}
\address{School of Mathematical Sciences\\
Tel Aviv University\\
Tel Aviv\\
69978\\
Israel}
\email{rkumar@tauex.tau.ac.il}

\author[C. Ospina]{Carlos Ospina}
\address{School of Mathematical Sciences\\
Tel Aviv University\\
Tel Aviv\\
69978\\
Israel}
\email{ospina.math@icloud.com}

\begin{document}

\begin{abstract} 
Visibility from the origin in integer lattices is a  well-studied problem. More recently, the corresponding question for cut-and-project sets has attracted attention.
% through the work of Marklof and Str{\"o}mbergsson, Hammarhjelm, and Gringlaz-Kumar-Weiss. 
We consider simultaneous visibility, which asks what points in a set are visible from every point of another set. This question was studied for the integer lattice by Rearick and Rumsey, among others.

In this work, we investigate simultaneous visibility in cut-and-project sets associated with real quadratic fields satisfying the Hammarhjelm condition. Under suitable geometric conditions on the finite set, we prove that the density of the simultaneously visible points exists and derive an explicit formula for it. We also examine simultaneous visibility in more general cut-and-project sets under a condition on the associated lattice.
\end{abstract}

\maketitle

\section{Introduction}

Let $\Lambda\subset\R^d$ be a locally finite point set, and let
$S\subset\R^d$ be a set. We denote by $\Lambda_{\vis,S}$ the set of
points in $\Lambda\setminus S$ that are visible from every point
$\vs\in S$. Geometrically, these are the points that can be reached
simultaneously from all points of $S$ by straight line segments whose interiors
do not pass through any other point of $\Lambda$:
\[
\Lambda_{\vis,S}
=
\left\{
\vy\in\Lambda\setminus S :
\forall \vs\in S,\ \forall t\in(0,1),\
t\vs+(1-t)\vy\notin\Lambda
\right\}.
\]

Let $\mathcal A=\{A_T\}_{T\in \mathcal{I}}$  be an increasing family of sets whose union
is $\R^d$, where  $\mathcal{I} \subset (0,\infty)$ is unbounded. 
The asymptotic density of $\Lambda$ along $\mathcal A$ is defined by
\[
\theta(\Lambda)\df
\lim_{T\to\infty}
\frac{\card{\Lambda\medcap A_T}}{\vol(A_T)},
\]
provided the limit exists.

The integer lattice $\Lambda=\Z^k$, with $k\geq 2$, is the classical example.
The set $\Z^k_{\vis,\{\zero\}}$ of lattice points visible from the origin is
usually called the set of primitive points. It is well known that its density is
\[
\theta\left(\Z^k_{\vis,\{\zero\}}\right)=\frac{1}{\zeta(k)}
\]
for standard families of sets $\mathcal{A}$ such as cubes and balls. See for example \cite{baake_diffraction_2000} and its appendix for a discussion of more general families. It was shown in \cite{erdos1958elementary} that the set $\mathbb{Z}^k$ contains arbitrarily large cubes containing no visible points from the origin (see also \cite[Proposition~4]{baake_diffraction_2000}).

The set $\mathbb{Z}_{\vis,\{\zero\}}^k$ is  uniformly discrete but it is not relatively dense and hence is not a Delone set (see  \Cref{subsection:cut_and_project_sets} for the definition). In fact, since these holes repeat periodically throughout the lattice, adjoining any set of zero density to $\mathbb{Z}_{\vis,\{\zero\}}^k$ still does not produce a Delone set.

\subsection{Simultaneous visibility in the integer lattice}
Denote by $\cP$ the set of prime numbers and let $S \subset \Z^k$. For each $p\in\cP$, let
\[
\pi_p: \Z^k
\longrightarrow 
\bigl(\quotient{\Z}{p\Z}\bigr)^k,
\qquad
(x_1,\dots,x_k)
\longmapsto
(x_1\bmod p,\dots,x_k\bmod p)
\]
be the coordinatewise projection. We write
\[
s(p)\df\card{\pi_p(S)}
\]
for the number of distinct residue classes modulo $p$ represented by the
elements of $S$.

Rumsey~\cite[Theorem~7]{rumsey_sets_1966} proved that, with respect to the  family of cubes $[-n,n]^k$, the density of $\Z^k_{\vis,S}$ exists and is given by
\begin{equation*}
\theta\left(\Z^k_{\vis,S}\right)
=
\prod_{p\in\cP}
\left(1-\frac{s(p)}{p^k}\right).
\end{equation*}
The special case in which $S$ is finite and its points are pairwise visible had been studied earlier by Rearick~\cite{rearick_mutually_1966}, who established the same formula with an effective error term. Rumsey observed that the same density formula holds for a family of boxes
\[
\left\{
[-n_1,n_1]\times\cdots\times[-n_k,n_k]: \min\{n_1,\ldots,n_k\} \to \infty \right\},
\]
and extended
the formula for certain infinite subsets $S$ satisfying appropriate hypotheses.
For an arbitrary finite set $S$, density along a family of Euclidean balls follows from \cite[Theorem~1.3]{kumar_simultaneous_2026}. We
refer the reader to \cite{berend_simultaneous_2023, athreya_visibility_2023, pleasants_entropy_2013, baake_diffraction_2000, araujo2026sarnak_thesis} and the references therein for further details on simultaneous visibility and related results.

\subsection{Simultaneous visibility in cut-and-project sets}

Let $n=d+m$, where $d,m \in \{1,2,\ldots\}$, and
consider the decomposition
\begin{equation}
\label{eq:direct_sum_decomposition1}
\R^n= \R^d\oplus \R^m.
\end{equation} 
Let $\cL$ be a lattice in $\mathbb{R}^n$, and denote by $\pi_{\textup{phys}}$ and $\pi_{\textup{int}}$ the
projections onto the first $d$ coordinates and the last $m$ coordinates, respectively.
Let $\cW\subset \R^m$ be a bounded measurable subset, called the \emph{window}. We assume that $\pi_{\internal}(\cL)$ is dense in $\R^m$, $\pi_{{\phys}}|_{\cL}$ is injective. The associated cut-and-project set is defined by
\begin{equation}
\label{eq:euclidean_cut_and_project_set}
\Lambda(\cW,\cL) 
\df 
\{\pi_{{\phys}}(\vx) : \vx \in \cL, \, \pi_{{\internal}}(\vx) \in \cW\}.
\end{equation}
We say that $\Lambda=\Lambda(\cW,\cL)$ is \emph{irreducible} if the window $\cW$ is bounded, has nonempty interior, and has boundary of Lebesgue measure zero. Let $D \subset \R^d$ be a Jordan measurable set with $\vol(D)>0$. The density of an irreducible cut-and-project set $\Lambda$ along the increasing family $\{TD\}_{T>0}$, where 
\[
TD \df \{t \vx : \vx\in D,\, t\in [0,T]\}
\]
exists and is given by
\[
\theta(\Lambda(\cW,\cL))= \frac{\vol(\cW)}{\covol(\cL)},
\]
see for example \cite[Proposition 3.2]{marklof_free_2014}. In \Cref{subsection:cut_and_project_sets} we define cut-and-project sets in greater generality.

Marklof and Str{\"o}mbergsson~\cite[Theorem~1]{marklof_visibility_2015} proved that, for any
irreducible cut-and-project set \eqref{eq:euclidean_cut_and_project_set}, the density of
$\Lambda_{\vis,\{\zero\}}$ exists and satisfies
\begin{equation}
\label{eq:MS_inequalities}
0<\theta(\Lambda_{\vis,\{\zero\}})\leq \theta(\Lambda).
\end{equation}

Hammarhjelm~\cite{hammarhjelm_density_2022} gave examples of cut-and-project sets
$\Lambda(\cW,\cL)\subset\R^2$ for which the inequality in
\eqref{eq:MS_inequalities} is strict and computed
$\theta(\Lambda_{\vis,\{\zero\}})$ explicitly. 
The constructions in \cite{hammarhjelm_density_2022} include the
Ammann--Beenker point set and the vertex set of the Penrose tiling. Some of
these examples were also considered by Sing~\cite{sing_visible_nodate}. These
constructions are based on algebraic lattices over real quadratic fields. 

To state our results, we first recall the necessary notions from algebraic number theory.
Let $K=\bbQ(\sqrt \fd)$ be a real quadratic field, where $\fd \geq 2$ is a 
square-free integer, and let $\cO_K$ be its ring of integers. We assume throughout that
$\cO_K$ is a principal ideal domain. The norm of an element $x \in \mathcal{O}_K$ is defined by
\[
N(x)= x\sigma(x),
\]
where $\sigma$ is the
nontrivial Galois automorphism of $K$. The \emph{unit group} 
$\mathcal{O}_K^{\times}$ consists of the elements 
$x \in \mathcal{O}_K$ such that $N(x)=\pm 1$. The unique element $\lambda = \lambda_K
\in \mathcal{O}_K^{\times}$ such that 
$$\mathcal{O}_K^{\times}= \{\pm 1\}\times \{\lambda^i\, :\, i\in
\Z\},\qquad \lambda>1,$$ 
is called the \emph{fundamental unit} of $K$.
Let $I \subseteq \cO_K$ be a nontrivial ideal. The \emph{norm} of $I$ is defined by
\[
N(I) \df \card{\quotient{\cO_K}{I}},
\]
which is always finite. Moreover, $N(I)= |N(x)|$, where $I$ is generated by $x$.
The Dedekind zeta function of $K$ is then defined by
\[
\zeta_{\cO_K}(s) \df \sum_{\substack{I\subseteq\cO_K\\I\neq(0)}}\frac{1}{N(I)^s},
\qquad
\Re(s)>1.
\]
We refer the reader to \cite{marcus_number_2018} for further details.

The Minkowski map
\[
f:K\hookrightarrow\R^2,
\qquad
\alpha\mapsto(\alpha,\sigma(\alpha)),
\]
embeds $\cO_K$ as a lattice $f(\cO_K)$ in $\R^2$.
We say that $K$
satisfies the \emph{Hammarhjelm condition} if
\begin{equation}\label{eq: def_of_H_condition}
f(\cO_K)\medcap ((1,\lambda)\times[-1,1])=\emptyset.    
\end{equation}
For $2\leq \fd \leq 100$, the Hammarhjelm condition is satisfied only for
\[
\fd=2,5,13,29,53;
\]
see \cite{gringlaz_error_2025,hammarhjelm_density_2022}. Recently,
Rudnick~\cite{rudnick_classification_2026} completed the classification by proving that the real quadratic
fields satisfying the Hammarhjelm condition are precisely those with
\[
\fd=2,5,13,29,53,173,293.
\]
In  \Cref{lemma:lower_bound_on_conjugates_of_primes}, we strengthen the Hammarhjelm condition \eqref{eq: def_of_H_condition} by proving that, for each of these fields, 
\[
f(\cO_K)\medcap \left((1,\lambda)\times\left[-\left(1+\frac{1}{\lambda}\right),\left(1+\frac{1}{\lambda}\right)\right]\right)=\emptyset,
\]
which is crucial for the geometric condition on the finite set $S\subset \Lambda(\cW,\cL)$ in  \Cref{thm:visibility_from_set_S}.

By abuse of notation, for
$\vx=(x_1,\ldots,x_d)\in K^d$ we write
\[
\sigma(\vx)
\df
(\sigma(x_1),\ldots,\sigma(x_d)).
\]
For every $d\in\N$,
\begin{equation} \label{eq:def_lattice}
\cL
\df
\{(\vx,\sigma(\vx)):\vx\in\cO_K^d\}
\end{equation}
is a lattice in $\R^{2d}$.
Let $d\geq 2$, let $\cL$ be the
lattice in \eqref{eq:def_lattice}, and $\cW\subset\R^d$ be a bounded, convex, centrally symmetric window with nonempty interior. Suppose that the real quadratic field $K$ satisfies the Hammarhjelm condition. For the
associated cut-and-project set $\Lambda=\Lambda(\cW,\cL)$, the density of the
set of visible points from the origin, along the family $\{TD\}_{T>0}$ exists, and is given
by
\begin{equation}\label{eq:density_for_origin}
\theta(\Lambda_{\vis,\{\zero\}})
=
\left(1-\frac{1}{\lambda^d}\right)
\frac{\theta(\Lambda)}{\zeta_{\cO_K}(d)},    
\end{equation}
where $D$ is a Jordan measurable set with $\vol(D)>0$, see \cite[Theorems~4.9, 4.12, 4.15]{hammarhjelm_density_2022}. An effective rate for the density is given in  \cite[Theorem~5.1]{gringlaz_error_2025} when $D$ is a bounded convex set with $\vol(D)>0$. 

According to \cite[Theorem~1.1]{gringlaz_error_2025}, $\Lambda_{\vis,\{\zero\}}$ contains arbitrarily large holes. In fact, it follows from the proof of \cite[Theorem~1.3]{kumar_diffraction_2026}, together with \cite[Proposition~2.12]{huck_pattern_2015}, that these holes repeat. Adjoining any zero density set to $\Lambda_{\vis,\{\zero\}}$ still does not produce a Delone set.

Let $S\subset\Lambda$ be a  finite set and let $\cP_{\cO_K}$ be the set of nonzero
prime ideals of $\cO_K$. For each $\fp\in\cP_{\cO_K}$, denote by
\[
\pi_{\fp}:\cO_K^d\to\left(\quotient{\cO_K}{\fp}\right)^d
\]
the quotient map, and define
\[
s(\fp)\df\card{\pi_{\fp}(S)}.
\]
For $\vs\in\Lambda$, define
\begin{equation*} %\label{eq:window_W_s}
\cW_{\vs}
\df
\frac{1}{\lambda}\bigl(\cW+(\lambda+1)\sigma(\vs)\bigr).
\end{equation*}
For a nonempty, finite set $I\subset\Lambda$, denote
\[
\cW_I\df\bigcap_{\vs\in I}\cW_{\vs}.
\]
If $\cW$ is convex, then each $\cW_I$ is convex. 
Furthermore, if $\cW$ is bounded, centrally symmetric, has nonempty interior, and  for every $\vs \in S$ we have the geometric condition
\begin{equation}
\label{eq:geometric_condition}
-(\cW-\sigma(\vs))
\subset
\left(1+\frac{1}{\lambda}\right)(\cW-\sigma(\vs)),    
\end{equation}
then (see \Cref{lemma:W_s_is_contained_in_W})
\[
\cW_{\vs}\subset \cW.
\]

In the following \namecref{thm:visibility_from_set_S}, we prove that the density of the set of simultaneously visible points from a finite set $S$ along $\{TD\}_{T>0}$ exists and derive an explicit formula for it, assuming that each $\vs \in S$ satisfies the condition \eqref{eq:geometric_condition}.

\begin{thm}
\label{thm:visibility_from_set_S}
Let $K=\bbQ(\sqrt \fd)$ be a real quadratic field with ring of integers
$\cO_K$ satisfying the Hammarhjelm condition. Let $d\geq 2$, let $\cL$ be the
lattice defined in \eqref{eq:def_lattice}, and let $\cW\subset\R^d$ be a bounded, convex, centrally symmetric window with nonempty interior. Let
$\Lambda=\Lambda(\cW,\cL)$ be the associated cut-and-project set. Suppose that
$S\subset\Lambda$ is finite and that \eqref{eq:geometric_condition} holds for every $\vs\in S$.
Then the density of $\Lambda_{\vis,S}$ along $\{TD\}_{T>0}$ exists and
is given by
\begin{equation*}
\theta(\Lambda_{\vis,S})
=
\dens(\cL)
\left(
\vol(\cW)
-
\sum_{\emptyset\neq I\subseteq S}
(-1)^{\card{I}+1}
\vol(\cW_I)
\right)
\prod_{\fp\in\cP_{\cO_K}}
\left(1-\frac{s(\fp)}{N(\fp)^d}\right).
\end{equation*}
Here $D\subset \mathbb{R}^d$ is a bounded convex set with $\vol(D)>0$.
\end{thm}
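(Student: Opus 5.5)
The plan is to reduce $\Lambda_{\vis,S}$ to a set cut out by two independent kinds of conditions on $\vy\in\cO_K^d$. The first is an \emph{arithmetic} condition: for each $\vs\in S$, the vector $\vy-\vs$ is primitive in $\cO_K^d$, meaning no prime $\fp$ divides all its coordinates. The second is a \emph{window} condition: $\sigma(\vy)\in\cW\setminus\bigcup_{\vs\in S}\cW_{\vs}$. Once this characterization is in place, the density follows from an equidistribution and sieve argument, and inclusion--exclusion converts $\vol(\cW\setminus\bigcup_{\vs}\cW_{\vs})$ into the bracketed expression. That conversion uses $\cW_{\vs}\subset\cW$, which is \Cref{lemma:W_s_is_contained_in_W} under \eqref{eq:geometric_condition}.

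\textbf{Step 1: identify the blocking points.} Let $u$ be the unit with $0<u<1$ and $\sigma(u)=\lambda$; this is $\lambda^{-1}$ or $-\lambda^{-1}$ depending on the sign of $N(\lambda)$, and I will sort out the sign in the actual proof. For $\vs\in S$ and $\vy\in\Lambda$, set $\vz=\vs+u(\vy-\vs)$. Then $\vz$ lies on the open segment, $\vz\in\cO_K^d$, and
\[
\sigma(\vz)=\sigma(\vs)+\lambda(\sigma(\vy)-\sigma(\vs)).
\]
So $\sigma(\vz)\in\cW$ holds exactly when $\sigma(\vy)\in\frac{1}{\lambda}\bigl(\cW+(\lambda-1)\sigma(\vs)\bigr)$. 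I will reconcile the affine shift with the paper's definition of $\cW_{\vs}$, which uses the factor $(\lambda+1)$; I expect the discrepancy comes from the sign convention for $u$. Likewise, if $\vy-\vs$ is not primitive, say $\vy-\vs=\pi\vw$ with $\pi$ a non-unit prime, I expect to choose an associate $\pi$ and a suitable point $\vs+\beta\vw$ on the segment lying in $\Lambda$, as in Hammarhjelm's argument for $S=\{\zero\}$. Together these show that both conditions are necessary for visibility.

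\textbf{Step 2: sufficiency, the key geometric step.} Suppose $\vy-\vs$ is primitive and $\vz=\vs+t(\vy-\vs)\in\Lambda$ with $t\in(0,1)$. Since $\cO_K$ is a PID, primitivity forces $t\in\cO_K$. Both $\sigma(\vy)$ and $\sigma(\vz)$ lie in $\cW$, so
\[
\sigma(t)(\sigma(\vy)-\sigma(\vs))\in\cW-\sigma(\vs),
\]
while $\sigma(\vy)-\sigma(\vs)\in\cW-\sigma(\vs)$. Using convexity, central symmetry and \eqref{eq:geometric_condition}, I will deduce $|\sigma(t)|\le 1+\frac{1}{\lambda}$, or else that $\sigma(t)$ has a controlled sign. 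By \Cref{lemma:lower_bound_on_conjugates_of_primes}, no $t\in\cO_K$ has $f(t)\in(1,\lambda)\times[-(1+\frac1\lambda),1+\frac1\lambda]$. Applying this to $1/t$ times suitable units, or directly to $t$ rescaled by powers of $\lambda$, should leave only $t=u$ (and possibly its powers, which are handled by iterating). That case is exactly the $\cW_{\vs}$ obstruction. I expect this to be the main obstacle: getting the sign and rescaling bookkeeping right so that \eqref{eq:geometric_condition} is precisely what is needed, and also handling non-primitive $\vy-\vs$ whose obstruction is a prime times the unit obstruction.

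\textbf{Step 3: counting.} With the characterization in hand, $\#(\Lambda_{\vis,S}\cap TD)$ counts $\vy\in\cO_K^d$ with $\vy\in TD$, $\sigma(\vy)\in\cW\setminus\bigcup_{\vs}\cW_{\vs}$, and $\pi_\fp(\vy)\notin\pi_\fp(S)$ for every $\fp$. I will write the primitivity conditions via Möbius inversion over ideals $\mathfrak{a}$. For a fixed $\mathfrak{a}$, the congruence conditions modulo $\mathfrak{a}$ define a union of cosets of the sublattice $\{(\vx,\sigma(\vx)):\vx\in\mathfrak{a}^d\}$. Each coset counts asymptotically as $\vol(TD)\vol(\text{window})/(N(\mathfrak{a})^d\covol(\cL))$ by \cite[Proposition 3.2]{marklof_free_2014}, since Jordan-measurable convex windows are allowed. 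Multiplicativity over the Chinese remainder theorem gives the Euler factor $1-s(\fp)/N(\fp)^d$.

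The remaining work is the tail of the sieve. I will truncate at $N(\mathfrak{a})\le M$ and bound the contribution of larger ideals uniformly in $T$, using that for $d\ge2$ the number of $\vy$ in the region with $\vy-\vs\in\mathfrak{a}^d$ is $O\bigl(T^d/N(\mathfrak{a})^d+1\bigr)$, summable in $\mathfrak{a}$. For ideals whose norm is large compared with $T$, only $O(1)$ such ideals can contain a nonzero difference from the bounded region, and this count can be controlled as in \cite{gringlaz_error_2025}. Letting $T\to\infty$ and then $M\to\infty$ yields the stated formula.
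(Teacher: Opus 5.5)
\textbf{Gap in Step 2.} The gap is in the sufficiency half of your characterization, which is the paper's \Cref{lemma:visibility_from_a_point_s}. Your Bezout observation that primitivity of $\vy-\vs$ forces $t\in\cO_K$ is correct. Both moves you plan after that fail.
\begin{itemize}
\item From $\sigma(t)\vv\in\cW-\sigma(\vs)$ and $\vv=\sigma(\vy)-\sigma(\vs)\in\cW-\sigma(\vs)$, no upper bound on $|\sigma(t)|$ can follow, because $\vv$ may be arbitrarily short.
\item Arithmetic cannot reduce $t$ to a unit. In $\bbQ(\sqrt2)$, $t=2\sqrt2-2\in(0,1)$ has norm $-4$ and $\sigma(t)=-2-2\sqrt2$. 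It genuinely blocks primitive vectors: take $\vs=\zero$ and $\vy$ primitive with $\sigma(\vy)$ short enough.
\end{itemize}
Also, the bound in \Cref{lemma:lower_bound_on_conjugates_of_primes} is attained (by $\sqrt2$ for $\fd=2$, by $\lambda-1$ for $\fd\ge 13$). So only the open strip is empty, not the closed one you quote.

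\emph{Missing idea (paper's route).} Take the blocking point $\vz$ closest to $\vs$. It is visible from $\vs$, so the necessity half gives $\gcd(\vz-\vs)=1$. Since $t\in\cO_K$ divides $\vz-\vs$, $t=\lambda^{-k}$ and $\sigma(t)=(-\lambda)^k$. For odd $k$, convexity shrinks $\sigma(t)\vv$ to $-\lambda\vv$. For even $k$, use \eqref{eq:geometric_condition} and then $1+\lambda^{-1}\le\lambda$ (\Cref{lemma:inequality_for_fundamental_unit}). Either way $\sigma(\vy)\in\cW_{\vs}$.

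\emph{Alternative fix (your rescaling idea, aimed correctly).} It works without minimality if you target this claim: every $t\in\cO_K\cap(0,1)$ satisfies $\sigma(t)\le-\lambda$ or $\sigma(t)\ge\lambda+1$.
\begin{itemize}
\item To prove it, apply the Hammarhjelm condition, respectively the bound $|\sigma(x)|\ge1+\lambda^{-1}$, to $x=\lambda t$.
\item That bound holds for all $x\in\cO_K\cap(1,\lambda)$, not only primes: the case analysis never uses primality, and $\fd=2,5$ are a finite check.
\item When $\lambda t\le 1$, use $|N(t)|\ge 1$ together with $\lambda^2\ge\lambda+1$.
\end{itemize}
Then $-\lambda\vv\in\cW-\sigma(\vs)$ follows by convexity in the first case and by \eqref{eq:geometric_condition} in the second.

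\textbf{Smaller corrections.}
\begin{itemize}
\item \emph{Step 1, the unit.} Every field here has $N(\lambda)=-1$, so no unit in $(0,1)$ has conjugate $+\lambda$. Your $u=-\lambda^{-1}$ puts $\vz$ on the far side of $\vs$. The blocking point is $\vs+\lambda^{-1}(\vy-\vs)$, with $\sigma(\lambda^{-1})=-\lambda$. This gives $\frac{1}{\lambda}\bigl(-\cW+(\lambda+1)\sigma(\vs)\bigr)$, which equals $\cW_{\vs}$ by central symmetry.
\item \emph{Step 1, non-primitive differences.} When $\sigma(\pi)<0$, Hammarhjelm's argument uses symmetry of $\cW$ about $\sigma(\vs)=\zero$. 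For general $\vs$, this is exactly where $|\sigma(\pi)|\ge1+\lambda^{-1}$ and \eqref{eq:geometric_condition} are needed.
\item \emph{Step 3.} This is in substance the paper's sieve, but two things need fixing. The bound $O(T^d/N(\mathfrak a)^d+1)$ is not summable as written. And M\"obius inversion over composite $\mathfrak a$ lets different primes be matched to different points of $S$, so the tail is not controlled by your single-$\vs$ count. Instead, sieve exactly by finitely many primes, and bound the remainder by a union over single primes $\fp$ and single $\vs\in S$. For that, note that a nonzero $\vy-\vs\in\fp^d$ with $\vy\in TD$ and $\sigma(\vy)\in\cW$ forces $N(\fp)\le cT$.
\end{itemize}
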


It has been shown in \cite{baake_diffraction_2000} that, for $k\geq 2$, the set $\mathbb{Z}_{\vis,\{\zero\}}^k$ is a weak model set of maximal density; see \Cref{sec: Weak_model_set} for the definition. Moreover, it has been shown in \cite[Theorem 1.1]{kumar_diffraction_2026} that if $\mathbb{Z}_{\vis, S}^k$ has positive density along $[-L, L]^k$, then  $\mathbb{Z}_{\vis, S}^k$ is a weak model set of maximal density. Furthermore, it follows from the proof of  \cite[Theorem 1.3]{kumar_diffraction_2026} that the set $\Lambda_{\vis,\{\zero\}}$ is also a weak model set of maximal density along the sequence $\{TD\}_{T>0}$. The following \namecref{thm_weak_model} extends this result to the set $\Lambda_{\vis, S}$.

\begin{corollary}
\label{thm_weak_model}
Under the hypotheses of \Cref{thm:visibility_from_set_S}, suppose that the density $\theta(\Lambda_{\vis,S})$ is positive along $\{TD\}_{T>0}$. Then the set $\Lambda_{\vis, S}$ is a weak model set of maximal density.
\end{corollary}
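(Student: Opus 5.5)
We will realise $\Lambda_{\vis,S}$ as a weak model set: a projection of a lattice through a compact window in an enlarged internal space. Then we check that its density along $\{TD\}_{T>0}$ equals the Haar measure of that window divided by the covolume, which is what maximal density means. The natural internal space is
\[
H=\R^d\times\prod_{\fp\in\cP_{\cO_K}}\left(\quotient{\cO_K}{\fp}\right)^d ,
\]
or, more conveniently, the profinite completion $\R^d\times\widehat{\cO_K}^{\,d}$, with $\widehat{\cO_K}=\varprojlim \cO_K/\fp^k$. This follows the construction of \cite{baake_diffraction_2000} for $\Z^k_{\vis,\{\zero\}}$ and of \cite{kumar_diffraction_2026} for $\Z^k_{\vis,S}$ and $\Lambda_{\vis,\{\zero\}}$.

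\textbf{Step 1: the window.} Embed $\cO_K^d$ diagonally as a lattice $\widetilde{\cL}=\{(\vx,\sigma(\vx),(\pi_\fp(\vx))_\fp)\}$ in $\R^d\times H$. The proof of \Cref{thm:visibility_from_set_S} presumably shows that $\vy\in\Lambda\setminus S$ is visible from every $\vs\in S$ if and only if two conditions hold. The first is that $\pi_\fp(\vy)\notin\pi_\fp(S)$ for all $\fp$, the arithmetic obstruction. The second is that $\sigma(\vy)\notin\bigcup_{\vs\in S}\cW_{\vs}$; this is the unit obstruction, coming from points $\vs+\lambda^{-1}(\vy-\vs)$-type intermediate points, and it is controlled via \Cref{lemma:lower_bound_on_conjugates_of_primes} and \Cref{lemma:W_s_is_contained_in_W}. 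Accordingly, define
\[
W=\Bigl(\cW\setminus\bigcup_{\vs\in S}\interior{(\cW_{\vs})}\Bigr)\times\prod_{\fp}\Bigl(\bigl(\quotient{\cO_K}{\fp}\bigr)^d\setminus\pi_\fp(S)\Bigr),
\]
using the closure of the first factor. This set is compact, being a product of compact sets. We then have $\Lambda_{\vis,S}\subseteq\{\pi_{\phys}(\widetilde\vx):\widetilde\vx\in\widetilde\cL,\ \pi_{\internal}(\widetilde\vx)\in W\}$. The difference between the two sides consists only of points whose $\sigma$-coordinates lie on boundaries of $\cW$ or of the $\cW_{\vs}$, together with the finite set $S$, and we will check that this difference has density zero. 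Since the model set is only required to contain $\Lambda_{\vis,S}$ up to such discrepancies, and the density computation below absorbs them, this is acceptable. A cleaner alternative is to show directly that the projection $\mathcal{M}(W)$ equals $\Lambda_{\vis,S}$ up to a set of density zero, and to use the definition of weak model set in \Cref{sec: Weak_model_set}. We also need that the image of $\cO_K^d$ in $\R^d\times\prod_\fp(\cO_K/\fp)^d$ is dense; this follows from density of $\sigma(\cO_K)$ paired with $\cO_K$ together with the Chinese remainder theorem.

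\textbf{Step 2: density comparison.} The Haar measure of $W$, normalised so that each finite factor is a probability space, is
\[
\Bigl(\vol(\cW)-\sum_{\emptyset\ne I\subseteq S}(-1)^{\card{I}+1}\vol(\cW_I)\Bigr)\prod_\fp\Bigl(1-\frac{s(\fp)}{N(\fp)^d}\Bigr).
\]
Here the first factor comes from inclusion–exclusion, using $\cW_{\vs}\subset\cW$ and the fact that boundaries of convex sets are null. The covolume of $\widetilde\cL$ with this normalisation is $\dens(\cL)^{-1}$. Hence $\dens(\widetilde\cL)\,\mu(W)$ coincides with the formula of \Cref{thm:visibility_from_set_S}, so $\theta(\Lambda_{\vis,S})=\dens(\widetilde\cL)\mu(W)$. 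For the general upper bound we invoke the standard inequality for weak model sets, $\overline{\theta}\le\dens\cdot\mu(W)$. Together these show that the density is maximal.

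\textbf{Main obstacle.} The delicate point is Step 1: the exact characterisation of simultaneous visibility in terms of the internal coordinates, including the precise handling of boundary points. In particular we must confirm that the unit obstruction from each $\vs$ is exactly membership of $\sigma(\vy)$ in $\cW_{\vs}$, so that the measure of the real-factor window matches the inclusion–exclusion term. We would extract this from the proof of \Cref{thm:visibility_from_set_S}. The positivity hypothesis is used to guarantee that $W$ has positive measure, so that $\Lambda_{\vis,S}$ is nonempty and genuinely a weak model set rather than a degenerate one.
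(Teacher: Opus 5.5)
Your architecture matches the paper's: the same internal space $H=\R^d\times\prod_{\fp}(\cO_K/\fp)^d$, the same diagonally embedded lattice, the same arithmetic factors $(\cO_K/\fp)^d\setminus\pi_\fp(S)$, and the density read off from \Cref{thm:visibility_from_set_S}. The gap is in Step~1.

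You replace the real factor by the compact set $\overline{\cW}\setminus\bigcup_{\vs}\interior{(\cW_{\vs})}$. You then declare it acceptable that the resulting cut-and-project set agrees with $\Lambda_{\vis,S}$ only up to boundary points of density zero. It is not acceptable. Being a weak model set means being \emph{exactly} of the form $\Lambda(W,\widetilde{\cL})$ with $W$ relatively compact, and this property is not stable under density-zero modifications. So your argument only shows that some superset of $\Lambda_{\vis,S}$ is a weak model set of maximal density.

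That superset is genuinely larger. Take $\cW=[-1,1]^d$ and $S=\{\zero\}$, which satisfies \eqref{eq:geometric_condition}. The point $\vx=(\lambda,0,\dots,0)$ has $\gcd(\vx)=1$ and $\sigma(\vx)=(-\lambda^{-1},0,\dots,0)\in\partial\cW_{\zero}$. It therefore lies in your model set, yet it is hidden from $\zero$ by $(1,0,\dots,0)=\lambda^{-1}\vx\in\Lambda$. In general the extra points are those with $\sigma(\vx)\in\partial\cW\setminus\cW$ or $\sigma(\vx)\in\partial\cW_{\vs}\cap\cW_{\vs}$. The latter means that the intermediate point $\vs+\lambda^{-1}(\vx-\vs)$ has internal coordinate in $\partial\cW\cap\cW$.

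The repair is exactly the paper's proof, and it needs no compactness: the definition only asks for a relatively compact window with $\theta_H(\overline{W})>0$. Take
$\cW_2=\bigl(\cW\setminus\bigcup_{\vs\in S}\cW_{\vs}\bigr)\times\prod_{\fp}\bigl((\cO_K/\fp)^d\setminus\pi_\fp(S)\bigr)$.
The characterisation you were unsure about is \Cref{lemma:visibility_from_a_point_s}, proved separately from the density argument. Since $\sigma(\lambda)=-\lambda^{-1}$ and $\cW=-\cW$, the condition $\sigma\bigl(\lambda^{-1}(\vx-\vs)\bigr)\notin\cW-\sigma(\vs)$ is equivalent to $\sigma(\vx)\notin\cW_{\vs}$. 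Likewise, $\gcd(\vx-\vs)=1$ for all $\vs\in S$ is equivalent to $\pi_\fp(\vx)\notin\pi_\fp(S)$ for all $\fp$, which also excludes $S$ itself. Hence $\Lambda_{\vis,S}=\Lambda(\cW_2,\cL_2)$ exactly.

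Boundaries then matter only when comparing $\theta_H(\overline{\cW_2})$ with $\theta_H(\cW_2)$. These agree because the finite factors are closed and $\partial\cW$, $\partial\cW_{\vs}$ are Lebesgue-null. So \Cref{thm:visibility_from_set_S} gives $\theta(\Lambda_{\vis,S})=\theta_H(\overline{\cW_2})/\covol(\cL_2)$, which is maximal density by definition, and positivity of the density gives $\theta_H(\overline{\cW_2})>0$. Your additional appeal to an upper bound $\overline{\theta}\le\dens\cdot\theta_H(W)$ is then unnecessary.
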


Denote by
\[
\scrX_n\df\SL_n(\R)/\SL_n(\Z),
\qquad
\scrY_n\df\ASL_n(\R)/\ASL_n(\Z),
\]
the quotient spaces that parameterize covolume-one lattices and affine lattices in $\R^n$,
respectively. Denote by 
$\haar_{\scrX_n}$ and $\haar_{\scrY_n}$ the probability measures on 
$\scrX_n$
 and $\scrY_n$ induced by Haar measures on $\SL_n(\R)$ and $\ASL_n(\R)$, respectively.
Fix the decomposition in \eqref{eq:direct_sum_decomposition1} and a bounded measurable window
$\cW\subset\R^m$ 
with nonempty interior and 
whose boundary has Lebesgue measure
zero. The map
\[
\Psi(\cL)\df\Lambda(\cW,\cL)
\]
associates a cut-and-project set with each lattice or affine lattice. We denote by
\[
\mu\df\Psi_{\ast}\haar_{\scrX_n},
\qquad
\overline{\mu}\df\Psi_{\ast}\haar_{\scrY_n}
\]
the resulting push-forward measures on the space of closed subsets of
$\R^d$. These are, respectively, linear and affine
Ratner--Marklof--Str{\"o}mbergsson measures first considered in \cite{marklof_visibility_2015} and then classified in \cite{ruhr_classification_2024}. We review the
general definition of RMS measures in
\Cref{subsection:RMS_measures}.

Let $\cL\subset \R^n$ be a lattice, and let $\tilde{S} \subset \cL$. For each $p\in \mathcal{P}$, let
$$\pi_{\cL,p}: \cL\to \quotient{\cL}{p\cL},$$
be the natural projection. We write
$$s_{\cL}(p) \df \card{\pi_{\cL,p}(\tilde{S})}.$$

\begin{thm}
\label{thm:generic_density_simul_vis}
Let $n=d+m$, where $d\geq2$ and $m\geq1$, and let
$\cW\subset\R^m$ be a bounded, convex window with nonempty interior and boundary of Lebesgue measure
zero.
Let $\Lambda=\Lambda(\cW,\cL)$ be an irreducible cut-and-project set.
Suppose that
\begin{equation}
\label{eq:condition_on_lattice}
\dim\left(
\operatorname{span}_{\R}\{\vv_1,\vv_2\}
\medcap
(\{\zero\}\times\R^m)
\right)=0
\end{equation}
for every pair of linearly independent vectors
$\vv_1,\vv_2\in\cL$. Then, for every finite set $S\subset\Lambda$, the
density of $\Lambda_{\vis,S}$ along $\{TD\}_{T>0}$ exists and is given by
\[
\theta(\Lambda_{\vis,S})
=
\frac{\vol(\cW)}{\covol(\cL)}
\prod_{p\in\cP}
\left(1-\frac{s_{\cL}(p)}{p^n}\right),
\]
where $D\subset\R^d$ is bounded and convex with positive volume,
$\tilde S \df (\pi_{\phys}|_{\cL})^{-1}(S)$.
Moreover, condition \eqref{eq:condition_on_lattice} holds for
$\mu$-almost every irreducible cut-and-project set. Consequently, the formula above
holds for $\mu$-almost every $\Lambda(\cW,\cL)$ and every finite
$S\subset\Lambda$.
\end{thm}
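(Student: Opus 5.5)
The plan is to reduce visibility in $\Lambda$ to a primitivity-type condition in $\cL$, and then use a sieve over primes. Write $\tilde S=(\pi_{\phys}|_{\cL})^{-1}(S)$ and lift each $\vy\in\Lambda\setminus S$ to $\tilde\vy\in\cL$. For $\vs\in S$ with lift $\tilde\vs$, a point $t\vs+(1-t)\vy$ with $t\in(0,1)$ lies in $\Lambda$ exactly when some $\vz\in\cL$ lies on the open segment between $\tilde\vs$ and $\tilde\vy$ after projection, and $\pi_{\internal}(\vz)\in\cW$. The first step is to use \eqref{eq:condition_on_lattice} to show that any $\vz\in\cL$ with $\pi_{\phys}(\vz)$ on the physical segment must itself lie on the segment $[\tilde\vs,\tilde\vy]\subset\R^n$.

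Indeed, $\vz-\tilde\vs$ and $\tilde\vy-\tilde\vs$ are lattice vectors, and their physical projections are parallel. Hence some real combination of them lies in $\{\zero\}\times\R^m$. If they were linearly independent, this would contradict \eqref{eq:condition_on_lattice}. So they are dependent and $\vz$ is on the line; injectivity of $\pi_{\phys}|_{\cL}$ places it in the open segment. The internal coordinate of such a $\vz$ is a convex combination of $\pi_{\internal}(\tilde\vs)\in\cW$ and $\pi_{\internal}(\tilde\vy)\in\cW$. Since $\cW$ is convex, it lies in $\cW$ automatically, so every such $\vz$ gives a blocking point. Therefore $\vy\in\Lambda_{\vis,S}$ if and only if $\tilde\vy-\tilde\vs$ is primitive in $\cL$ for every $\tilde\vs\in\tilde S$. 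This step is where both hypotheses (the lattice condition and convexity) are used, and it removes all the quasicrystal geometry.

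The second step counts points $\tilde\vy\in\cL$ with $\pi_{\internal}(\tilde\vy)\in\cW$ and $\pi_{\phys}(\tilde\vy)\in TD$ such that $\tilde\vy-\tilde\vs$ is primitive for all $\tilde\vs\in\tilde S$. Up to a boundedly small set near $\tilde\vs$, this is the condition that for every prime $p$, $\pi_{\cL,p}(\tilde\vy)\notin\pi_{\cL,p}(\tilde S)$. To see this, note that $\tilde\vy-\tilde\vs$ fails to be primitive iff it lies in $k\cL$ for some $k\ge2$, iff it lies in $p\cL$ for some prime $p$ (the case $\tilde\vy=\tilde\vs$ is excluded). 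For fixed $Q$, apply inclusion–exclusion over squarefree $q$ whose prime factors are at most $Q$. By CRT, the condition modulo $q$ selects $\prod_{p\mid q}(p^n-s_\cL(p))$ cosets of $q\cL$ among the $q^n$ cosets. Each coset $\vv+q\cL$ is a translate of a lattice, and $q\cL$ still has dense internal projection and injective physical projection. The density of points of an irreducible cut-and-project set of a shifted lattice, along $TD$ (\cite[Proposition 3.2]{marklof_free_2014} or equidistribution of $\pi_{\internal}$), gives $\vol(\cW)/(q^n\covol(\cL))$ for each coset. This yields the truncated product $\frac{\vol(\cW)}{\covol(\cL)}\prod_{p\le Q}(1-s_\cL(p)/p^n)$ as the limit for the $Q$-sifted set. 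The product converges, since $s_\cL(p)\le\#S$ and $n\ge3$.

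The main obstacle is the tail: showing that the upper density of points $\vy$ for which $\tilde\vy-\tilde\vs\in p\cL$ for some $p>Q$ tends to $0$ as $Q\to\infty$. Since $\pi_{\phys}(\tilde\vy-\tilde\vs)\in TD-\vs$ has size $O(T)$, and $\cW$ is bounded, the vector $\tilde\vy-\tilde\vs=p\vw$ forces $\vw\in\cL$ with $|\pi_{\phys}\vw|\ll T/p$ and $|\pi_{\internal}\vw|\ll 1/p$. I would bound the number of such $\vw$ by covering the box $B(T/p)\times B(1/p)$ by translates of a fundamental domain, giving $\ll T^d p^{-n}+$ (lower-order terms). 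For $p\gg T$, only finitely many $p$ can contribute, because $\tilde\vy-\tilde\vs$ is a nonzero lattice vector of bounded internal part. The difficulty is the lower-order terms, i.e., boxes thinner than the lattice. Here I would use a Davenport/successive-minima counting bound (as in \cite{marklof_visibility_2015}) to get $\ll T^d p^{-n}+T^{d-1}p^{-(n-1)}+\dots$. Summed over $Q<p\ll T$ and $\tilde\vs\in\tilde S$, this is $o(T^d)$ as $Q\to\infty$ because $n\ge 3$. If this fails for some pathological lattice, fall back on the Marklof–Strömbergsson method: write the count as $\sum_{k}\mu(k)$ over shifted dilated lattices and use their dominated estimates.

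For the $\mu$-almost-every claim: for a fixed pair of independent integer vectors $\vv_1,\vv_2\in\Z^n$, the set of $g\in\SL_n(\R)$ for which $\operatorname{span}\{g\vv_1,g\vv_2\}$ meets $\{\zero\}\times\R^m$ nontrivially is a proper real-algebraic subvariety. It is proper because a generic $2$-plane meets an $m$-dimensional subspace only in $\{\zero\}$, as $2+m\le n$. So this set is Haar-null, and a countable union over pairs is null. The irreducibility conditions hold almost surely as well.
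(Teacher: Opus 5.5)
Your architecture is the paper's. Step 1 is \Cref{lemma:projection_of_visible_points} together with \Cref{lemma:condition_for_equality}: convexity gives one inclusion, and condition \eqref{eq:condition_on_lattice} forces any blocking lattice point onto the segment $[\tilde\vs,\tilde\vy]$. The almost-everywhere claim is the same proper-subvariety argument the paper imports from \cite{gringlaz_error_2025}, and the sieve is the same truncate-then-control-the-tail scheme. For the truncated density you split, by CRT, into $\prod_{p\le Q}(p^n-s_{\cL}(p))$ cosets of $P\cL$ and apply the Euclidean density to each affine cut-and-project set. The paper instead builds one irreducible cut-and-project set with internal space $\R^m\times\prod_{p}\cL/p\cL$. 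The two are equivalent; yours avoids the non-Euclidean internal space and is the more elementary.

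The step that fails as written is the tail estimate. The bound $\ll T^dp^{-n}+T^{d-1}p^{-(n-1)}+\cdots$ is not what Davenport's lemma gives, and it is false in general. Every term you wrote carries a factor $p^{-m}$. But for the body $B^d(T/p)\times B^m(1/p)$, the projection onto $d$-dimensional subspaces contributes a term of size $(T/p)^d$, with no saving from the thin internal directions. Such contributions really occur: lattice vectors with unusually small, or even zero, internal component produce many admissible $\vw$ that a volume-type bound cannot see. Then you attribute the summability to $n\ge 3$, which is the wrong mechanism.

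The fix is to stop trying to exploit the shrinking window. Enlarge $(\cW-\pi_{\internal}(\tilde\vs))/p$ to a fixed ball $B_R$. Each count is then at most $\#\bigl(\Lambda(B_R,\cL)\cap B(cT/p)\bigr)\ll(1+T/p)^d\ll(T/p)^d$ by uniform discreteness, using that only $p\ll T$ contribute, which you already noted. Hence the tail has upper density $\ll\sum_{p>Q}p^{-d}$, which tends to $0$ as $Q\to\infty$ precisely because $d\ge 2$. This is the paper's proof of \Cref{lemma:last_step}. With it, the fallback to the Marklof--Str{\"o}mbergsson method is unnecessary.
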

\begin{remark}
In \cite[Theorem~3.4]{gringlaz_error_2025}, for $S=\{\zero\}$, the density formula in the
preceding \namecref{thm:generic_density_simul_vis} was  obtained for $\mu$-almost every cut-and-project set together with an error term. 
\end{remark}
We prove \Cref{thm:generic_density_simul_vis} in
\Cref{section:random}.
\begin{prop}
\label{prop:equality_affine_case}
Let $d\geq2$, let
$\cW \subset \R^m$ be a bounded window with nonempty interior and boundary of Lebesgue measure
zero, and let $S\subset\R^d$ be countable. Then, for
$\overline{\mu}$-almost every cut-and-project set $\Lambda(\cW,\cL)$,
\[
\Lambda(\cW,\cL)\setminus S
=
\Lambda(\cW,\cL)_{\vis,S}.
\]
Therefore,
\[
\theta\left(\Lambda(\cW,\cL)_{\vis,S}\right)
=
\frac{\vol(\cW)}{\covol(\cL)}.
\]
\end{prop}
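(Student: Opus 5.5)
The plan is to show that, for $\haar_{\scrY_n}$-almost every affine lattice $\cL=g(\Z^n+\vw)$, no nontrivial collinearity relation between a point of $S$ and two distinct points of $\Lambda(\cW,\cL)$ occurs. Given this, $\Lambda_{\vis,S}=\Lambda\setminus S$. The density statement then follows: $S$ is countable, but its intersection with $\Lambda$ is a subset of $\Lambda$, and we must check it has density zero. For that we use the fact (again almost surely) that $\Lambda\medcap S$ is finite or sparse. More simply, we would show directly that for a.e.\ $\cL$ we have $\Lambda\medcap S=\emptyset$. Indeed, for a fixed $\vs\in\R^d$ and fixed $\vv\in\Z^n$, the set of $(g,\vw)$ with $\pi_{\phys}(g(\vv+\vw))=\vs$ is a proper real-algebraic (codimension $d$) subvariety of $\ASL_n(\R)$, hence Haar-null. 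Taking a countable union over $\vs\in S$ and $\vv\in\Z^n$ gives $\Lambda\medcap S=\emptyset$ a.s. Then $\theta(\Lambda_{\vis,S})=\theta(\Lambda)=\vol(\cW)/\covol(\cL)$ by \cite[Proposition 3.2]{marklof_free_2014}, using that $\covol=1$ almost surely.

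For the visibility statement, fix $\vs\in S$ and distinct $\vv_1,\vv_2\in\Z^n$. A point $\vy=\pi_{\phys}(g(\vv_1+\vw))$ fails to be visible from $\vs$ only if some $\vx=\pi_{\phys}(g(\vv_2+\vw))$ lies on the open segment from $\vs$ to $\vy$. In particular $\vs$, $\vx$, $\vy$ must be collinear in $\R^d$. Write $\vu_i=g(\vv_i+\vw)$. The collinearity condition says that $\pi_{\phys}(\vu_1)-\vs$ and $\pi_{\phys}(\vu_2)-\vs$ are linearly dependent, that is, all $2\times2$ minors of the $d\times 2$ matrix with these columns vanish. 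Since $d\geq2$, this gives at least one nontrivial polynomial equation in the entries of $(g,\vw)$.

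The key step is to check that this polynomial is not identically zero on $\ASL_n(\R)$, which is an irreducible real-algebraic variety (connected Lie group). Its zero set would then be a proper subvariety and hence Haar-null. To see nontriviality, use the translation freedom. Replacing $\vw$ by $\vw+g^{-1}\boldsymbol{t}$ translates both $\vu_1$ and $\vu_2$ by $\boldsymbol{t}$. Choosing $\boldsymbol{t}$ so that $\pi_{\phys}(\vu_1)=\vs+\boldsymbol{e}_1$, the column $\pi_{\phys}(\vu_1)-\vs$ is $\boldsymbol{e}_1$. The second column is then $\boldsymbol{e}_1+\pi_{\phys}(g(\vv_2-\vv_1))$. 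Since $\vv_2-\vv_1\neq\zero$, we can choose $g\in\SL_n(\R)$ so that $\pi_{\phys}(g(\vv_2-\vv_1))=\boldsymbol{e}_2$, and then the two columns are independent. This is where the affine (rather than linear) setting and $d\geq2$ are used. In the linear case with $\vs=\zero$ the analogous argument fails, since $\vv_1,\vv_2$ may be parallel in $\Z^n$.

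Finally, take the countable union of these null sets over $\vs\in S$ and over ordered pairs $\vv_1\neq\vv_2$ in $\Z^n$. For $(g,\vw)$ outside this union, no two distinct points of $\Lambda$ are collinear with any point of $S$ (irrespective of the window). Combined with $\Lambda\medcap S=\emptyset$, this yields $\Lambda_{\vis,S}=\Lambda\setminus S=\Lambda$ almost surely. The measure statement then transfers to $\overline\mu$ by definition of the push-forward.

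The main obstacle is making the nontriviality rigorous, i.e.\ passing from a single good point to Haar-nullity. I will use the standard fact that a real-analytic function on a connected manifold that is not identically zero has a null zero set. This requires only the single explicit point constructed above, and the right-invariance of Haar measure is not needed.
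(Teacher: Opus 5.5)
Your proof is correct, but it takes a different route from the paper's. The paper argues structurally. Since $\overline{\mu}$ is invariant under translations of $\R^d$, and a point $\vx$ is visible from $\vs$ in $\Lambda$ if and only if $\vx-\vs$ is visible from $\zero$ in $\Lambda-\vs$, the statement for a single $\vs$ reduces to the origin case. That case is handled by the argument of \cite[Proposition~3.1]{gringlaz_error_2025}, and countability of $S$ then finishes.

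You instead work directly on $\ASL_n(\R)$ with Haar measure. For each triple $(\vs,\vv_1,\vv_2)$, the collinearity locus lies in the zero set of one $2\times 2$ minor. You show this minor is a nontrivial real-analytic function by exhibiting a single good point, using the translation freedom together with transitivity of $\SL_n(\R)$ on $\R^n\setminus\{\zero\}$. A countable union of these null sets then gives the result.

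What your route buys:
\begin{itemize}
\item It is self-contained.
\item It proves a stronger, window-independent statement: no point of $S$ is collinear with two distinct points of $\pi_{\phys}(\cL)$.
\item It explicitly gives $\Lambda\cap S=\emptyset$ almost surely. This is what actually justifies the ``therefore'' density claim when $S$ is countable and possibly dense, and the paper's proof passes over this point. Note that your independence condition already forces each column $\pi_{\phys}(\vu_i)-\vs$ to be nonzero, so your separate codimension-$d$ argument is redundant, though harmless.
\end{itemize}

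The paper's route is much shorter. It isolates the only two ingredients really used: $\R^d$-translation invariance of $\overline{\mu}$ and the single-point statement at the origin. So it would transfer verbatim to any translation-invariant measure for which the origin statement is known, whereas your computation is tied to the explicit Haar parametrization of $\scrY_n$.
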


\subsection{Remarks}

\begin{enumerate}
\item 
During the preparation of this manuscript, the authors became
aware of recent work by H.~Shamir~\cite{hayim_2026} that generalizes
the density formula \eqref{eq:density_for_origin}. In particular,
Shamir avoids the Hammarhjelm condition and proves the result for every
real quadratic field $K=\bbQ(\sqrt{\fd})$, where $\fd\geq2$ is a
square-free integer.

\item 
For regular cut-and-project sets, we refer the reader to \cite[Theorem 7]{baake_dynamical_2004}, where it is proved that, the associated dynamical system with a translation bounded invariant probability measure, pure point diffraction and pure point dynamical spectrum are equivalent.  For further details on diffraction, see \cite{Baake1,Martin,Hof2,kumar_diffraction_2026} and the references therein.

\item 
Suppose that $\theta(\Lambda_{\vis,S})>0$, it follows from  \Cref{thm_weak_model} and \cite[Theorem 7]{baake_weak_2017} that the diffraction spectrum of $\Lambda_{\vis, S}$ is pure point and translation bounded. We refer the reader to \cite[Corollary 18]{baake_weak_2017} for the relation between pure point diffraction of the weak model sets with maximal density and the spectrum of associated dynamical system.

\item We stated \Cref{thm:generic_density_simul_vis} and \Cref{prop:equality_affine_case} for the measures $\mu$ and $\overline{\mu}$, respectively. However, extending these results with respect to arbitrary RMS  measures remains an open problem.
\end{enumerate}

\subsection*{Acknowledgments} 

The authors thank Barak Weiss, Michael Baake, and Zeev Rudnick for many helpful discussions and comments.
The first author was supported by ISF Grant 2860/24. The second author was supported by ISF Grants 2021/24 and \#674/24.

\section{Preliminaries}

\subsection{Averaging sequences}
\label{subsec:van_Hove_sequences}

Let $G$ be a $\sigma$-compact locally compact abelian group equipped
with a Haar measure $\theta_G$. An \emph{averaging sequence} is a
family $\{A_T\}_{T\in\mathcal I}$ of precompact Borel subsets of
$G$, indexed by an unbounded set
$\mathcal I\subset(0,\infty)$, such that
\begin{enumerate}
\item for  $T<T'$, $A_T \subseteq A_{T'}$,
\item$\lim_{T\to\infty}\theta_G(A_T) = \theta_G(G)$, and
\item $\bigcup_{T \in \mathcal{I}} A_T = G$ up to measure zero.
\end{enumerate}
An averaging sequence is a \emph{van Hove sequence} if $\theta_G(A_T) > 0$ for all sufficiently large $T$, and for every compact set $K \subseteq G$, the $K$-boundary is asymptotically negligible, that is,
\[
\lim_{T\to\infty}
\frac{\theta_G\bigl(\partial^K A_T\bigr)}{\theta_G(A_T)}
=
0,
\]
where
\[
\partial^K A_T
\df
\bigl((K+A_T)\setminus \interior{A}_T\bigr)
\cup
\bigl((-K+\overline{A_T^{\mathsf{c}}})\medcap A_T\bigr)
\]
is the $K$-boundary of $A_T$. 
Here $A_T^{\mathsf{c}}$, $\interior{A_T}$ and $\overline{A_T}$ denote the complement, interior and closure of $A_T \subset G$, respectively, see \cite[p.~125]{moody_uniform_2002}, \cite[p.~145]{schlottmann_cut-and-project_1998}, and \cite[p.746]{huck_pattern_2015}. The existence of van Hove sequences in $\sigma$-compact LCAGs is shown in \cite{schlottmann_cut-and-project_1998}.
In the Euclidean setting, namely $G=\R^d$, standard examples of van Hove sequences
include increasing sequences of Euclidean balls and cubes. More generally,
truncated cones $TA=\{t\vx:\vx\in A,\ t \in[0,T]\}$ of a bounded Jordan set $A\subset\R^d$ with nonempty interior.
We shall use the fact that every bounded convex subset of
$\R^d$ has boundary of Lebesgue measure zero and therefore Jordan measurable \cite{szabo_simple_1997}.

The upper and lower densities of $\Lambda \subset G$ with respect to an averaging sequence $\{A_T\}_{T \in \mathcal{I}}$ are
defined by
\[
\overline{\theta}(\Lambda)
\df
\limsup_{T\to\infty}
\frac{\card{ \Lambda \medcap A_{T}}}{\theta_G(A_T))}
\qquad
\text{and}
\qquad
\underline{\theta}(\Lambda)
\df
\liminf_{T\to\infty}
\frac{\card{ \Lambda \medcap A_{T}}}{\theta_G(A_T))},
\]
respectively.

\subsection{General cut-and-project sets} \label{subsection:cut_and_project_sets}

Let $G$ and $H$ be $\sigma$-compact, locally compact abelian groups (LCAGs) equipped with Haar measures $\theta_G$ and $\theta_H$, respectively. Throughout, we assume that $H$ is compactly generated. By the Structure Theorem for compactly generated locally compact abelian groups, $H$ is topologically isomorphic to $\R^a \times \mathbb{Z}^b \times \mathbb{K}$,
for some integers $a,b \geq 0$ and some compact abelian group $\mathbb{K}$, see \cite[Theorem~9.8]{hewitt_abstract_1979}.

We refer to $G$ as the \emph{physical space} and to $H$ as the \emph{internal space}. Let
$\cL\subset G\times H$ be a lattice, and let $\cW\subset H$ be a measurable
window. 
Let
\[
\pi_{\phys}:G\times H\to G,
\qquad
\pi_{\internal}:G\times H\to H
\]
denote the natural projections. We assume that the restriction $\pi_{\phys}|_{\cL} : \cL \to G$ is injective and that the image $\pi_{\internal}(\cL)$ is dense in $H$.
The associated cut-and-project set is
\[
\Lambda(\cW,\cL)
\df
\{\pi_{\phys}(\vz):\vz\in\cL,\ \pi_{\internal}(\vz)\in\cW\}.
\]
The cut-and-project set $\Lambda(\cW,\cL)$ is called a \emph{model set} if the window $\cW$ is relatively compact and has nonempty interior, 
see \cite{baake_weak_2017} and the references therein.
If, in addition, $\cW = \overline{\interior{\cW}}$ and $\cW$ is compact, then the window is called \emph{proper}.
If $\cW$ is a proper window, and satisfies
\[
\theta_H(\partial \cW)=0,
\]
then the window $\cW$ is called {\it regular} and the cut-and-project set $\Lambda(\cW,\cL)$ is called \emph{irreducible}. 

The density of an irreducible cut-and-project set $\Lambda(\cW,\cL)$ along  a van Hove sequence 
$\{A_n\}_{n \in \mathbb{N}}$ 
exists and is given by
\begin{equation}
\label{eq:density_irreducible_CPS}
\theta(\Lambda(\cW,\cL))
=
\lim_{n\to\infty}
\frac{\card{\Lambda(\cW,\cL)\medcap A_n}}{\theta_G(A_n)}
=
\frac{\theta_H(\cW)}{\covol{(\cL)}},
\end{equation}
see \cite[Theorem~1]{moody_uniform_2002}, \cite{schlottmann_cut-and-project_1998}, \cite[Lemma~3.2, Remark~3.3]{huck_pattern_2015}, \cite[Proposition~3.2]{marklof_free_2014}, and the references therein. Moreover, the density of  irreducible cut-and-project sets is independent of the van Hove sequence, see \cite[Remark 3.3(i)]{huck_pattern_2015}.  Irreducible cut-and-project sets are Delone sets, see \cite[p.~744]{huck_pattern_2015}.

Recall that a set $\mathcal{D} \subset G$ is \emph{uniformly discrete} if there exists an open neighborhood $U $ of the identity in $G$ such that any of its translates $x+U$, with $x \in G$, contains at most one point of $\mathcal{D}$. The set $\mathcal{D} \subset G$ is \emph{relatively dense} if there exists a compact set $K \subset G$ such that $\mathcal{D}+K = G$. A subset $\mathcal{D} \subset G$ is called a \emph{Delone set} if it is both uniformly discrete and relatively dense.

In \Cref{thm:visibility_from_set_S}, the window
$\cW\subset\R^m$ is assumed to be bounded and convex, to have nonempty
interior, and to be centrally symmetric about the origin. The latter
condition means that
\[
\cW=-\cW.
\]

\subsection{Weak model sets of maximal density}\label{sec: Weak_model_set} Let $G$ and $H$ be as in \Cref{subsection:cut_and_project_sets}. A cut-and-project set $\Lambda(\cW,\cL)$ is called a \emph{weak model set} if the window $\cW$ is relatively compact and satisfies
\[
\theta_H(\overline{\cW}) > 0.
\]
Unlike model sets, we do not require the window to have a nonempty interior. Weak model sets are uniformly discrete (see \cite[Proposition 2.4]{huck_pattern_2015}).
Furthermore, if $\cW$ is nowhere dense, then the cut-and-project set $\Lambda(\cW,\cL)$ contains arbitrarily large holes and has the hole-repetitive property, see \cite[Proposition~2.12]{huck_pattern_2015} for the definition and proof of this. 
Consequently, it is not relatively dense.

We say that $\Lambda(\cW,\cL)$ is a \emph{weak model set of maximal density} relative to a given averaging sequence $\mathcal{A}$ if $\Lambda(\cW,\cL)$ is a weak model set, and if the density of $\Lambda(\cW,\cL)$ relative to $\mathcal{A}$ exists and is given by
\[
\theta\bigl(\Lambda(\cW,\cL)\bigr)
=
\frac{\theta_H(\overline{\cW})}{\covol(\cL)}.
\]
We refer the reader to \cite{baake_weak_2017} and the reference therein for further details.

\subsection{Ratner--Marklof--Str{\"o}mbergsson measures}
\label{subsection:RMS_measures}
Let $\SL_n(\R)$ be the group of linear transformations of $\R^n$ that are volume and orientation preserving. Let $\ASL_n(\R) = \SL_n(\R) \ltimes \R^n $ be the group of affine transformations on $\R^n$ that are volume and orientation preserving. Consider the homogeneous spaces
\[
\scrX_n \df \SL_n(\R)/\SL_n(\Z) \quad \text{and} \quad \scrY_n \df \ASL_n(\R)/\ASL_n(\Z)
\]
both equipped with their quotient topologies. These spaces parameterize, respectively, covolume-one lattices and affine lattices in $\R^n$. We write $\haar_{\scrX_n}$ and $\haar_{\scrY_n}$ for the corresponding Haar-Siegel measures. These are the unique probability measures invariant under the actions of $\SL_n(\R)$ and $\ASL_n(\R)$, respectively.

Let $\Cl(\R^n)$ be the collection of closed subsets of $\R^n$. Equipped with the \emph{Chabauty-Fell topology}, $\Cl(\R^n)$ is compact, see \cite[\S~2.2]{ruhr_classification_2024} and the references therein.
After identifying  $\scrX_n$ and $\scrY_n$ as subsets of $\Cl(\R^n)$, the restriction of the Chabauty-Fell topology to these, coincides with their quotient topology. Therefore, lattices and affine lattices satisfy natural inclusions
$$\scrX_n \subset \scrY_n \subset \Cl(\R^n)$$
as topological subspaces.

The natural actions of $\ASL_d(\R)$ and $\SL_d(\R)$ on $\mathbb{R}^d$ induce corresponding actions on $\Cl(\R^d)$. A probability measure on $\Cl(\R^d)$ is called a \emph{Ratner-Marklof-Str{\"o}mbergsson linear measure}, or an \emph{RMS linear measure} for short, if it is invariant and ergodic with respect to the $\SL_d(\R)$-action and gives full measure to the set of irreducible cut-and-project sets.

Identifying the group $\SL_d(\R)$ as  a subgroup of $\ASL_d(\R)$ in the canonical way, an RMS linear measure does not need to be invariant under the $\ASL_d(\R)$ action. When this additional invariance property  holds, the measure is called an \emph{RMS affine measure}.

Fix the decomposition in \eqref{eq:direct_sum_decomposition1}, the associated projections $\pi_{\phys}$ and $\pi_{\internal}$, a
bounded measurable window $\cW\subset\R^m$   with 
nonempty interior and
boundary of Lebesgue measure zero. 
Define
\[
\Psi_{\cW}:\scrY_n\longrightarrow\Cl(\R^d),
\qquad
\Psi_{\cW}(\cL)
\df
\Lambda(\cW,\cL).
\]
The map $\Psi_{\cW}$ is Borel measurable (\cite[Proposition 2.1]{ruhr_classification_2024}), and hence we
may define the probability measures
\[
\mu
\df
(\Psi_{\cW})_*\haar_{\scrX_n},
\qquad
\overline{\mu}
\df
(\Psi_{\cW})_*\haar_{\scrY_n}
\]
on $\Cl(\R^d)$. It was proved that the measures $\mu$ and $\overline{\mu}$ are RMS linear and affine measures, respectively, 
see \cite[Theorem 4.1]{ruhr_classification_2024}.
We refer the reader to \cite{marklof_free_2014, ruhr_classification_2024} for further details on RMS measures. 

For a lattice $\cL\subset\R^d\oplus\R^m$ and a pair of linearly
independent vectors $\vv_1,\vv_2\in\cL$, define
\begin{equation*}
% \label{eq:condition_on_lattice}
\dim_{\cL}(\vv_1,\vv_2)
\df
\dim\left(
\operatorname{span}_{\R}\{\vv_1,\vv_2\}
\medcap
(\{\zero\}\times\R^m)
\right).
\end{equation*}
This quantity can be calculated for any given
lattice. Moreover, for almost every $g \in \GL_n(\R)$, the lattice
$\cL=g\Z^n$ and every pair of linearly independent vectors satisfies $\dim_{\cL}(v_1,v_2)=0$,
see
the proof of \cite[Proposition~3.3]{gringlaz_error_2025}.
Thus, any statement proved under condition
\eqref{eq:condition_on_lattice} applies to
$\mu$-almost every irreducible cut-and-project set.

\subsection{Visibility from a point in a cut-and-project set}

Let $K=\bbQ(\sqrt \fd)$ be a real quadratic field with ring of integers
$\cO_K$ satisfying the Hammarhjelm condition. For two distinct points $\vx=(x_1,\dots,x_k)$ and $\vy=(y_1, \dots, y_k)$ in $\mathcal{O}_K^k$, we define
\[
\gcd(\vx -\vy )=\gcd\big((x_1-y_1),\ldots,(x_k-y_k)\big),
\]
where $(x_i-y_i)$ denotes the ideal generated by $x_i-y_i$. Since $\mathcal{O}_K$ is a principal ideal domain, every such ideal is principal. By abuse of notation, we write $\gcd(\vx-\vy)$ for a generator of this ideal. In particular, if 
$$\gcd(\vx-\vy)= \mathcal{O}_K,$$
we simply write
$$\gcd(\vx-\vy)= 1.$$
We also define
$$\gcd(\zero)\df0.$$

Let $\cW\subset\R^d$ be a bounded, 
convex, centrally symmetric window with nonempty interior, and let $\cL$ be the lattice defined in
\eqref{eq:def_lattice}. By \cite[Lemma 5.6]{gringlaz_error_2025},  
\begin{equation*} 
\Lambda_{\vis,\{\zero\}}(\cW,\cL)
= \left\{ \vx  \in \Lambda(\cW, \cL)\setminus\{\zero\} :
\gcd(\vx) = 1, \; \sigma(\vx) \notin \frac{1}{\lambda}\cW \right\}.
\end{equation*}
The following \namecref{lemma:visibility_from_a_point_s} extends this characterization to visibility from an arbitrary point $\vs\in \Lambda(\cW,\cL)$ satisfying a suitable geometric condition.
\begin{lemma} \label{lemma:visibility_from_a_point_s}
Under the hypotheses of Theorem \ref{thm:visibility_from_set_S}, 
let $\vs\in\Lambda$ satisfy the geometric condition \eqref{eq:geometric_condition}.
Then the set of points visible from $\vs$ is given by
\begin{equation} \label{eq:visible_from_point_s}
\Lambda_{\vis,\{\vs\}}
=
\left\{
\vx\in\Lambda:
\gcd(\vx-\vs)=1,\
\sigma\!\left(\frac{\vx-\vs}{\lambda}\right)\notin\cW-\sigma(\vs)
\right\}.
\end{equation}
\end{lemma}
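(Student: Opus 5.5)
Throughout, identify $\Lambda$ with $\{\vx\in\cO_K^d:\sigma(\vx)\in\cW\}$; this is legitimate because $\pi_{\phys}|_{\cL}$ is injective. Put $C\df\cW-\sigma(\vs)$. This is a convex set containing $\zero$, since $\vs\in\Lambda$. The plan rests on one observation and the strengthened Hammarhjelm condition of \Cref{lemma:lower_bound_on_conjugates_of_primes}.

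\textbf{Parametrising the obstructions.} Fix $\vx\in\Lambda\setminus\{\vs\}$ and write $\vx-\vs=g\vv$, with $g=\gcd(\vx-\vs)$ and $\vv\in\cO_K^d$ primitive. Since $\cO_K$ is a PID and $\vv$ is primitive, an element $a\in K$ satisfies $a\vv\in\cO_K^d$ if and only if $a\in\cO_K$. Hence the points of $\cO_K^d$ in the open segment from $\vs$ to $\vx$ are exactly
\[
\vs+a\vv,\qquad a\in\cO_K,\quad 0<a/g<1 .
\]
Such a point lies in $\Lambda$ if and only if $(\sigma(a)/\sigma(g))\,\sigma(\vx-\vs)\in C$. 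I will therefore study the set
\[
R\df\{r\in\R:\ r\,\sigma(\vx-\vs)\in C\}.
\]
By convexity $R$ is an interval containing $[0,1]$. By \eqref{eq:geometric_condition}, $r\in R$ implies $-r\lambda/(\lambda+1)\in R$. With this notation, $\vx$ is visible from $\vs$ if and only if no $a\in\cO_K$ satisfies $0<a/g<1$ and $\sigma(a/g)\in R$.

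\textbf{Necessity.} First I show that if either condition in \eqref{eq:visible_from_point_s} fails, then $\vx$ is not visible from $\vs$.

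If $\sigma((\vx-\vs)/\lambda)\in C$, then $\vs+(\vx-\vs)/\lambda\in\cO_K^d$ because $\lambda$ is a unit. This point lies in $\Lambda$, and $1/\lambda\in(0,1)$, so it blocks $\vx$.

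If instead $g$ is not a unit, take a prime divisor $\pi$ of $g$. Multiply $\pi$ by a suitable unit $\pm\lambda^k$ so that $\pi\in(1,\lambda)$; the value $\lambda^k$ itself is excluded because $\pi$ is not a unit. By \Cref{lemma:lower_bound_on_conjugates_of_primes} we have $|\sigma(\pi)|>1+1/\lambda$. Take $a=g/\pi$. Then $a/g=1/\pi\in(0,1)$ and $\sigma(a/g)\in(-\lambda/(\lambda+1),\lambda/(\lambda+1))$. This interval is contained in $R$, since $R\supseteq[0,1]$ and $R$ is stable under the reflection described above. So $\vs+(\vx-\vs)/\pi$ blocks $\vx$.

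\textbf{Sufficiency.} Now assume $g=1$ and $\sigma(1/\lambda)\notin R$, and suppose some $a\in\cO_K$ with $0<a<1$ has $\sigma(a)\in R$. Write $a=\lambda^{-k}b$ with $k\ge1$ and $b\in[1,\lambda)$, with $b\in\cO_K$ after absorbing the sign.

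If $b\in(1,\lambda)$, the strengthened lemma gives $|\sigma(a)|>\lambda^k(1+1/\lambda)\ge\lambda+1$.

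If $b=1$, then $\sigma(a)=N(\lambda)^k\lambda^k$. When $k=1$ this equals $\sigma(1/\lambda)$, which contradicts the assumption immediately. When $k\ge2$ we get $|\sigma(a)|\ge\lambda^2\ge\lambda+1$, using $\lambda\ge(1+\sqrt5)/2$.

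In the remaining cases $R$ contains a number $r_0$ with $|r_0|\ge\lambda+1$. Reflecting, $R$ also contains $-r_0\lambda/(\lambda+1)$, whose modulus is at least $\lambda$. Since $R$ is an interval containing $0$, it then contains $[-\lambda,\lambda]$. In particular $R\ni\pm\lambda$, and $\sigma(1/\lambda)=\pm\lambda$, which is a contradiction. Hence $\vx$ is visible from $\vs$.

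\textbf{Main obstacle.} I expect the hardest part to be this sign and magnitude bookkeeping in the sufficiency direction. The reflection property of $R$ derived from \eqref{eq:geometric_condition} is exactly what is needed to recover both signs of $\sigma(1/\lambda)$. Likewise, the constant $1+1/\lambda$ in \Cref{lemma:lower_bound_on_conjugates_of_primes}, rather than the $1$ of the original Hammarhjelm condition, is precisely what makes the interval reach $\pm\lambda$.
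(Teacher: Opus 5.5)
Your argument is correct in outline. The necessity half is the paper's argument; your interval $R\supseteq[0,1]$ and the reflection $r\mapsto -r\lambda/(\lambda+1)$ just package its two sign cases. The reverse inclusion takes a genuinely different route.

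\textbf{How the two routes differ.} The paper takes the blocking point $\vy=\vs+t(\vx-\vs)$ closest to $\vs$. That point is visible, so the already-proved inclusion gives $\gcd(\vy-\vs)=1$. This forces $t^{-1}\in\cO_K$, and then $\gcd(\vx-\vs)=1$ forces $t=\lambda^{-k}$. A parity split on $k$, using $\sigma(\lambda)=-1/\lambda$ and $1+1/\lambda\le\lambda$, finishes the proof. You instead rule out every potential blocker $a\in\cO_K\cap(0,1)$ at once. Any $a\neq\lambda^{-1}$ has $|\sigma(a)|\ge\lambda+1$, and reflection then forces $R\supseteq[-\lambda,\lambda]\ni\sigma(1/\lambda)$.

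This buys a uniform argument with no minimality or recursion step, and it does not use the sign of $N(\lambda)$. The price is that you need a lower bound on $|\sigma(b)|$ for \emph{every} $b\in\cO_K\cap(1,\lambda)$. \Cref{lemma:lower_bound_on_conjugates_of_primes} only covers primes $\pi\in\bbP$. The paper never needs more, because its reduction leaves only units.

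\textbf{Two repairs are needed.}

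First, your inequalities are not strict. The lemma gives $|\sigma(\pi)|\ge 1+1/\lambda$, and equality is attained: $\pi=\sqrt2$ for $\fd=2$, and $\pi=\lambda-1$ when $\fd=x^2+4$. This is harmless. In the necessity part, $1/\sigma(\pi)$ lies in the closed interval $[-\lambda/(\lambda+1),\lambda/(\lambda+1)]$. That interval is still inside $R$, since $1\in R$ reflects to $-\lambda/(\lambda+1)\in R$. In the sufficiency part you only use $|\sigma(a)|\ge\lambda+1$.

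Second, in the case $b\in(1,\lambda)$, $b$ need not be prime (e.g.\ $b=2$ when $\fd=2$), so citing the lemma there needs one more line. Factor $b=\lambda^{j}\pi_1\cdots\pi_r$ with $r\ge1$ and each $\pi_i\in\bbP$. This uses unique factorization and your normalization of primes; the leftover unit is positive because $b$ and the $\pi_i$ are. Since $\pi_1\cdots\pi_r>1$ and $b<\lambda$, we get $\lambda^j<\lambda$, so $j\le 0$. Hence $|\sigma(\lambda^j)|=\lambda^{-j}\ge1$, and
$|\sigma(b)|\ge(1+1/\lambda)^r\ge 1+1/\lambda$.
With these two fixes your proof is complete.
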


\section{Proof of \texorpdfstring{\Cref{lemma:visibility_from_a_point_s}}{}}
\label{section:proof_of_first_big_lemma}

Let $K=\bbQ(\sqrt \fd)$ be a real quadratic field satisfying the Hammarhjelm
condition, and let $\lambda>1$ be its fundamental unit. 
Rudnick \cite{rudnick_classification_2026} shows that 
\[
\fd \in
\{2,5,13,29,53,173,293\}.
\]
Observe that if
\[
\fd
\in
\{13,29,53,173,293\},
\]
then
\[
\fd=x^2+4,
\qquad
x\in\{3,5,7,13,17\}.
\]
Writing $x=2m+1$, we have
\[
\cO_K=\Z[\omega],
\qquad
\omega=\frac{1+\sqrt \fd}{2},
\]
and
\[
\lambda=\frac{x+\sqrt \fd}{2}=m+\omega.
\]
In particular,
\[
N(\lambda)=-1,
\qquad
\sigma(\lambda)=-\frac{1}{\lambda},
\]
and
\[
\frac{1}{\lambda}=\frac{\sqrt \fd-x}{2},
\qquad
1+\frac{1}{\lambda}=\frac{\sqrt \fd -x+2}{2}.
\]

\begin{lemma}
\label{lemma:inequality_for_fundamental_unit}
Let $K=\bbQ(\sqrt \fd)$ be a real quadratic field and let $\lambda$ be its fundamental unit. Then
\begin{equation}\label{eq: condition_on_lambda}
1+\frac{1}{\lambda}\leq \lambda.    
\end{equation}
Equality occurs only for $K=\bbQ(\sqrt5)$.
\end{lemma}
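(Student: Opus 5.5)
The inequality $1+\frac1\lambda\le\lambda$ is equivalent, since $\lambda>1>0$, to $\lambda^2-\lambda-1\ge 0$. The positive root of $t^2-t-1$ is the golden ratio $\phi=\frac{1+\sqrt5}{2}$. So the plan is to show that every fundamental unit of a real quadratic field satisfies $\lambda\ge\phi$, with equality only for $K=\bbQ(\sqrt5)$.

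The first step is to record a general lower bound for units $\lambda>1$ in $\cO_K$. Write $\lambda=\frac{a+b\sqrt\fd}{2}$ with $a,b\in\Z$ and $a\equiv b \pmod 2$; this covers both the case $\cO_K=\Z[\sqrt\fd]$ and the case $\cO_K=\Z[\omega]$. We have $\sigma(\lambda)=\frac{a-b\sqrt\fd}{2}=\pm\frac1\lambda$, so $|\sigma(\lambda)|<1<\lambda$. From this,
\[
a=\lambda+\sigma(\lambda)>\lambda-1>0
\qquad\text{and}\qquad
b\sqrt\fd=\lambda-\sigma(\lambda)>\lambda-1>0,
\]
hence $a\ge1$ and $b\ge1$.

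The second step uses $\fd\ge2$. It gives
\[
\lambda=\frac{a+b\sqrt\fd}{2}\ge\frac{1+\sqrt\fd}{2}.
\]
If $\fd\ge 6$, then $\sqrt\fd>\sqrt5$, so $\lambda>\phi$ strictly. The remaining square-free values are $\fd=2,3,5$, which I would check directly:
\begin{itemize}
\item For $\fd=2$, $\lambda=1+\sqrt2\approx2.414>\phi$.
\item For $\fd=3$, $\lambda=2+\sqrt3>\phi$.
\item For $\fd=5$, $\lambda=\phi$, and equality holds.
\end{itemize}
Alternatively, for $\fd=2,3$ one can note that $a\equiv b\equiv 0\pmod 2$ is forced there, since $\fd\not\equiv1\pmod 4$. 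Then $\lambda\ge 1+\sqrt\fd>\phi$ without consulting tables.

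For the equality clause, suppose $\lambda=\phi$. Then $\lambda^2-\lambda-1=0$, so the field is $K=\bbQ(\lambda)=\bbQ(\sqrt5)$. Conversely, for $\bbQ(\sqrt5)$ the fundamental unit is $\phi$, which satisfies equality.

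The only mild obstacle is justifying $a,b\ge1$ uniformly, i.e. ruling out a unit $\lambda>1$ with $b\le 0$. The conjugate estimate $|\sigma(\lambda)|<1$ settles this.
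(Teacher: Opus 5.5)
Your proof is correct and follows the same strategy as the paper: reduce to $\lambda\ge\frac{1+\sqrt{5}}{2}$, show that both coordinates of $\lambda$ in the integral basis are positive, and conclude $\lambda\ge\frac{1+\sqrt{\fd}}{2}$, or $\lambda\ge 1+\sqrt{\fd}$ when $\fd\equiv 2,3 \pmod 4$. The one difference is how positivity is obtained: your identities $a=\lambda+\sigma(\lambda)$ and $b\sqrt{\fd}=\lambda-\sigma(\lambda)$, combined with $|\sigma(\lambda)|<1$, treat both residue classes of $\fd$ at once and give a self-contained argument, whereas the paper does a sign case analysis for $\Z[\sqrt{\fd}]$ and cites Rudnick's Lemma~2.1 for $\Z[\omega]$.
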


\begin{proof}
It can easily be seen that the condition \eqref{eq: condition_on_lambda} is equivalent to 
$$\lambda \geq \frac{1+\sqrt{5}}{2}.$$
\medskip
\noindent \textbf{Case 1:} $\fd=2,3\, {\rm mod}\, 4$.

In this case, the ring of integers is $\mathbb{Z}[\sqrt{\fd}]$. We claim that the fundamental unit $\lambda= m+ n\sqrt{\fd}$ for some $n,m\in \mathbb{N}$. In fact, if $n=0$, then $N(\lambda)= m^2=\pm1$, so $m=\pm 1$, which is contradiction of $\lambda>1$. Now if $n<0$, then $\sigma(\lambda)= m-n\sqrt{\fd}>m+n\sqrt{\fd}= \lambda>1$, which contradicts $|N(\lambda)|= \lambda\sigma(\lambda)=1$.

Now, if $m=0$ then we have $N(\lambda)= -n^2\fd\neq \pm 1$. So we must have $m\neq 0$. Suppose that $m<0$, then $\sigma(\lambda)= m-n\sqrt{\fd}<0$. Therefore
\begin{equation*}
\begin{split}
 \lambda &= \frac{1}{|\sigma(\lambda)|}= \frac{1}{n\sqrt{\fd}-m} \leq \frac{1}{n\sqrt{\fd}}  <1,
\end{split}    
\end{equation*}
which contradicts $\lambda>1$.

Since $n,m\geq 1$, we have
$$\lambda= m+n\sqrt{\fd}\geq 1+\sqrt{\fd}\geq 1+\sqrt{2}>\frac{1+\sqrt{5}}{2}$$

\medskip
\noindent \textbf{Case 2:} $\fd=1\, {\rm mod}\, 4$.

In this case the ring of integers is $\mathbb{Z}[\omega]$, where $\omega =(1+\sqrt{\fd})/2$. By \cite[Lemma 2.1]{rudnick_classification_2026}, we have
$\lambda= m+ n\omega$ with $m\geq 0$, $n\geq 1$.
Thus
\begin{equation*}
 \lambda\geq 0+ \omega= (1+\sqrt{\fd})/2\geq \frac{1+\sqrt{5}}{2},   
\end{equation*}
because $\fd>1$, and equality if and only if $\fd=5$.
\end{proof}

Denote
\[
\bbP
\df
\{\pi\in\cO_K:\pi \textup{ is prime and } 1<\pi<\lambda\}.
\]
The set $\bbP$ plays the role of a fundamental domain for the action of
the unit group on prime elements of $\cO_K$: after multiplying by a unit, and
possibly by $-1$, every prime element has a representative in $\bbP$.

It follows from \cite[Lemma~4.7]{hammarhjelm_density_2022} that, if $\fd = 2$,
then
\[
|\sigma(\pi)|
\geq
1+\frac{1}{\lambda}
\qquad
\text{for all }
\pi
\in
\bbP,
\]
and if $\fd = 5$, then \cite[Lemma~4.13]{hammarhjelm_density_2022} gives the stronger
bound
\[
|\sigma(\pi)|\geq 2\lambda
\qquad
\text{for all }\pi\in\bbP.
\]

\begin{lemma} \label{lemma:lower_bound_on_conjugates_of_primes}
Let $K=\bbQ(\sqrt \fd)$ be a real quadratic field satisfying the Hammarhjelm
condition. Then, for every $\pi\in\bbP$,
\[
|\sigma(\pi)|\geq 1+\frac{1}{\lambda}.
\]
Moreover, for $\fd \in
\{2,13,29,53,173,293\}$, this bound is sharp.
\end{lemma}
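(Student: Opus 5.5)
The plan is a case split on $\fd$ followed by a short lattice-point enumeration. The key observation is that the inequality $|\sigma(\pi)|<1+\frac{1}{\lambda}$, combined with $1<\pi<\lambda$, confines $f(\pi)$ to a small box. For $\fd=2$ the bound is exactly \cite[Lemma~4.7]{hammarhjelm_density_2022}. For $\fd=5$, \cite[Lemma~4.13]{hammarhjelm_density_2022} gives $|\sigma(\pi)|\geq 2\lambda$. Since $\lambda>1$ we have $2\lambda\geq\lambda+1>1+\frac1\lambda$, so the bound follows. By Rudnick's classification it remains to treat $\fd\in\{13,29,53,173,293\}$, where $\fd=x^2+4$ and $\lambda=m+\omega$ with $x=2m+1$. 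In these fields $\sigma(\lambda)=-\frac1\lambda$, and hence $\lambda+\frac1\lambda=\lambda-\sigma(\lambda)=\sqrt\fd$.

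For these five fields I will argue by contradiction. Suppose $\alpha\in\cO_K$ satisfies $1<\alpha<\lambda$ and $|\sigma(\alpha)|<1+\frac1\lambda$; primality is not used. Write $\alpha=u+v\omega$ with $u,v\in\Z$. Then $\alpha-\sigma(\alpha)=v\sqrt\fd$, so
\[
|v|\sqrt\fd<\lambda+1+\tfrac1\lambda=\sqrt\fd+1 .
\]
Since $\sqrt\fd>1$, this forces $|v|\leq 1$.
\begin{itemize}
\item If $v=0$, then $\alpha=u$ is a rational integer with $u=\alpha>1$, so $u\geq 2$. But $|\sigma(\alpha)|=u<1+\frac1\lambda<2$, which is impossible.
\item If $v=\pm1$, then $\alpha+\sigma(\alpha)=2u\pm1$ is an odd integer, and the two constraints $\alpha\in(1,\lambda)$ and $\sigma(\alpha)\in\bigl(-1-\frac1\lambda,1+\frac1\lambda\bigr)$ leave only finitely many admissible $u$. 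Using $\frac1\lambda=\frac{\sqrt\fd-x}{2}$, I expect every admissible $(u,v)$ to give $|\sigma(\alpha)|\leq 1$. Such $\alpha$ would place $f(\alpha)$ in $(1,\lambda)\times[-1,1]$, which the Hammarhjelm condition \eqref{eq: def_of_H_condition} forbids, so no admissible $\alpha$ exists at all.
\end{itemize}

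The main obstacle is this $v=\pm1$ bookkeeping. The margin is thin: the candidate $\alpha=\lambda-1=(m-1)+\omega$ lands exactly on the boundary value $|\sigma(\alpha)|=1+\frac1\lambda$. The strict inequalities must therefore be tracked carefully; the safe way is to write everything in terms of $\sqrt\fd$ and $x$ and check the finitely many integer inequalities directly. A cruder but sufficient fallback is to bound the relevant $u$ explicitly for each of the five values of $x\in\{3,5,7,13,17\}$ and enumerate.

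For sharpness, take $\pi=\lambda-1$. Then $\sigma(\pi)=-\frac1\lambda-1$, so $|\sigma(\pi)|=1+\frac1\lambda$, and
\[
N(\pi)=-(\lambda-1)\Bigl(1+\tfrac1\lambda\Bigr)=-\Bigl(\lambda-\tfrac1\lambda\Bigr)=-x ,
\]
using $\lambda-\frac1\lambda=\lambda+\sigma(\lambda)=x$. Since $x\in\{3,5,7,13,17\}$ is a rational prime, $\pi$ is prime in $\cO_K$, and $1<\pi<\lambda$ holds because $\lambda>2$, so $\pi\in\bbP$. For $\fd=2$ take $\pi=\sqrt2=\lambda-1$. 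Here $|\sigma(\pi)|=\sqrt2=1+\frac1\lambda$ (as $\frac1\lambda=\sqrt2-1$) and $N(\pi)=-2$, so again $\pi\in\bbP$ attains the bound.
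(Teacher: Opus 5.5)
Your reduction is the paper's proof in all essentials. You use the same citations of Hammarhjelm's Lemmas 4.7 and 4.13 for $\fd=2,5$ and the same coordinates $\alpha=u+v\omega$ with $\alpha-\sigma(\alpha)=v\sqrt{\fd}$. You also use the same sharpness witness $\lambda-1$; your computation $N(\lambda-1)=-x$ and the witness $\sqrt{2}$ for $\fd=2$ are correct. The bound $|v|\le 1$ and the case $v=0$ are also fine.

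The gap is the case $v=\pm1$, which you leave as an expectation (``I expect every admissible $(u,v)$ to give $|\sigma(\alpha)|\le 1$''). This is where the lemma has its real content. The Hammarhjelm condition says nothing about the range $1<|\sigma(\alpha)|<1+\frac{1}{\lambda}$, so something must rule that range out, and your proposal does not. The case closes in two lines, and in fact there is nothing left for the Hammarhjelm condition to exclude:
\begin{itemize}
\item If $v=-1$, then $\sigma(\alpha)=\alpha+\sqrt{\fd}>1+\sqrt{\fd}$.
\item If $v=1$, use $\omega=\lambda-m$ and $\sigma(\omega)=-m-\frac{1}{\lambda}$. Then $\alpha=(u-m)+\lambda$ and $\sigma(\alpha)=(u-m)-\frac{1}{\lambda}$. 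The condition $1<\alpha<\lambda$ forces $u-m\le -1$, hence $\sigma(\alpha)\le -1-\frac{1}{\lambda}$. This is exactly the paper's case $b=1$.
\end{itemize}

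Your parity observation also finishes the $v=1$ case. From $\alpha<\lambda$ you get $\sigma(\alpha)=\alpha-\sqrt{\fd}<-\frac{1}{\lambda}$, and the assumption gives $\sigma(\alpha)>-1-\frac{1}{\lambda}$. Using $\frac{2}{\lambda}=\sqrt{\fd}-x$, the trace $2\sigma(\alpha)+\sqrt{\fd}$ then lies in the open interval $(x-2,x)$. The only integer there is $x-1$, which is even, contradicting $\alpha+\sigma(\alpha)=2u+1$.

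So the set of admissible $\alpha$ with $v=\pm1$ is empty. Your final appeal to the Hammarhjelm condition is vacuous; that condition enters only through Rudnick's classification, which gives $\fd=x^2+4$. Once you replace ``I expect'' with this computation, the proof is complete and coincides with the paper's.
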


\begin{proof}
By \cite[Lemmas~4.7 and~4.13]{hammarhjelm_density_2022}, the desired lower
bound holds for $\fd = 2,5$. Moreover, for $\fd = 2$, the prime element
$\pi=\sqrt2$ satisfies
\[
1<\sqrt2<\lambda,
\qquad
|\sigma(\sqrt2)|=\sqrt2=1+\frac{1}{\lambda},
\]
so the bound is sharp in this case. It remains to treat
\[
\fd \in \{13,29,53,173,293\}.
\]

As discussed at the beginning of \Cref{section:proof_of_first_big_lemma}, there is
$x\in\{3,5,7,13,17\}$ such that
\[
\fd = x^2+4.
\]
Writing $x=2m+1$, we have
\[
\cO_K=\Z[\omega],
\qquad
\omega=\frac{1+\sqrt \fd}{2},
\qquad
\lambda=m+\omega.
\]
Also,
\[
\sigma(\lambda)=-\frac{1}{\lambda},
\qquad
\sigma(\omega)=-m-\frac{1}{\lambda}.
\]

Let $\pi\in\bbP$. There exist unique integers $a,b\in\Z$ such that
\[
\pi=a+b\omega.
\]
Since $\omega-\sigma(\omega)=\sqrt\fd$, we have
\begin{equation} \label{eq:conjugate-pi}
\sigma(\pi)=\pi-b\sqrt \fd.
\end{equation}

We divide the proof into cases according to $b$.

\begin{itemize}
\item If $b\leq -1$, then by \eqref{eq:conjugate-pi},
\[
\sigma(\pi)=\pi+|b|\sqrt \fd>1+\sqrt \fd>\sqrt{\fd}.
\]
Since $1+\lambda^{-1}<2<\sqrt \fd$, it follows that
\[
|\sigma(\pi)|>1+\frac{1}{\lambda}.
\]

\item If $b=0$, then $\pi=a\in\Z$. Since $\pi>1$, we have $a\geq 2$, and hence
\[
|\sigma(\pi)|=a\geq 2>1+\frac{1}{\lambda}.
\]

\item If $b\geq 2$, then by \eqref{eq:conjugate-pi},
\[
\sigma(\pi)=\pi-b\sqrt \fd\leq \pi-2\sqrt \fd.
\]
Since $\fd=x^2+4$, we have $x<\sqrt \fd$, and therefore
\[
\lambda=\frac{x+\sqrt \fd}{2}<\sqrt \fd.
\]
As $\pi<\lambda$, we get
\[
\sigma(\pi)<\sqrt \fd-2\sqrt \fd=-\sqrt \fd.
\]
Since $\sqrt \fd>2>1+\lambda^{-1}$, it follows that
\[
|\sigma(\pi)|>1+\frac{1}{\lambda}.
\]

\item If $b=1$, then $\pi=a+\omega$. The inequality
$\pi<\lambda=m+\omega$ gives $a\leq m-1$. Hence
\[
\sigma(\pi)
=
a+\sigma(\omega)
=
a-m-\frac{1}{\lambda}
\leq
-1-\frac{1}{\lambda}.
\]
Thus
\[
|\sigma(\pi)|\geq 1+\frac{1}{\lambda}.
\]
\end{itemize}

It remains to prove sharpness for
\[
\fd\in\{13,29,53,173,293\}.
\]
Set
\[
\pi_0=\lambda-1.
\]
Since $\lambda>x\geq 3$, we have
\[
1<\pi_0<\lambda.
\]
Moreover,
\[
|\sigma(\pi_0)|
=
|\sigma(\lambda)-1|
=
\left|-\frac{1}{\lambda}-1\right|
=
1+\frac{1}{\lambda}.
\]
Finally,
\[
N(\lambda-1)=-x.
\]
Since $x\in\{3,5,7,13,17\}$ is a rational prime, the ideal $(\lambda-1)$ has
prime norm. Hence $(\lambda-1)$ is a prime ideal, so $\lambda-1$ is a prime
element. Therefore $\pi_0\in \bbP$, and the bound is sharp.
\end{proof}

\begin{proof}[Proof of \Cref{lemma:visibility_from_a_point_s}]

We first prove the inclusion $\subseteq$ in
\eqref{eq:visible_from_point_s}. Let $\vx\in\Lambda_{\vis,\{\vs\}}$. 
We
show that
\[
\gcd(\vx-\vs)=1
\qquad\text{and}\qquad
\sigma\!\left(\frac{\vx-\vs}{\lambda}\right)\notin \cW-\sigma(\vs).
\]

Suppose first that
\[
\sigma\!\left(\frac{\vx-\vs}{\lambda}\right)\in \cW-\sigma(\vs).
\]
Then
\[
\vy'=\vs+\frac{\vx-\vs}{\lambda}
\]
belongs to $\Lambda$, and since $\lambda>1$, the point $\vy'$ lies in
the interior of the line segment joining $\vs$ and $\vx$. This contradicts
the visibility of $\vx$ from $\vs$.

Next, suppose that $\gcd(\vx-\vs)\neq 1$. Then there exists a prime element
$\pi\in\cO_K$ dividing $\vx-\vs$. After multiplying $\pi$ by a unit, and
possibly by $-1$, we may assume that $\pi\in\bbP$. Set
\[
\vy=\vs+\frac{\vx-\vs}{\pi}.
\]
Then $\vy\in\cO_K^d$, and since $1<\pi<\lambda$, the point $\vy$ lies in
the interior of the line segment joining $\vs$ and $\vx$. 
It remains to
show that 
$\vy\in\Lambda(\cW,\cL)$. We have
\[
\sigma(\vy)
=
\sigma(\vs)+\sigma(\pi)^{-1}\sigma(\vx-\vs),
\]
so it suffices to prove that $\sigma(\vy)\in\cW$.

\medskip
\noindent\textbf{Case 1.} 
Assume $\sigma(\pi) > 0$.

By \Cref{lemma:lower_bound_on_conjugates_of_primes},
\[
\sigma(\pi)\geq 1+\frac{1}{\lambda},
\]
and hence $0<\sigma(\pi)^{-1}<1$. Thus $\sigma(\vy)$ is a convex
combination of $\sigma(\vs)$ and $\sigma(\vx)$. Since
$\sigma(\vs),\sigma(\vx)\in\cW$, the convexity of $\cW$ implies that
$\sigma(\vy)\in\cW$.

\medskip
\noindent\textbf{Case 2.} Assume $\sigma(\pi)<0$.

By \Cref{lemma:lower_bound_on_conjugates_of_primes}, $\sigma(\pi)\le -1-\frac{1}{\lambda}$.
Set
\[
\vv=\sigma(\vx)-\sigma(\vs),
\qquad
c=\sigma(\pi)^{-1}\in\left[-\frac{\lambda}{1+\lambda},0\right).
\]
Then
\[
\sigma(\vy)-\sigma(\vs)=c\vv.
\]
Since $\sigma(\vx),\sigma(\vs)\in\cW$, we have
\[
\zero,\vv\in \cW-\sigma(\vs).
\]
By the geometric condition \eqref{eq:geometric_condition},
\[
-(\cW-\sigma(\vs))
\subset
\left(1+\frac{1}{\lambda}\right)(\cW-\sigma(\vs)).
\]
Hence
\[
-\frac{\vv}{1+\lambda^{-1}}\in \cW-\sigma(\vs).
\]
Let
\[
t=-(1+\lambda^{-1})c\in(0,1].
\]
Since $\cW-\sigma(\vs)$ is convex and contains $\zero$, we get
\[
c\vv
=
t\left(-\frac{\vv}{1+\lambda^{-1}}\right)
+(1-t)\zero
\in
\cW-\sigma(\vs).
\]
This implies that $\sigma(\vy)=c\vv+ \sigma(\vs) \in \cW,$
a contradiction to the visibility of $\vx$
from $\vs$. This proves
\begin{equation}
\label{eq:visible_from_point_s_II}
\Lambda_{\vis,\{\vs\}}
\subseteq
\left\{
\vx\in\Lambda:
\gcd(\vx-\vs)=1,\
\sigma\!\left(\frac{\vx-\vs}{\lambda}\right)\notin \cW-\sigma(\vs)
\right\}.
\end{equation}

We now prove the reverse inclusion $\supseteq$ in \eqref{eq:visible_from_point_s}.
Suppose, toward a contradiction, that
\[
\vx\in\Lambda\setminus\Lambda_{\vis,\{\vs\}},
\qquad
\gcd(\vx-\vs)=1,
\qquad
\sigma\!\left(\frac{\vx-\vs}{\lambda}\right)\notin \cW-\sigma(\vs).
\]

Then there exists a point $\vy \in \Lambda$ that is also in the interior of the straight line segment between $\vs$ and $\vx$.
Write $\vy = \vs + t(\vx - \vs)$ with $t \in (0,1)  \medcap K$. Since $\Lambda$ is locally finite, we may choose $t$ minimal such that $\vy \in \Lambda_{\vis,\{\vs\}}$.

Since $t \in K$, we may write $t = b/a$ with $a,b \in \cO_K$ coprime. If $b$ is not a unit, then
\[
\vy - \vs = \frac{b}{a}(\vx - \vs)
\]
implies that $b \mid (\vy - \vs)$, contradicting
$\gcd(\vy-\vs)=1$, which follows from the already proved inclusion
\eqref{eq:visible_from_point_s_II}.
Thus $b$ is a unit, and we may write $a = t^{-1} \in \cO_K$. 

If $a$ is not a unit, then $\gcd(\vx - \vs) \neq 1$, which is impossible.
Suppose that $a$ is a unit. Since $t = a^{-1} \in (0,1)$, it follows that $a = \lambda^{k}$ for some $k \in \mathbb{N}$.
Since $\vy = \vs + \lambda^{-k}(\vx - \vs) \in \Lambda$, we have
\[
\sigma(\vy) = \sigma(\vs) + \sigma(\lambda^{-k})\sigma(\vx - \vs) \in \cW,
\]
and hence
\begin{equation} \label{eq:point_sigma_x_minus_s_in_W}
\sigma(\lambda^{-k})\sigma(\vx - \vs) \in \cW - \sigma(\vs).
\end{equation}
As discussed above, for all the fields satisfying the Hammarhjelm condition, $\sigma(\lambda) = -1/\lambda$. We have 
\[
|\sigma(\lambda^{-k})| = (-1)^k\sigma(\lambda^{-k}) = \lambda^k \ge - \sigma(\lambda^{-1}) =\lambda >1.
\]
Recall that $\cW - \sigma(\vs)$ is convex and contains $\zero$. We distinguish two cases according to the parity of $k$.

\medskip
\noindent \textbf{Case 1.} $k$ is odd.

If $k=1$ then \eqref{eq:point_sigma_x_minus_s_in_W} implies 
\[
\sigma(\lambda^{-1})\sigma(\vx - \vs) \in \cW - \sigma(\vs).
\]
If $k\ge 3$, letting $\ell=  \sigma(\lambda^{k-1}) \in (0,1]$, \eqref{eq:point_sigma_x_minus_s_in_W} implies that
\[
\sigma(\lambda^{-1})\sigma(\vx - \vs) = \ell\sigma(\lambda^{-k})\sigma(\vx - \vs) \in \cW - \sigma(\vs).
\]

\medskip
\noindent \textbf{Case 2.} $k$ is even.

Combining this with \eqref{eq:point_sigma_x_minus_s_in_W}, we obtain
\[
\sigma(\lambda^{-1})^2\sigma(\vx-\vs)
=
-\lambda\sigma(\lambda^{-1})\sigma(\vx-\vs)
\in \cW-\sigma(\vs).
\]
Equivalently,
\[
\lambda\sigma(\lambda^{-1})\sigma(\vx-\vs)
\in -(\cW-\sigma(\vs)).
\]
By the hypothesis
\[
-(\cW-\sigma(\vs))
\subset
\left(1+\frac{1}{\lambda}\right)(\cW-\sigma(\vs)),
\]
we get
\[
\lambda\sigma(\lambda^{-1})\sigma(\vx-\vs)
\in
\left(1+\frac{1}{\lambda}\right)(\cW-\sigma(\vs)).
\]
By \Cref{lemma:inequality_for_fundamental_unit}, 
$1+\frac{1}{\lambda}\leq \lambda$.
Since $\cW-\sigma(\vs)$ is convex and contains $\zero$, the dilates are
nested under positive scaling. Hence
\[
\lambda\sigma(\lambda^{-1})\sigma(\vx-\vs)
\in
\lambda(\cW-\sigma(\vs)),
\]
and therefore
\[
\sigma(\lambda^{-1})\sigma(\vx-\vs)\in \cW-\sigma(\vs).
\]
In both cases,
\[
\sigma\!\left(\frac{\vx-\vs}{\lambda}\right)\in \cW-\sigma(\vs),
\]
contradicting our assumption. This proves the reverse inclusion and completes
the proof.
\end{proof}

\section{Proof of  \texorpdfstring{\Cref{thm:visibility_from_set_S}}{}}

In order to prove \Cref{thm:visibility_from_set_S}
we will establish a series of auxiliary lemmas. 
In \Cref{sec:proof_of_theorem_density_of_simultaneously_visible_points} we complete the proof.

\subsection{Window exclusions}

\begin{lemma}
\label{lemma:W_s_is_contained_in_W}
Let $\cW\subseteq\R^d$ be a bounded, convex, and centrally symmetric set with nonempty interior, let
$\vs\in\Lambda(\cW,\cL)$ satisfy the geometric condition \eqref{eq:geometric_condition}.
Define
\[
\cW_{\vs}
\df
\frac{1}{\lambda}\bigl(\cW+(\lambda+1)\sigma(\vs)\bigr).
\]
Then
\[
\cW_{\vs}\subseteq\cW.
\]
\end{lemma}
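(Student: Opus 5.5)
The plan is to take an arbitrary point of $\cW_{\vs}$ and rewrite it relative to the base point $\sigma(\vs)$. Then the geometric condition \eqref{eq:geometric_condition}, together with \Cref{lemma:inequality_for_fundamental_unit}, will place it inside $\cW$.

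First, since $\vs\in\Lambda(\cW,\cL)$, we have $\sigma(\vs)\in\cW$. Hence the translate $\cW-\sigma(\vs)$ is convex and contains $\zero$. Consequently its dilates are nested: $a(\cW-\sigma(\vs))\subseteq b(\cW-\sigma(\vs))$ whenever $0\le a\le b$.

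Now fix $\vw\in\cW$ and consider the point
\[
\frac{1}{\lambda}\bigl(\vw+(\lambda+1)\sigma(\vs)\bigr)
=\sigma(\vs)+\frac{1}{\lambda}\bigl(\vw+\sigma(\vs)\bigr).
\]
It suffices to show that $\lambda^{-1}(\vw+\sigma(\vs))\in\cW-\sigma(\vs)$.

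By central symmetry, $-\vw\in\cW$, so
\[
\vw+\sigma(\vs)=-\bigl((-\vw)-\sigma(\vs)\bigr)\in-(\cW-\sigma(\vs)).
\]
By \eqref{eq:geometric_condition}, this set is contained in $(1+\lambda^{-1})(\cW-\sigma(\vs))$. Dividing by $\lambda$ gives
\[
\frac{1}{\lambda}\bigl(\vw+\sigma(\vs)\bigr)\in \frac{1+\lambda^{-1}}{\lambda}\,(\cW-\sigma(\vs)).
\]

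Finally, \Cref{lemma:inequality_for_fundamental_unit} gives $1+\lambda^{-1}\le\lambda$, so the scaling factor $(1+\lambda^{-1})/\lambda$ is at most $1$. By the nesting of dilates noted above, the point therefore lies in $\cW-\sigma(\vs)$, which completes the argument.

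There is no real obstacle here. The only points needing care are to note that $\zero\in\cW-\sigma(\vs)$, which is what makes the dilates nested, and to recognize that the factor $(1+\lambda^{-1})/\lambda$ is controlled exactly by \Cref{lemma:inequality_for_fundamental_unit}.
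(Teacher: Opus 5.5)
Your proposal is correct and follows essentially the same route as the paper's proof. You write a point of $\cW_{\vs}$ as $\sigma(\vs)+\lambda^{-1}(\vw+\sigma(\vs))$, use central symmetry and the geometric condition to place $\lambda^{-1}(\vw+\sigma(\vs))$ in $\frac{1+\lambda^{-1}}{\lambda}(\cW-\sigma(\vs))$, and conclude via \Cref{lemma:inequality_for_fundamental_unit} and the nesting of dilates of a convex set containing $\zero$.
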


\begin{proof}
Since $\vs\in\Lambda(\cW,\cL)$, we have $\sigma(\vs)\in\cW$. Hence
\[
\zero\in \cW-\sigma(\vs).
\]
Since $\cW$ is convex, the translated set $\cW-\sigma(\vs)$ is also convex.
Therefore, because it contains $\zero$, we have
\[
t(\cW-\sigma(\vs))\subseteq \cW-\sigma(\vs)
\text{ for every }
t\in[0,1].
\]

Let $\vy\in\cW_{\vs}$. By definition, there exists $\vw\in\cW$ such that
\[
\vy
=
\frac{\vw+(\lambda+1)\sigma(\vs)}{\lambda}
=
\sigma(\vs)+\frac{\vw+\sigma(\vs)}{\lambda}.
\]
Thus
\[
\vy-\sigma(\vs)
=
\frac{\vw+\sigma(\vs)}{\lambda}.
\]
Since $\cW$ is centrally symmetric, $-\vw\in\cW$. Hence
\[
\vy-\sigma(\vs)
=
-\frac{1}{\lambda}\bigl((-\vw)-\sigma(\vs)\bigr)
\in
-\frac{1}{\lambda}(\cW-\sigma(\vs)).
\]
By hypothesis,
\[
-(\cW-\sigma(\vs))
\subseteq
\left(1+\frac{1}{\lambda}\right)(\cW-\sigma(\vs)).
\]
Multiplying by $1/\lambda$, we obtain
\[
-\frac{1}{\lambda}(\cW-\sigma(\vs))
\subseteq
\frac{1+\lambda^{-1}}{\lambda}(\cW-\sigma(\vs)).
\]
Using Lemma \ref{lemma:inequality_for_fundamental_unit}, we get
\[
0<\frac{1+\lambda^{-1}}{\lambda}\leq 1.
\]
Since $\cW-\sigma(\vs)$ is convex and contains $\zero$, it follows that
\[
\frac{1+\lambda^{-1}}{\lambda}(\cW-\sigma(\vs))
\subseteq
\cW-\sigma(\vs).
\]
Therefore
\[
\vy-\sigma(\vs)\in\cW-\sigma(\vs),
\]
and hence $\vy\in\cW$. Since $\vy\in\cW_{\vs}$ was arbitrary, we conclude
that
\[
\cW_{\vs}\subseteq\cW.
\]
\end{proof}

We denote the set of $\vs$-primitive points in
$\Lambda(\cW_{\vs},\cL)$ by
\[
\Lambda_{\pr,\{\vs\}}\!(\cW_{\vs},\cL)
\df
\left\{
\vx\in\Lambda(\cW_{\vs},\cL):
\gcd(\vx-\vs)=1
\right\}.
\]
More generally, if $\cW'\subset\cW$, we denote the set of
$S$-primitive points in $\Lambda(\cW',\cL)$ by
\[
\Lambda_{\pr,S}(\cW',\cL)
\df
\left\{
\vx\in\Lambda(\cW',\cL):
\gcd(\vx-\vs)=1 \text{ for all } \vs\in S
\right\}.
\]

By \Cref{lemma:visibility_from_a_point_s,lemma:W_s_is_contained_in_W}, we have
\begin{equation} \label{eq:visible_points_in_terms_of_primitive_points}
\begin{split}
\Lambda_{\vis,S}
&=
\Lambda_{\pr,S}(\cW,\cL)
\setminus
\bigcup_{\vs\in S}
\left(
\Lambda_{\pr,\{\vs\}}\!(\cW_{\vs},\cL)
\medcap
\Lambda_{\pr,S}\!(\cW,\cL)
\right). \\
\end{split}
\end{equation}
Applying the inclusion-exclusion principle to the union in
\eqref{eq:visible_points_in_terms_of_primitive_points}, we obtain 
\begin{equation}
\label{eq:inclusion_exclusion_for_visible_points_expanded}
\begin{aligned}
\card{\Lambda_{\vis,S}\medcap TD}&
=
\card{\Lambda_{\pr,S}
(\cW,\cL)  \medcap TD} \\
- & 
\sum_{\emptyset\neq I\subseteq S}
(-1)^{\card{I}+1}
\card{
\Lambda_{\pr,S}(\cW,\cL)
\medcap
\bigcap_{\vs\in I}\Lambda_{\pr,\{\vs\}}\!(\cW_{\vs},\cL)
\medcap
TD
}.
\end{aligned}
\end{equation}

\begin{lemma}
\label{lemma:intersection_of_S_primitive_and_singleton_primitive}
Let $\cW\subseteq\R^d$ be a bounded, convex, and centrally symmetric set with nonempty interior, let
$\vs\in\Lambda(\cW,\cL)$ satisfying the condition \eqref{eq:geometric_condition}.
Let $I\subseteq S$ be nonempty. Then
\[
\Lambda_{\pr,S}(\cW,\cL)
\medcap
\bigcap_{\vs\in I}
\Lambda_{\pr,\{\vs\}}\!(\cW_{\vs},\cL)
=
\Lambda_{\pr,S}(\cW_I,\cL).
\]
\end{lemma}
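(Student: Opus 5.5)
This is a set-theoretic identity, and I will prove it by double inclusion. The key point is that each $\Lambda_{\pr,\{\vs\}}(\cW_{\vs},\cL)$ differs from a cut-and-project set only by the gcd condition relative to $\vs$. That gcd condition is already contained in the $S$-primitivity condition, because $I\subseteq S$. So only the window conditions need to be intersected.

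First I unfold the left-hand side. A point $\vx$ lies in it if and only if four things hold:
\begin{enumerate}
\item $\vx=\pi_{\phys}(\vz)$ for some $\vz\in\cL$, and $\sigma(\vx)\in\cW$;
\item $\gcd(\vx-\vs')=1$ for all $\vs'\in S$;
\item for each $\vs\in I$, $\sigma(\vx)\in\cW_{\vs}$;
\item for each $\vs\in I$, $\gcd(\vx-\vs)=1$.
\end{enumerate}
Condition (4) is implied by (2), since $I\subseteq S$. Condition (3), taken over all $\vs\in I$, says exactly that $\sigma(\vx)\in\bigcap_{\vs\in I}\cW_{\vs}=\cW_I$. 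I am using that membership in $\Lambda(\cW',\cL)$ for $\vx\in\cO_K^d$ means $\sigma(\vx)\in\cW'$; this holds because $\pi_{\phys}|_{\cL}$ is injective and the internal coordinate of the lift of $\vx$ is $\sigma(\vx)$. Hence the left-hand side equals
\[
\{\vx\in\cO_K^d:\ \sigma(\vx)\in\cW\medcap\cW_I,\ \gcd(\vx-\vs')=1\ \forall \vs'\in S\}.
\]

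Second, I remove the factor $\cW$. By \Cref{lemma:W_s_is_contained_in_W}, applied to each $\vs\in I\subseteq S$ (each satisfies \eqref{eq:geometric_condition} by hypothesis), we have $\cW_{\vs}\subseteq\cW$. Therefore $\cW_I\subseteq\cW$ and $\cW\medcap\cW_I=\cW_I$. The displayed set is then exactly $\Lambda_{\pr,S}(\cW_I,\cL)$. This notation is legitimate because the definition of $\Lambda_{\pr,S}(\cW',\cL)$ requires $\cW'\subset\cW$.

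The only real obstacle is notational. The hypothesis as written applies the geometric condition to "$\vs$", but it must be used for every $\vs\in I$, so I will state explicitly that it is assumed for all $\vs\in S$, as in \Cref{thm:visibility_from_set_S}. I also need $\cW_I$ to be a legitimate window. It is an intersection of finitely many sets of the form $\cW_{\vs}$, so it is bounded; it may be empty, in which case both sides are empty and the identity holds trivially.
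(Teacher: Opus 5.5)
Your proposal is correct and follows essentially the same argument as the paper: unfold the definitions, note that the coprimality conditions for $\vs\in I$ are already contained in $S$-primitivity, observe that the window conditions intersect to $\sigma(\vx)\in\cW_I$, and use \Cref{lemma:W_s_is_contained_in_W} to get $\cW_I\subseteq\cW$. The paper writes this as two separate inclusions, whereas you package it as a single chain of equivalences. Your remark that the geometric condition \eqref{eq:geometric_condition} must hold for every $\vs\in I$, not just a single $\vs$, is accurate and matches how the paper's proof actually uses it.
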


\begin{proof}
Let $\vx\in
\Lambda_{\pr,S}(\cW,\cL)
\medcap
\bigcap_{\vs\in I}\Lambda_{\pr,\{\vs\}}\!(\cW_{\vs},\cL)$.
Since $\vx\in\Lambda_{\pr,S}(\cW,\cL)$, we have
\[
\gcd(\vx-\vs)=1
\qquad
\text{for all }\vs\in S.
\]
Moreover, for every $\vs\in I$, the condition
$\vx\in\Lambda_{\pr,\{\vs\}}\!(\cW_{\vs},\cL)$ implies
$\vx\in\Lambda(\cW_{\vs},\cL)$. Hence
\[
\sigma(\vx)\in\bigcap_{\vs\in I}\cW_{\vs}=\cW_I,
\]
and therefore $\vx\in\Lambda(\cW_I,\cL)$. Combining this with the
coprimality conditions for all $\vs\in S$, we obtain
\[
\vx\in\Lambda_{\pr,S}(\cW_I,\cL).
\]
Thus
\[
\Lambda_{\pr,S}(\cW,\cL)
\medcap
\bigcap_{\vs\in I}\Lambda_{\pr,\{\vs\}}\!(\cW_{\vs},\cL)
\subseteq
\Lambda_{\pr,S}(\cW_I,\cL).
\]

Conversely, let $\vx\in\Lambda_{\pr,S}(\cW_I,\cL)$. Then
\[
\sigma(\vx)\in\cW_I=\bigcap_{\vs\in I}\cW_{\vs}
\]
and
\[
\gcd(\vx-\vs)=1
\qquad
\text{for all }\vs\in S.
\]
Since $\cW_I\subseteq\cW_{\vs}$ for every $\vs\in I$, it follows that
\[
\vx\in\Lambda_{\pr,\{\vs\}}\!(\cW_{\vs},\cL)
\qquad
\text{for every }\vs\in I.
\]
Also, by \Cref{lemma:W_s_is_contained_in_W}, we have
$\cW_{\vs}\subseteq\cW$ for every $\vs\in I$, and hence
\[
\cW_I\subseteq\cW.
\]
Therefore $\vx\in\Lambda(\cW,\cL)$. Together with the coprimality conditions
for all $\vs\in S$, this gives
\[
\vx\in\Lambda_{\pr,S}(\cW,\cL).
\]
Thus
\[
\Lambda_{\pr,S}(\cW_I,\cL)
\subseteq
\Lambda_{\pr,S}(\cW,\cL)
\medcap
\bigcap_{\vs\in I}\Lambda_{\pr,\{\vs\}}\!(\cW_{\vs},\cL),
\]
and the proof is complete.
\end{proof}

Using \eqref{eq:inclusion_exclusion_for_visible_points_expanded} and \Cref{lemma:intersection_of_S_primitive_and_singleton_primitive}, we obtain the following \namecref{corollary:set_visible_points_in_terms_of_primitive_points_2}.

\begin{corollary} \label{corollary:set_visible_points_in_terms_of_primitive_points_2}
Under the hypotheses of \Cref{thm:visibility_from_set_S}, 
for every sufficiently large $T>0$,
\begin{equation*}
%\label{eq:simplified_inclusion_exclusion_expression}
\begin{split}
\card{\Lambda_{\vis,S}(\cW, \cL)  \medcap TD} 
 = &
\card{\Lambda_{\pr,S}(\cW, \cL)  \medcap TD}\\
&
\ - \sum_{\emptyset \neq I \subseteq S} (-1)^{\card{I}+1} \card{\Lambda_{\pr,S}(\cW_I, \cL)  \medcap TD}.
\end{split}    
\end{equation*}    
\end{corollary}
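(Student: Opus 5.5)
The corollary follows by combining the identity \eqref{eq:visible_points_in_terms_of_primitive_points} with inclusion–exclusion and then substituting \Cref{lemma:intersection_of_S_primitive_and_singleton_primitive} term by term.

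First I justify \eqref{eq:visible_points_in_terms_of_primitive_points}. By definition, $\Lambda_{\vis,S}=\bigcap_{\vs\in S}\Lambda_{\vis,\{\vs\}}\setminus S$. \Cref{lemma:visibility_from_a_point_s} applies to every $\vs\in S$ because \eqref{eq:geometric_condition} is assumed for each of them. It says that $\vx\in\Lambda$ is visible from $\vs$ exactly when two conditions hold: $\gcd(\vx-\vs)=1$, and $\sigma(\vx)\notin\cW_{\vs}$. The second condition comes from rewriting $\sigma((\vx-\vs)/\lambda)\in\cW-\sigma(\vs)$ using $\sigma(\lambda)=-1/\lambda$ and the central symmetry of $\cW$. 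In the case $\fd=2$, where $N(\lambda)$ may be $+1$, I would double-check this rewriting, but the paper already takes it for granted in \eqref{eq:visible_points_in_terms_of_primitive_points}. Points of $S$ are excluded automatically, since $\gcd(\zero)=0\neq1$.

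Next, for $\vx\in\Lambda_{\pr,S}(\cW,\cL)$, the condition "$\vx$ not visible from $\vs$" is equivalent to $\sigma(\vx)\in\cW_{\vs}$. Because $\cW_{\vs}\subseteq\cW$ by \Cref{lemma:W_s_is_contained_in_W}, this is in turn equivalent to $\vx\in\Lambda_{\pr,\{\vs\}}(\cW_{\vs},\cL)$. This gives \eqref{eq:visible_points_in_terms_of_primitive_points}.

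Now intersect with $TD$. Set $A=\Lambda_{\pr,S}(\cW,\cL)\medcap TD$ and $B_{\vs}=A\medcap\Lambda_{\pr,\{\vs\}}(\cW_{\vs},\cL)$. Then $\Lambda_{\vis,S}\medcap TD=A\setminus\bigcup_{\vs\in S}B_{\vs}$. All of these sets are finite: $TD$ is bounded and $\Lambda$ is uniformly discrete. Hence
\[
\card{A\setminus\textstyle\bigcup_{\vs} B_{\vs}}=\card{A}-\card{\textstyle\bigcup_{\vs} B_{\vs}},
\]
and the finite inclusion–exclusion formula gives
\[
\card{\textstyle\bigcup_{\vs} B_{\vs}}=\sum_{\emptyset\neq I\subseteq S}(-1)^{\card{I}+1}\card{\textstyle\bigcap_{\vs\in I}B_{\vs}}.
\]
This is exactly \eqref{eq:inclusion_exclusion_for_visible_points_expanded}.

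Finally, \Cref{lemma:intersection_of_S_primitive_and_singleton_primitive} identifies each $\bigcap_{\vs\in I}B_{\vs}$ with $\Lambda_{\pr,S}(\cW_I,\cL)\medcap TD$. Its hypotheses hold since every $\vs\in S$ satisfies \eqref{eq:geometric_condition}. Substituting gives the stated identity, and in fact for every $T>0$, not only for large $T$.

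The only point needing care is the first step: checking that the $\sigma$-condition from \Cref{lemma:visibility_from_a_point_s} is exactly membership $\sigma(\vx)\in\cW_{\vs}$. The remaining steps are formal.
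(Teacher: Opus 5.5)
Your proposal is correct and follows the same route as the paper. You derive \eqref{eq:visible_points_in_terms_of_primitive_points} from \Cref{lemma:visibility_from_a_point_s}, apply finite inclusion--exclusion inside $TD$, and substitute \Cref{lemma:intersection_of_S_primitive_and_singleton_primitive}; you are also right that the identity holds for every $T>0$. Your hesitation about $\fd=2$ is unnecessary: $N(1+\sqrt{2})=-1$, so $\sigma(\lambda)=-1/\lambda$ holds for all seven Hammarhjelm fields, and your rewriting of the $\sigma$-condition as $\sigma(\vx)\notin\cW_{\vs}$ is valid throughout.
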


The final step in the proof of \Cref{thm:visibility_from_set_S} is the following
\namecref{lemma:density_of_primitive_points}.

\begin{lemma} \label{lemma:density_of_primitive_points}
Under the hypotheses of \Cref{thm:visibility_from_set_S}, let either
$\cW'=\cW$, or $\cW'=\cW_I$ for some nonempty subset
$I\subseteq S$. Then the density of $\Lambda_{\pr,S}(\cW',\cL)$ along $\{TD\}_{T>0}$ exists and is given by:
\[
\theta\bigl(\Lambda_{\pr,S}(\cW',\cL)\bigr)
=
\frac{\vol(\cW')}{\covol(\cL)}
\prod_{\fp\in\cP_{\cO_K}}
\left(1-\frac{s(\fp)}{N(\fp)^d}\right).
\]
\end{lemma}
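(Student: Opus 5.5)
Since $\cW'$ is bounded and convex with nonempty interior (for $\cW_I$ this needs $\vol(\cW_I)>0$; if $\cW_I$ is empty or lower dimensional its volume vanishes and the count is $o(T^d)$ because $\Lambda(\cW_I,\cL)$ sits inside a thin window), $\Lambda(\cW',\cL)$ is an irreducible cut-and-project set. Its density along $\{TD\}$ is therefore $\vol(\cW')/\covol(\cL)$ by \eqref{eq:density_irreducible_CPS}. The plan is a sieve over prime ideals of $\cO_K$. For each $\fp\in\cP_{\cO_K}$, the condition ``$\gcd(\vx-\vs)\neq 1$ for some $\vs\in S$'' with $\fp\mid\vx-\vs$ means exactly $\pi_\fp(\vx)\in\pi_\fp(S)$. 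Thus
\[
\Lambda_{\pr,S}(\cW',\cL)=\{\vx\in\Lambda(\cW',\cL):\pi_\fp(\vx)\notin\pi_\fp(S)\ \forall \fp\}.
\]

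\textbf{Step 1: finite sieve.} Fix a finite set $F$ of primes and set $\mathfrak a=\prod_{\fp\in F}\fp$. The set $\{\vx\in\Lambda(\cW',\cL):\pi_\fp(\vx)\notin\pi_\fp(S)\ \forall\fp\in F\}$ is a finite union of translates of cut-and-project sets built from the sublattice $\{(\vx,\sigma(\vx)):\vx\in\mathfrak a^d\}$, of index $N(\mathfrak a)^d$. Its $\sigma$-image is still dense in $\R^d$, and injectivity persists. By CRT, the allowed residue classes modulo $\mathfrak a$ number $\prod_{\fp\in F}(N(\fp)^d-s(\fp))$. Applying \eqref{eq:density_irreducible_CPS} (translates of irreducible cut-and-project sets have the same density) to each class gives density
\[
\frac{\vol(\cW')}{\covol(\cL)}\prod_{\fp\in F}\Bigl(1-\frac{s(\fp)}{N(\fp)^d}\Bigr).
\]
This yields the upper bound for $\overline\theta$ after letting $F$ increase. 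The Euler product converges absolutely since $s(\fp)\le\card{S}$ and $d\ge2$, as $\sum_\fp N(\fp)^{-2}<\infty$.

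\textbf{Step 2: tail estimate (the main obstacle).} For the lower bound I must show
\[
\limsup_{T\to\infty} T^{-d}\,\card{\{\vx\in\Lambda(\cW',\cL)\cap TD:\exists\fp\notin F,\ \pi_\fp(\vx)\in\pi_\fp(S)\}}
\]
tends to $0$ as $F$ exhausts $\cP_{\cO_K}$. The difficulty is that the ideals $\fp$ are not ordered by size, and the unit group makes a fixed norm correspond to elements of unbounded size. The key point is that both $\vx$ and $\sigma(\vx)$ are controlled: $\vx\in TD$ has coordinates $O(T)$, and $\sigma(\vx)\in\cW'$ is bounded. If $\fp=(\pi)$ divides $\vx-\vs\neq\zero$, then for some coordinate $j$ the nonzero element $x_j-s_j$ satisfies
\[
N(\fp)\le |N(x_j-s_j)|\ll T.
\]
So only primes with $N(\fp)\ll T$ matter. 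For each such $\fp$, I count points of $\Lambda(\cW'-\sigma(\vs),\cL)\cap(TD-\vs)$ lying in $\fp^d$; this is the lattice $\{(\vy,\sigma(\vy)):\vy\in\fp^d\}$ of covolume $N(\fp)^d\covol(\cL)$. Choosing a generator $\pi$ balanced so that $|\pi|\asymp|\sigma(\pi)|\asymp N(\fp)^{1/2}$, which is possible by multiplying by units, I rescale by $\pi$. A lattice-point count in the convex box $(TD)\times\cW'$ scaled by $(\pi^{-1},\sigma(\pi)^{-1})$ then gives a uniform bound of
\[
\ll \frac{T^d}{N(\fp)^d}+\Bigl(\frac{T}{N(\fp)^{1/2}}\Bigr)^{d-1}+1
\]
points per prime and per $\vs$, via a Davenport-type estimate for the $2d$-dimensional lattice $\cO_K^{2d}$ embedding. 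Summing over $\fp\notin F$ with $N(\fp)\ll T$ and multiplying by $\card{S}$, the main term gives $T^d\sum_{\fp\notin F}N(\fp)^{-d}$, which is a tail of a convergent series. The error terms give
\[
T^{d-1}\sum_{N(\fp)\ll T}N(\fp)^{-(d-1)/2}+\#\{\fp:N(\fp)\ll T\},
\]
which is $o(T^d)$ for $d\ge2$. (For $d=2$ this is $T\cdot T^{1/2}$, and the last term is $O(T)$.) I expect this step to need the most care. The balanced-generator choice is exactly what makes the rescaled domain have bounded eccentricity so that Davenport applies uniformly.

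\textbf{Step 3: conclusion.} Combining Steps 1 and 2 gives $\underline\theta\ge$ the finite product minus the tail, and $\overline\theta\le$ the finite product. Letting $F\uparrow\cP_{\cO_K}$ yields existence of the density with the stated value, using $\dens(\cL)=1/\covol(\cL)$.
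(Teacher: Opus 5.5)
There is a gap in Step 2: the per-prime bound is true, but the argument you give for it does not produce it, and the method as described fails for $d=2$.

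\textbf{What is fine.} Your architecture matches the paper's. A finite sieve gives the density for finitely many primes and hence the upper bound. The observation $N(\fp)\le|N(x_j-s_j)|\ll T$ limits the relevant primes. A per-prime count with main term $T^d/N(\fp)^d$ then turns the tail into a remainder of $\sum_\fp N(\fp)^{-d}$. Your Step 1 is a correct, more elementary substitute for the paper's internal space $\R^d\times\prod_\fp(\cO_K/\fp)^d$: you split into residue classes modulo $\mathfrak a=\prod_{\fp\in F}\fp$, i.e.\ finitely many translated Euclidean cut-and-project sets over $\{(\vx,\sigma(\vx)):\vx\in\mathfrak a^d\}$. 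Your treatment of a degenerate $\cW_I$ with empty interior also covers a case the paper passes over.

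\textbf{Why Step 2 fails as written.} With a balanced generator, $|\pi|\asymp|\sigma(\pi)|\asymp N(\fp)^{1/2}$, the rescaled body in $\R^{2d}$ has physical radius $\asymp T N(\fp)^{-1/2}$ and internal radius $\asymp N(\fp)^{-1/2}$. Its eccentricity is therefore $\asymp T$, not bounded. After transporting $\cL$ to $\Z^{2d}$, some $d$-dimensional coordinate subspace receives the long directions isomorphically. So Davenport's error term contains a projection of volume $\asymp (T/N(\fp)^{1/2})^{d}$, not $(T/N(\fp)^{1/2})^{d-1}$. Summed over $\fp\notin F$ with $N(\fp)\ll T$, this gives $T^d\sum N(\fp)^{-d/2}$. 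For $d=2$ that is of order $T^2\log\log T$: it is neither $o(T^2)$ nor small as $F$ grows. For $d\ge3$ this lossy bound would still suffice, since $\sum N(\fp)^{-d/2}$ converges.

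\textbf{The fix: choose the unit depending on $T$.} Take a generator of $\fp$ with $|\pi|/|\sigma(\pi)|\asymp T$, i.e.\ $|\pi|\asymp (TN(\fp))^{1/2}$ and $|\sigma(\pi)|\asymp (N(\fp)/T)^{1/2}$. Then both radii of the rescaled body are $\asymp (T/N(\fp))^{1/2}$, and Davenport genuinely gives
\[
\frac{\vol(TD)\vol(\cW')}{\covol(\cL_\fp)}+O\bigl((T/N(\fp))^{d-1/2}\bigr)\qquad\text{for } N(\fp)\le T.
\]
This is exactly the Gringlaz--Kumar--Weiss estimate (their Lemma 5.4) that the paper imports. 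The paper handles the range $T<N(\fp)\le cT$ separately with Hammarhjelm's Lemma 4.6.

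\textbf{A simpler alternative.} Put $D$ and $\cW'$ in boxes and count coordinatewise, using
\[
\card{\{z\in\cO_K:\ |z|\le X,\ |\sigma(z)|\le Y\}}\ll XY+1 .
\]
This holds uniformly because multiplying by a suitable power of $\lambda$ turns the box into a roughly square one of area $\asymp XY$ for the fixed planar lattice $f(\cO_K)$. It gives at most $\ll (T/N(\fp)+1)^d\ll (T/N(\fp))^d$ points for every relevant $\fp$. The tail estimate then follows directly, with no error terms and no split at $N(\fp)=T$.
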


\subsection{Completion of the proof of \texorpdfstring{\Cref{thm:visibility_from_set_S}}{}}
\label{sec:proof_of_theorem_density_of_simultaneously_visible_points}

We assume \Cref{lemma:density_of_primitive_points} for now and postpone its proof to the next subsection. Combining 
\Cref{corollary:set_visible_points_in_terms_of_primitive_points_2} with \Cref{lemma:density_of_primitive_points},  we obtain that the density of
$\Lambda_{\vis,S}(\cW,\cL)$ along the sequence $\{TD\}_{T>0}$ exists. Moreover,
\[
\begin{aligned}
\theta(\Lambda_{\vis,S}(\cW,\cL))
&=
\lim_{T\to\infty}
\frac{\card{\Lambda_{\pr,S}(\cW,\cL)\medcap TD}}
{\vol(TD)} \\
&\quad
-
\sum_{\emptyset\neq I\subseteq S}
(-1)^{\card{I}+1}
\lim_{T\to\infty}
\frac{\card{\Lambda_{\pr,S}(\cW_I,\cL)\medcap TD}}
{\vol(TD))} \\
&=
\frac{\vol(\cW)}{\covol(\cL)}
\prod_{\fp\in\cP_{\cO_K}}
\left(1-\frac{s(\fp)}{N(\fp)^d}\right) \\
&\quad
-
\sum_{\emptyset\neq I\subseteq S}
(-1)^{\card{I}+1}
\frac{\vol(\cW_I)}{\covol(\cL)}
\prod_{\fp\in\cP_{\cO_K}}
\left(1-\frac{s(\fp)}{N(\fp)^d}\right) \\
&=
\dens(\cL)
\left(
\vol(\cW)
-
\sum_{\emptyset\neq I\subseteq S}
(-1)^{\card{I}+1}\vol(\cW_I)
\right)
\prod_{\fp\in\cP_{\cO_K}}
\left(1-\frac{s(\fp)}{N(\fp)^d}\right).
\end{aligned}
\]
This is the formula claimed in \Cref{thm:visibility_from_set_S}. 

It remains to prove \Cref{lemma:density_of_primitive_points}.

\subsection{Proof of \texorpdfstring{\Cref{lemma:density_of_primitive_points}}{}}

For a rational prime $p$, let $M_p$ denote the set of prime ideals of
$\cO_K$ lying above $p$. Since $K$ is quadratic, we have
\[
\card{M_p}\leq 2.
\]
Let $p_i$ denote the $i$th rational prime, and define
\[
\EE_t\df \bigcup_{i=1}^t M_{p_i},
\qquad t\in\N.
\]
For a prime ideal $\fp$, define
\[
\fp^d \df \{(x_1,\ldots, x_d)\in \cO_K^d : x_i\in \fp\}.
\]
For $\cW'$ as in \Cref{lemma:density_of_primitive_points}, define
\[
\Lambda_{\pr,S, \EE_t}(\cW',\cL)
\df
\left\{
\vx\in\Lambda(\cW',\cL):
\vx-\vs \notin \fp^d
\text{ for all } \fp\in\EE_t \text{ and all } \vs\in S
\right\}.
\]
Equivalently, $\Lambda_{\pr,S,\EE_t}(\cW',\cL)$ is obtained from
$\Lambda(\cW',\cL)$ by imposing only the primitivity conditions coming from
prime ideals above the first $t$ rational primes.

In the following \namecref{lemma:density_of_finite_prime_obstructions}, we show that the density of $\Lambda_{\pr,S,\EE_t}(\cW',\cL)$ along averaging sets $TD$ exists.

\begin{lemma} \label{lemma:density_of_finite_prime_obstructions}
Under the hypotheses of \Cref{lemma:density_of_primitive_points}, the density
of $\Lambda_{\pr,S,\EE_t}(\cW',\cL)$ along the sequence $\{TD\}_{T>0}$ exists.
Moreover,
\begin{equation}
\label{eq:density_result_for_finite-primes}
\theta(\Lambda_{\pr, S,\EE_t})= \frac{\vol(\cW')}{\covol(\cL)}\cdot \prod_{\fp\in \EE_t}\left(1-\frac{s(\fp)}{N(\fp)^d}\right).
\end{equation}
\end{lemma}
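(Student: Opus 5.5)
The plan is to write $\Lambda_{\pr,S,\EE_t}(\cW',\cL)$ as a finite disjoint union of cut-and-project sets built on sublattices of $\cL$. Then the density of each piece is given by \eqref{eq:density_irreducible_CPS}, and summing the pieces yields \eqref{eq:density_result_for_finite-primes}.

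Let $\mathfrak{a}=\prod_{\fp\in\EE_t}\fp$. Since the primes in $\EE_t$ are distinct, the Chinese remainder theorem gives an isomorphism
\[
\left(\quotient{\cO_K}{\mathfrak{a}}\right)^d\cong\prod_{\fp\in\EE_t}\left(\quotient{\cO_K}{\fp}\right)^d .
\]
For $\vx\in\cO_K^d$, the condition ``$\vx-\vs\notin\fp^d$ for all $\fp\in\EE_t$ and all $\vs\in S$'' depends only on the residue class of $\vx$ modulo $\mathfrak{a}^d$. In the factor at $\fp$, it says that $\pi_\fp(\vx)\notin\pi_\fp(S)$, and there are exactly $N(\fp)^d-s(\fp)$ admissible classes. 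Let $R\subset(\cO_K/\mathfrak{a})^d$ be the set of admissible classes. Then
\[
\card{R}=\prod_{\fp\in\EE_t}\bigl(N(\fp)^d-s(\fp)\bigr).
\]
For each $\vr\in R$, fix a representative $\vr\in\cO_K^d$. This gives the disjoint decomposition
\[
\Lambda_{\pr,S,\EE_t}(\cW',\cL)=\bigsqcup_{\vr\in R}\{\vx\in\Lambda(\cW',\cL):\vx\in\vr+\mathfrak{a}^d\}.
\]

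Under the Minkowski-type embedding $\vx\mapsto(\vx,\sigma(\vx))$, the set $\mathfrak{a}^d$ corresponds to a sublattice $\cL_{\mathfrak{a}}\subset\cL$ of index $N(\mathfrak{a})^d$. Hence $\covol(\cL_{\mathfrak{a}})=N(\mathfrak{a})^d\covol(\cL)$. The density of $\pi_{\internal}(\cL_{\mathfrak{a}})$ follows from $\mathfrak{a}$ being a full-rank $\Z$-module in $K$, so that its image under $\sigma$ is dense in $\R$, and injectivity of $\pi_{\phys}$ is inherited from $\cL$.

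Each piece is a translated cut-and-project set. Writing $\hat{\vr}=(\vr,\sigma(\vr))$, we have
\[
\{\vx\in\Lambda(\cW',\cL):\vx\equiv\vr\}=\vr+\Lambda\bigl(\cW'-\sigma(\vr),\cL_{\mathfrak{a}}\bigr),
\]
and its window $\cW'-\sigma(\vr)$ is a translate of $\cW'$. The window $\cW'$ is either $\cW$ or a finite intersection of the convex sets $\cW_{\vs}$. It is therefore bounded and convex, so its boundary is Lebesgue-null; this is where \cite{szabo_simple_1997} is used.

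The main obstacle is the case where $\cW_I$ has empty interior, since then irreducibility fails. In that case $\vol(\cW_I)=0$. I would bound the piece above by a cut-and-project set with a slightly enlarged open window $\cW_I+B_\epsilon$, which is irreducible, and then let $\epsilon\to0$. This gives upper density $0$, and the formula still holds.

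In the nondegenerate case, apply \eqref{eq:density_irreducible_CPS} along the van Hove sequence $\{TD\}$ to each piece. Translating by the fixed vector $\vr$ changes the count in $TD$ only by $o(\vol(TD))$, because of the van Hove property. Each piece therefore has density
\[
\frac{\vol(\cW')}{N(\mathfrak{a})^d\covol(\cL)} .
\]
Summing over $R$ and using $N(\mathfrak{a})=\prod_{\fp\in\EE_t}N(\fp)$ gives
\[
\frac{\vol(\cW')}{\covol(\cL)}\prod_{\fp\in\EE_t}\frac{N(\fp)^d-s(\fp)}{N(\fp)^d},
\]
which is exactly \eqref{eq:density_result_for_finite-primes}.
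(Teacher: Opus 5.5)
Your proof is correct, but it takes a different route from the paper's.

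\textbf{The paper's route.} The paper never splits into residue classes. It enlarges the internal space to $H=\R^d\times\prod_{\fp}(\cO_K/\fp)^d$ and embeds $\cO_K^d$ diagonally as a lattice $\cL''$. It then checks, via the Chinese remainder theorem and a compact-kernel argument, that $\cL''$ is cocompact with dense internal projection. With the window $\cW''=\cW'\times\prod_{\fp\in\EE_t}\cW_{\fp}\times\prod_{\fp\notin\EE_t}H_{\fp}$, it applies the LCAG density theorem \eqref{eq:density_irreducible_CPS} once.

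\textbf{Your route.} You use the CRT to write $\Lambda_{\pr,S,\EE_t}(\cW',\cL)$ as a disjoint union of $\prod_{\fp\in\EE_t}(N(\fp)^d-s(\fp))$ translates of Euclidean cut-and-project sets on the sublattice $\cL_{\mathfrak a}$ of index $N(\mathfrak a)^d$, and then sum their densities. This is essentially the paper's construction with the trivial factors dropped and the single nontrivial finite factor $(\cO_K/\mathfrak a)^d$ unfolded into cosets.

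\textbf{What each buys.} Your version needs only the Euclidean density theorem plus the fact that a fixed translate changes counts in $TD$ by $o(\vol(TD))$. It also handles the case $s(\fp)=N(\fp)^d$ automatically, since $R$ is then empty. The paper's version identifies the set directly as a model set in $G\times H$. That is the framework it reuses in Corollary~\ref{thm_weak_model}, where all primes are imposed at once and no finite coset decomposition is available.

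\textbf{The degenerate case never arises.} Your $\cW_I+B_\epsilon$ argument is valid but unnecessary. Let $\|\cdot\|_{\cW}$ be the gauge norm of $\cW$ and suppose $\sigma(\vs)\neq\zero$. Applying \eqref{eq:geometric_condition} to points of $\cW$ approaching $-\sigma(\vs)/\|\sigma(\vs)\|_{\cW}$ gives $\|\sigma(\vs)\|_{\cW}\le 1/(2\lambda+1)$. Consequently every $\cW_{\vs}$ contains the open gauge ball of radius $1/(2\lambda+1)$ about $\zero$, so $\cW_I$ always has nonempty interior. The paper uses this fact tacitly when it calls $\cW''$ regular.
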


\begin{proof}
Let $G=\R^d$ be the physical space, and let the internal space be given by
\[
H= \R^d\times \prod_{\fp\in \mathcal{P}_{\mathcal{O}_K}}H_{\fp}, 
\]
where $H_{\fp}= \left(\quotient{\cO_K}{\fp}\right)^d$.
In the group  $H$, each finite factor is equipped with the normalized counting measure.
Both $G$ and $H$ are $\sigma$-compact and LCAGs, and the group $H$ is compactly generated. Let $\cL''$ be the image of the diagonal embedding
\[
\cO_K^d \hookrightarrow G\times H,
\qquad
\vx \hookrightarrow (\vx, \sigma(\vx) , (\pi_{\fp}(\vx))_{\fp\in \mathcal{P}_{\mathcal{O}_K}}).
\]
The restriction $\pi_{\phys}''|_{\cL''}: \cL''\to G$ is injective. 
Using the Chinese remainder theorem and the fact that $\pi_{{\internal}}(\cL)$ is dense in $\mathbb{R}^d$ it can be shown that the image $\pi_{\internal}''(\cL'')$ is dense in $H$. 
The projection 
\[
G \times H \longrightarrow \R^{2d}, \qquad  (\vu, \vv , (\vw_{\fp})_{\fp\in \mathcal{P}_{\mathcal{O}_K}}) \longmapsto (\vu, \vv)
\]
descends to a well-defined continuous surjective map
\[
\quotient{(G \times H)}{\cL''} \longrightarrow \quotient{\R^{2d}}{\cL}
\]
whose kernel is $\prod_{\fp \in \mathcal{P}_{\mathcal{O}_K}}H_\fp$.
Since $\cL$ is a cocompact lattice, $\quotient{G \times H}{\cL''}$ is compact, which shows that $\cL''$ is a cocompact lattice.

Define
\[
\cW'' \ \df \ \cW'\times \prod_{\fp\in \EE_t}\cW_{\fp}\times \prod_{\substack{\fp\in\cP_{\cO_K}\\ \fp\notin \EE_t}}H_{\fp},
\]
where 
\[
\cW_{\fp}
\ \df \ 
\left(\quotient{\cO_K}{\fp}\right)^d{\big\backslash}\pi_{\fp}(S).
\]
By construction, the associated cut-and-project set $\Lambda(\cW'', \cL'')$
consists exactly of
those $\vx\in\cO_K^d$ such that
\[
\sigma(\vx)\in\cW'
\qquad\text{and}\qquad
\pi_{\fp}(\vx)\notin\pi_{\fp}(S)
\quad
\text{for every }\fp\in\EE_t.
\]
Equivalently,
\[
\Lambda(\cW'',\cL'')
=
\Lambda_{\pr,S,\EE_t}(\cW',\cL).
\]

Suppose that for a prime ideal $\fp \in \EE_t$, we have $s(\fp)=N(\fp)^d$. In this case $\cW''= \varnothing$. Therefore, $\Lambda_{\pr,S,\EE_t}(\cW',\cL)=\varnothing$.  Since $\Lambda_{\pr,S}(\cW',\cL) \subseteq\Lambda_{\pr,S,\EE_t}(\cW',\cL)$, it follows that \eqref{eq:density_result_for_finite-primes} holds true.

Now suppose that $s(\fp)<N(\fp)^d$ for all $\fp \in \EE_t$. In this case, the set $\cW''$ is a regular window: it is relatively compact,  has nonempty interior, and its boundary has Haar measure zero.
Furthermore,
\[
\vol(\cW'')
=
\vol(\cW')
\prod_{\fp\in\EE_t}
\left(1-\frac{s(\fp)}{N(\fp)^d}\right).
\]
It follows from the construction that $\Lambda(\cW'',\cL'')$ is an irreducible cut-and-project set. Applying the density formula \eqref{eq:density_irreducible_CPS} for irreducible cut-and-project sets, we get
\[
\begin{split}
\theta(\Lambda_{\pr,S, \EE_t})
%& 
= \theta(\Lambda(\cW'',\cL''))
%\\ &
&= \frac{\vol(\cW')}{\covol(\cL'')} \prod_{\fp\in \EE_t}\left(1- \frac{s(\fp)}{N(\fp)^d}\right)
\\
&
= \frac{\vol(\cW')}{\covol(\cL)} \prod_{\fp\in \EE_t}\left(1- \frac{s(\fp)}{N(\fp)^d}\right)
\end{split}  
\]
completing the proof of \Cref{lemma:density_of_finite_prime_obstructions}.
\end{proof}

Since
\[
\Lambda_{\pr,S}(\cW',\cL)
\subseteq
\Lambda_{\pr, S,\EE_t}(\cW',\cL)
\]
for every $t\in\N$, \Cref{lemma:density_of_finite_prime_obstructions} gives
\[
\overline{\theta}\bigl(\Lambda_{\pr,S}(\cW',\cL)\bigr)
\leq
\theta\bigl(\Lambda_{\pr,S,\EE_t}(\cW',\cL)\bigr)
=
\frac{\vol(\cW')}{\covol(\cL)}
\prod_{\fp\in\EE_t}
\left(1-\frac{s(\fp)}{N(\fp)^d}\right).
\]
Letting $t\to\infty$, we obtain
\[
\overline{\theta}\bigl(\Lambda_{\pr,S}(\cW',\cL)\bigr)
\leq
\frac{\vol(\cW')}{\covol(\cL)}
\prod_{\fp\in\cP_{\cO_K}}
\left(1-\frac{s(\fp)}{N(\fp)^d}\right).
\]

We will make use of the following statement in the sequel.
\begin{lemma} \label{lemma:aux_identity}
Suppose that $B \subseteq A \subset \R^d$ are nonempty sets and $\theta(A)$ exists along an averaging sequence $\{D_T\}_{T >0}$.
Then,
\[
\underline{\theta}\bigl(B)
=
\theta\bigl(A)
-
\overline{\theta}\left(
A
\setminus
B
\right).
\]
\end{lemma}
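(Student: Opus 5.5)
The identity is purely set-theoretic and rests on the additivity of counting measure. For each $T$, the disjoint decomposition $A=B\sqcup(A\setminus B)$ gives
\[
\card{B\medcap D_T}
=
\card{A\medcap D_T}-\card{(A\setminus B)\medcap D_T}.
\]
We divide by $\theta_G(D_T)$, which is positive for all large $T$, and take $\liminf$ as $T\to\infty$ on both sides.

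The only analytic input is the elementary rule for $\liminf$ of a difference when one term converges. If $a_T\to a$ is finite and $b_T$ is arbitrary, then
\[
\liminf_{T\to\infty}(a_T-b_T)=a-\limsup_{T\to\infty} b_T .
\]
This follows from $\liminf(-b_T)=-\limsup b_T$, together with the fact that adding a convergent sequence commutes with $\liminf$.

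We apply this with
\[
a_T=\frac{\card{A\medcap D_T}}{\theta_G(D_T)},\qquad
b_T=\frac{\card{(A\setminus B)\medcap D_T}}{\theta_G(D_T)}.
\]
By hypothesis $a_T\to\theta(A)$. The $\liminf$ of the left-hand side is $\underline{\theta}(B)$ and the $\limsup$ of $b_T$ is $\overline{\theta}(A\setminus B)$. This yields the claimed identity.

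There is no genuine obstacle. The one point to check is finiteness: $\theta(A)$ is assumed to exist as a finite limit. Consequently $0\le b_T\le a_T$ makes $\overline{\theta}(A\setminus B)$ finite as well, so no expression of the form $\infty-\infty$ arises. The case $A\setminus B=\emptyset$ is trivial, since then $b_T=0$.

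In the intended application, we take $A=\Lambda_{\pr,S,\EE_t}(\cW',\cL)$ and $B=\Lambda_{\pr,S}(\cW',\cL)$. Bounding $\overline{\theta}(A\setminus B)$ by a tail over large prime ideals will then give the matching lower bound for \Cref{lemma:density_of_primitive_points}.
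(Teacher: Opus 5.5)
Your proposal is correct and follows essentially the same route as the paper: the disjoint decomposition $A=B\sqcup(A\setminus B)$ at each scale, followed by the rule $\liminf(a_T-b_T)=\lim a_T-\limsup b_T$ for a convergent sequence $a_T$. The only difference is that the paper verifies this rule by hand. It uses a subsequence realizing $\limsup c_T$ for one inequality and superadditivity of $\liminf$ for the other. You instead cite the rule as standard, and you additionally check explicitly, via $0\le b_T\le a_T$, that $\overline{\theta}(A\setminus B)$ is finite, which the paper uses only implicitly.
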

Observe that for   $B=\Lambda_{\pr,S}(\cW',\cL) $ and $A= \Lambda_{\pr,S,\EE_t}(\cW',\cL)$, we have $B \subseteq A$ and $\theta(A)$ exists by  \Cref{lemma:density_of_finite_prime_obstructions} along the averaging sequence $\{TD\}_{T>0}$. Then \Cref{lemma:aux_identity} implies the following.
\begin{corollary} \label{lemma:auxiliary_identity}
For every $t\in\N$,
\[
\underline{\theta}\bigl(\Lambda_{\pr,S}(\cW',\cL)\bigr)
=
\theta\bigl(\Lambda_{\pr,S,\EE_t}(\cW',\cL)\bigr)
-
\overline{\theta}\left(
\Lambda_{\pr,S,\EE_t}(\cW',\cL)
\setminus
\Lambda_{\pr,S}(\cW',\cL)
\right).
\]
\end{corollary}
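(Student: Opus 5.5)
The corollary follows by instantiating \Cref{lemma:aux_identity}. For completeness I would also verify that lemma directly, since the argument is one line. Fix $t\in\N$ and set
\[
A\df\Lambda_{\pr,S,\EE_t}(\cW',\cL),\qquad B\df\Lambda_{\pr,S}(\cW',\cL).
\]

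First I check the hypotheses. The inclusion $B\subseteq A$ is immediate from the definitions. If $\gcd(\vx-\vs)=1$ for every $\vs\in S$, then no prime ideal $\fp$ contains all coordinates of $\vx-\vs$. In particular $\vx-\vs\notin\fp^d$ for every $\fp\in\EE_t$.

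Existence of $\theta(A)$ along $\{TD\}_{T>0}$ is exactly \Cref{lemma:density_of_finite_prime_obstructions}. That lemma also covers the degenerate case where some $s(\fp)=N(\fp)^d$: then $A=B=\emptyset$, every density is $0$, and the identity holds trivially. So the nonemptiness assumption in \Cref{lemma:aux_identity} costs nothing.

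To justify \Cref{lemma:aux_identity}, I use the disjoint decomposition $A=B\sqcup(A\setminus B)$. For every $T$ it gives
\[
\frac{\card{B\medcap TD}}{\vol(TD)}=\frac{\card{A\medcap TD}}{\vol(TD)}-\frac{\card{(A\setminus B)\medcap TD}}{\vol(TD)}.
\]
Write $a_T$ and $c_T$ for the two terms on the right. Since $a_T$ converges to $\theta(A)$, we have
\[
\liminf_{T\to\infty}(a_T-c_T)=\theta(A)-\limsup_{T\to\infty}c_T.
\]
This uses the elementary fact that $\liminf(-c_T)=-\limsup c_T$, and that adding a convergent sequence shifts a $\liminf$ by its limit. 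The result is exactly the claimed identity for $\underline{\theta}(B)$. All quantities are finite, because every set involved is contained in the uniformly discrete set $\Lambda$, which has finite density.

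There is no real obstacle here. The only points needing care are that the identity holds for each fixed $t$, and that in this corollary we do not yet need $\overline{\theta}(A\setminus B)\to0$ as $t\to\infty$. That tail estimate, presumably a sieve bound over prime ideals above large rational primes, is what will subsequently yield the matching lower bound in \Cref{lemma:density_of_primitive_points}.
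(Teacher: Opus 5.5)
Your proposal is correct and follows the paper's route exactly. You take $A=\Lambda_{\pr,S,\EE_t}(\cW',\cL)\supseteq B=\Lambda_{\pr,S}(\cW',\cL)$, get existence of $\theta(A)$ from \Cref{lemma:density_of_finite_prime_obstructions}, and use the decomposition $A=B\sqcup(A\setminus B)$ as in \Cref{lemma:aux_identity}; your remark that a $\liminf$ shifts by the limit of a convergent summand is just a compressed form of the paper's two-inequality subsequence argument. Because your direct argument never uses nonemptiness, it also covers the case $B=\emptyset\neq A$ (possible when $s(\fp)=N(\fp)^d$ for some $\fp\notin\EE_t$), which your degenerate-case remark does not address but which is therefore harmless.
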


\begin{proof}[Proof of \Cref{lemma:aux_identity}]
Because $B \subseteq A$, we can decompose $A$ into the disjoint union
\[
A = B \sqcup (A \setminus B).
\]
For $T>0$, we have
\[
\frac{
\card{B  \medcap D_T}
}{\vol(D_T)} = 
\frac{
\card{A  \medcap D_T}
}{\vol(D_T)} - 
\frac{
\card{(A \setminus B)  \medcap D_T}
}{\vol(D_T)}.
\]
Let
\[
a_T\df \frac{
\card{A  \medcap D_T}
}{\vol(D_T)}
\quad
\text{and}
\quad
c_T \df \frac{
\card{(A \setminus B)  \medcap D_T}
}{\vol(D_T)}.
\]
Let $T_n$ be a sequence such that $T_n\xrightarrow[]{n\to \infty} \infty$ and
\[
\lim_{n\to\infty}c_{T_n} = \limsup_{T \to \infty}c_{T}.
\]
Hence $\displaystyle\lim_{n\to \infty}a_{T_n} = \theta(A)$.
On one hand
\[
\theta(A)-\limsup_{T\to\infty}c_T
= 
\lim_{n\to\infty}a_{T_n} 
-
\lim_{n\to\infty}c_{T_n}
=
\lim_{n\to\infty}(a_{T_n}-c_{T_n}) 
\ge 
\liminf_{T\to\infty}(a_{T}-c_{T}).
\]
On the other hand
\[
\liminf_{T\to\infty}(a_{T}-c_{T}) \ge 
\liminf_{T\to\infty}a_{T} + \liminf_{T\to\infty}(-c_T) = \theta(A)-\limsup_{T\to\infty}c_T.
\]
Combining the previous inequalities we obtain $\underline{\theta}(B) = \theta(A) - \overline{\theta}(A \setminus B).$
\end{proof}

We have the following intermediate result.

\begin{lemma} \label{lemma:tail_prime_obstructions_have_zero_density}
Under the hypotheses of \Cref{lemma:density_of_primitive_points},
\[
\lim_{t\to\infty}
\overline{\theta}\left(
\Lambda_{\pr, S,\EE_t}(\cW',\cL)
\setminus
\Lambda_{\pr,S}(\cW',\cL)
\right)
=
0.
\]
\end{lemma}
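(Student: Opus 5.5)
The plan is a union bound over the prime ideals above the large rational primes, split into a window-independent "medium primes" range and a "huge primes" range. A point $\vx$ in $\Lambda_{\pr,S,\EE_t}(\cW',\cL)\setminus\Lambda_{\pr,S}(\cW',\cL)$ satisfies $\vx-\vs\in\fp^d$ for some $\vs\in S$ and some prime ideal $\fp\notin\EE_t$. Since $\cO_K$ is a PID, we have $\fp=(\pi)$, and we may choose the generator in $\bbP$, so that $1<\pi<\lambda$ and hence $|\pi|$ is bounded. Thus
\[
\vx\in \bigcup_{\vs\in S}\bigcup_{\fp\notin\EE_t}\bigl(\vs+\pi\,\cO_K^d\bigr)\medcap \Lambda(\cW',\cL),
\]
and it suffices to bound, for each fixed $\vs$, the count
\[
\card{\{\vx\in\Lambda(\cW,\cL)\medcap TD : \vx-\vs\in\pi\cO_K^d\}},
\]
uniformly in $\pi$, and then sum over $\pi$.

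For a fixed $\pi$, write $\vx=\vs+\pi\vz$ with $\vz\in\cO_K^d$. The physical coordinate then satisfies $\vz\in\pi^{-1}(TD-\vs)$, a set of volume about $T^d\vol(D)/\pi^d$. The internal coordinate satisfies $\sigma(\vz)\in\sigma(\pi)^{-1}(\cW-\sigma(\vs))$, a set of volume about $\vol(\cW)\,|\sigma(\pi)|^{-d}$. The count is therefore the number of points of $\cL$ in a box-like convex region of $\R^{2d}$ of volume about $T^d N(\fp)^{-d}$. To handle both regimes at once, I would use a lattice-point bound in the form
\[
\card{\cL\medcap C}\ll \vol(C)+(\text{lower-order projection terms})
\]
for convex $C$; alternatively, use the crude bound
\[
\card{\cO_K^d\medcap(A\times B)}\le \card{\cO_K\medcap(A_1\times B_1)}\cdots
\]
coordinatewise. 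In one dimension, a point $z\in\cO_K$ with $|z|\le X$ and $|\sigma(z)|\le Y$ can be counted by $\ll XY+1$, because $f(\cO_K)$ is a lattice in $\R^2$ and the region is a rectangle. Taking the product over coordinates gives a bound of
\[
\ll \bigl(T/\pi+1\bigr)^d\bigl(1/|\sigma(\pi)|+1\bigr)^d\cdot(\text{const}).
\]

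The main obstacle is the range of large $N(\fp)$. The product bound is only effective when $T/\pi\gg1$, and the sum $\sum_{\fp}N(\fp)^{-d}$ converges because $d\ge2$. Since $\pi<\lambda$ is bounded, the quantity $T/\pi$ is always large, and $|\sigma(\pi)|=N(\fp)/\pi\asymp N(\fp)$. Consequently, when $N(\fp)$ is large, the internal box has radius $\ll 1/N(\fp)<1$ in every coordinate, while the physical box has radius $\asymp T$. Counting coordinatewise, each coordinate $z$ satisfies $|z|\ll T$ and $|\sigma(z)|\ll 1/N(\fp)$. The estimate $XY+1$ gives $\ll T/N(\fp)+1$, and the $+1$ is the problem. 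To handle it, I would note that any $z\ne0$ satisfies $|N(z)|\ge1$, so $|z|\ge N(\fp)/c$. Moreover, the nonzero points in that thin strip are spaced along the strip with gaps $\gg N(\fp)$, by the lattice structure of $f(\cO_K)$, so the count is $\ll T/N(\fp)+1$ where the $+1$ accounts only for $z=0$. Treating separately the vectors $\vz$ with some vanishing coordinates (these lie on lower-dimensional sublattices and contribute $\ll \sum_{j<d}(T/N(\fp))^j$), the total is
\[
\ll (T/N(\fp))^d+\sum_{1\le j<d}(T/N(\fp))^{j}+1_{\vz=0}.
\]
The term $\vz=0$ gives the single point $\vx=\vs$, which is excluded anyway. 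One also notes that $\vx\in TD$ forces $N(\fp)\ll T$ whenever $\vz\ne0$, so only $\fp$ with $N(\fp)\ll T$ contribute.

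Summing over $\fp\notin\EE_t$ with $N(\fp)\ll T$ gives
\[
T^d\sum_{N(\fp)>p_t}N(\fp)^{-d}+O\Bigl(\sum_{j<d}T^j\sum_{N(\fp)\ll T}N(\fp)^{-j}\Bigr).
\]
The second term is $O(T^{d-1}\log T)$, hence $o(T^d)$; for $j=1$ this uses $\sum_{N(\fp)\le T}N(\fp)^{-1}\ll\log\log T$, and for $j=0$ it uses the count of prime ideals with $N(\fp)\le T$, which is $\ll T$. Dividing by $\vol(TD)=T^d\vol(D)$ and multiplying by $\card{S}$ yields
\[
\overline{\theta}\ll\card{S}\sum_{N(\fp)>p_t}N(\fp)^{-d},
\]
which is the tail of a convergent series because $d\ge2$ and $\zeta_{\cO_K}(d)<\infty$. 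This tends to $0$ as $t\to\infty$, proving \Cref{lemma:tail_prime_obstructions_have_zero_density}. I expect the careful uniform counting in thin internal strips to be the step needing the most care.
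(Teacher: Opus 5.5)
Your argument is correct, but the key counting step differs from the paper's.

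\textbf{The paper's route.} It uses the same covering of $A_t$ by the sets $\vs+\Lambda(\cW'-\sigma(\vs),\cL_{\fp})$ with $\fp\notin\EE_t$ and $N(\fp)\le cT$. It then imports two external counting results:
\begin{itemize}
\item \cite[Lemma~5.4]{gringlaz_error_2025} for $N(\fp)\le T$, giving $N_{\vs,\fp}(T)\le C_0T^d/N(\fp)^d+C_1(T/N(\fp))^{d-1/2}$. The error term is harmless after dividing by $T^d$, because $\sum_{\fp}N(\fp)^{-(d-1/2)}<\infty$ for $d\ge2$.
\item \cite[Lemma~4.6]{hammarhjelm_density_2022} for $N(\fp)>T$.
\end{itemize}

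\textbf{Your route.} You normalize the generator to $\pi\in\bbP$, so that $|\sigma(\pi)|\asymp N(\fp)$, and count coordinatewise in $f(\cO_K)$. The only input is $|N(w)|\ge1$ for $w\neq0$. This forces every nonzero $z$ in the strip $|\sigma(z)|\le Y$ to satisfy $|z|\ge1/Y$, and any two such points to be separated by at least $1/(2Y)\gg N(\fp)$. The argument is self-contained and uses only boundedness of $D$ and $\cW'$, not convexity. The price is that it gives only an upper bound rather than an asymptotic with an error term, but an upper bound is all this lemma needs.

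\textbf{Two remarks.}
\begin{itemize}
\item Your first justification of the one-dimensional bound $\ll XY+1$ (``because $f(\cO_K)$ is a lattice and the region is a rectangle'') is false for general planar lattices. Likewise, the intermediate product $(T/\pi+1)^d(1/|\sigma(\pi)|+1)^d$ is not what the coordinatewise count gives. What makes the estimate correct is the norm/separation argument you supply afterwards. A generic lattice-point bound with projection terms would not suffice here, since the region is long in the physical directions and its projections can have volume comparable to $T^d$.
\item The lower-order terms are unnecessary. A nonzero $\vz$ exists only if $T/N(\fp)\gg1$, and then $(1+cT/N(\fp))^d-1\ll(T/N(\fp))^d$. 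So you get directly the uniform bound $N_{\vs,\fp}(T)\ll(T/N(\fp))^d$, which vanishes for $N(\fp)\gg T$, and the $\log\log T$ estimate can be dropped.
\end{itemize}
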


The proof of \Cref{lemma:density_of_primitive_points} follows from \Cref{lemma:tail_prime_obstructions_have_zero_density} in combination with Lemma \ref{lemma:density_of_finite_prime_obstructions} and \Cref{lemma:auxiliary_identity}.

\begin{proof}[Proof of \Cref{lemma:tail_prime_obstructions_have_zero_density}]
Define
\[
A_t
\df
\Lambda_{\pr, S,\EE_t}(\cW',\cL)
\setminus
\Lambda_{\pr,S}(\cW',\cL).
\]
Thus, $A_t$ consists of those points that satisfy the primitivity conditions for every $\vs\in S$ and every prime ideal $\fp\in \EE_t$. However, they fail the primitivity condition for at least one pair $(s,\mathfrak p)$, where $\vs\in S$ and $\mathfrak p\notin \EE_t$.

For $g\in\cO_K$, let $a_g$ be the diagonal matrix
\[
a_g
\df
\operatorname{diag}(
\underbrace{g,\ldots,g}_{\text{$d$ times}},
\underbrace{\sigma(g),\ldots,\sigma(g)}_{\text{$d$ times}}
),
\]
and define
\begin{equation*} %\label{eq:lattice_L_g}
\cL_{(g)}
\df
a_g\cL
=
\{(g\vx,\sigma(g\vx)):\vx\in\cO_K^d\}.
\end{equation*}
Since $\cO_K$ is a ring, $\cL_{(g)}\subseteq\cL$, and
\begin{equation*} %\label{eq:covolume_g}
\covol(\cL_{(g)})
=
|N(g)|^d\covol(\cL).
\end{equation*}
As $\cO_K$ is a principal ideal domain, if $\fp=(g)$ we write
$\cL_{\fp}$ for $\cL_{(g)}$.
We claim that
\begin{equation} \label{eq:tail_obstruction_cover}
A_t
\subseteq
\bigcup_{\vs\in S}
\bigcup_{\substack{\fp\in\cP_{\cO_K}\\ \fp\notin \EE_t}}
\left(
\vs+\Lambda(\cW'-\sigma(\vs),\cL_{\fp})
\right).
\end{equation}
Indeed, let $\vx\in A_t$ be an arbitrary point. Then
there exist $\vs_0\in S$ and a prime ideal $\fp\notin \EE_t$ such that
\[
 \fp 
\Bigm|
(\vx-\vs_0)
,
\]
or equivalently $\vx-\vs_0\in\fp^d$. Since
$\sigma(\vx)\in\cW'$, we have
\[
\sigma(\vx-\vs_0)\in \cW'-\sigma(\vs_0).
\]
Thus
\[
\vx\in
\vs_0+\Lambda(\cW'-\sigma(\vs_0),\cL_{\fp}),
\]
which proves \eqref{eq:tail_obstruction_cover}.
\begin{claim}{}
\label{claim:unions_of_sets}There exists a constant $c>0$, such that for every $t\in \mathbb{N}$ and every  $T > p_t^2$,
\begin{equation} \label{eq:tail_obstruction_cover_intersect_with_TD}
A_t\medcap TD
\subseteq
\bigcup_{\vs\in S}
\bigcup_{\substack{\fp\in\cP_{\cO_K}\\ \fp\notin \EE_t \\
N(\fp) \le cT}}
\left(
\vs+\Lambda(\cW'-\sigma(\vs),\cL_{\fp})\medcap TD
\right).
\end{equation}
That is, the union on the right-hand side of \eqref{eq:tail_obstruction_cover_intersect_with_TD} is taken over finitely many prime ideals $\fp \notin \EE_t$ with norm at most $cT$.
\end{claim}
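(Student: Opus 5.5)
We need to show: if $\vx\in A_t\medcap TD$ and $\fp\notin\EE_t$ divides $\vx-\vs_0$ componentwise, then we may take $N(\fp)\le cT$. The plan is to use the usual norm bound: a nonzero element of $\fp$ has norm divisible by $N(\fp)$, so its absolute norm is at least $N(\fp)$. The bound on that norm will come from two sources. The physical coordinate is controlled by $T$, since $D$ is bounded. The internal coordinate is controlled by the bounded window $\cW'\subseteq\cW$.

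First I rule out the degenerate case $\vx=\vs_0$. Points of $A_t$ lie in $\Lambda_{\pr,S,\EE_t}(\cW',\cL)$, and for every $\fp\in\EE_t$ these satisfy $\vx-\vs\notin\fp^d$ for all $\vs\in S$. Since $\zero\in\fp^d$, this gives $\vx\neq\vs$ for every $\vs\in S$. (This uses $\EE_t\neq\emptyset$, which holds for $t\ge1$.) So $\vx-\vs_0$ has some nonzero coordinate $y=x_i-s_{0,i}\in\fp$. Then $\fp\mid(y)$, hence $N(\fp)\le N((y))=|y|\,|\sigma(y)|$.

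Next I bound the two factors.
\begin{itemize}
\item For the physical factor, let $R=\sup_{\vz\in D}\|\vz\|$. Since $\vx\in TD$, we get $|x_i|\le RT$. Hence $|y|\le RT+\max_{\vs\in S}\|\vs\|$, which is at most $2RT$ once $T$ is large. Because $S$ is finite, this is uniform.
\item For the internal factor, $\sigma(\vx)\in\cW'\subseteq\cW$ by \Cref{lemma:W_s_is_contained_in_W}, and $\sigma(\vs_0)\in\cW$. Therefore $|\sigma(y)|\le 2\rho$, where $\rho=\sup_{\vw\in\cW}\|\vw\|$ is finite because $\cW$ is bounded.
\end{itemize}
Combining these gives $N(\fp)\le 4R\rho\,T$, so I will take $c=4R\rho$ (enlarged as needed).

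The only subtle point is making the constant independent of $t$ and valid for all $T>p_t^2$, not merely for $T$ large. I would handle the additive term $\|\vs\|$ by using $T>p_t^2\ge 4$, so $T$ is bounded below. Then I take
\[
c=2\rho\Bigl(R+\tfrac14\max_{\vs\in S}\|\vs\|\Bigr),
\]
which absorbs the constant into a multiple of $T$. The hypothesis $T>p_t^2$ appears to serve only this purpose; it may also be used later to make the union over $\fp\notin\EE_t$ nonempty.

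Finally I intersect \eqref{eq:tail_obstruction_cover} with $TD$ and discard every prime with $N(\fp)>cT$, which yields \eqref{eq:tail_obstruction_cover_intersect_with_TD}. Only finitely many prime ideals have norm at most $cT$, which gives the finiteness statement.
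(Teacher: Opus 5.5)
Your proposal is correct and follows essentially the same route as the paper. You pick a nonzero coordinate $y$ of $\vx-\vs_0$, use $N(\fp)\le |N(y)|=|y|\,|\sigma(y)|$, bound $|y|$ linearly in $T$ via boundedness of $D$ and finiteness of $S$, and bound $|\sigma(y)|$ via boundedness of the window; you are also slightly more careful than the paper, since you justify $\vx\neq\vs_0$ through $\EE_t\neq\emptyset$ and use $T>p_t^2\ge 4$ to absorb the additive constant into $cT$.
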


\begin{proof}[Proof of \cref{claim:unions_of_sets}]
\renewcommand{\qedsymbol}{$ \blacktriangle$}
Fix $\vs \in S$, and suppose 
$$\vx\in (\vs+\Lambda(\cW'-\sigma(\vs),\cL_{\fp}))\medcap TD,\qquad \vx\neq \vs.$$
Then $\vx - \vs \in \fp^d$. Choose a coordinate $j$ such that $x_j - s_j \ne 0$. Because $x_j - s_j \in \fp$, we have $(x_j - s_j) \subseteq \fp$, where $(x_i-s_j)$ is the ideal generated by $x_j-s_j$.
For ideals, inclusion reverses the norm inequality; hence$$N(\fp) \le N((x_j - s_j)) = \left\vert{}N(x_j - s_j)\right\vert{}.$$

Now $N(x_j - s_j) = (x_j - s_j)\sigma(x_j - s_j)$. Since $D$ is bounded and $\vx \in TD$, there is a constant $C_D > 0$ depending only on $D$, such that $\vert{}x_j\vert{} \le C_D T$. Since $S$ is finite, we obtain
\begin{equation}\label{eq.bound_of_xj_sj}
\vert{}x_j - s_j\vert{} \le c_1 T,\qquad \forall\,  1\leq j \leq d,\, \forall\, \vs\in S, 
\end{equation}
where $c_1$ is a constant independent of $\fp,\vs,T$.
Also, $\sigma(x_j - s_j) = \sigma(x_j) - \sigma(s_j)$. The window $\cW'$ is bounded. 
Thus, given $\vy= (y_1,\dots,y_d) \in \cW'-\sigma(\vs)$, there exists $c_{\vs}>0$ such that 
\[
|y_j| \le c_{\vs}, 
\quad \qquad
1\le j \le d.
\]
The constant $c_2 \df \max_{\vs \in S}\{c_{\vs}\}>0$ is  independent of $\vx,\vs, T$ and $\fp$, and satisfies
\begin{equation}
\label{eq.bound_sigma_of_xj_sj}
\vert{}\sigma(x_j - s_j)\vert{} \le c_2.
\end{equation}
Combining \eqref{eq.bound_of_xj_sj} and \eqref{eq.bound_sigma_of_xj_sj}, we obtain
$$N(\fp)\leq c_1c_2T.$$
Let $H_n$ be the number of ideals in $\mathcal{O}_K$ of norm $n$. It
follows from \cite[Theorem 39]{marcus_number_2018} that 
\begin{equation*}
%\label{equation for number of ideals} 
H_n\leq H \cdot n^{1/2},  
\end{equation*}
where $H>0$ is a constant independent of $n$.
Hence, only finitely many prime ideals can occur in \eqref{eq:tail_obstruction_cover_intersect_with_TD}. This completes the proof of the \namecref{claim:unions_of_sets}.
\end{proof}

For $\vs \in S$ and $\fp \in \cP_{\cO_K}$, write
\[
N_{\vs,\fp}(T) \df \card{ \left( \vs + \Lambda(\cW' - \sigma(\vs), \cL_{\fp}) \right) \medcap TD }.
\]
By \eqref{eq:tail_obstruction_cover_intersect_with_TD}, we have

\begin{equation}
\label{eq.A(T)cap_TD}
\card{A_t \medcap TD} \le \sum_{s \in S} \sum_{\fp \notin \EE_t} N_{s,\fp}(T),
\end{equation}
where the second sum runs over finitely many primes.
By \cite[Lemma 5.4]{gringlaz_error_2025} we have for $N(\fp)\leq T$:

\begin{equation}
\label{eq:bound_of_N(s,p,T)}
\begin{split}
N_{\vs,\fp}(T) &\le \frac{\vol(TD)\cdot \vol(\cW')}{\covol(\cL_{\fp})} + C_1 \left( \frac{T}{N(\fp)} \right)^{d-\frac{1}{2}}\\
N_{\vs,\fp}(T) &\le C_0 \frac{T^d}{N(\fp)^d} + C_1 \left( \frac{T}{N(\fp)} \right)^{d-\frac{1}{2}}\\
\frac{N_{\vs,\fp}(T)}{\vol(TD)}&\leq \frac{C_2}{N(\fp)^d} + \frac{C_3}{T^{1/2}N(\fp)^{d-\frac{1}{2}}},
\end{split} 
\end{equation}
where $C_0,C_1, C_2,C_3>0$ are constants independent of $\vs,\fp$ and $T$.
It follows from \cite[Lemma 4.6]{hammarhjelm_density_2022} (see, also \cite[Lemma 5.5]{gringlaz_error_2025}), that for prime ideals satisfying $N(\fp)>T$ 

\begin{equation}
\label{eq.When_N(P)_large}
 \frac{N_{\vs,\fp}(T)}{\vol(TD)} \le \frac{C_4}{N(\fp)^d}, 
\end{equation}
for some constant $C_4$ independent of $\vs, \fp$  and $T$.
Splitting $N_{\vs,\fp}(T)$ over sums of primes according to $N(\fp)\leq T$ or $N(\fp)>T$, we obtain
\[
N_{\vs,\fp}(T)
=
\sum_{\substack{\fp \notin \EE_t \\ N(\fp) \le T}} N_{s,\fp}(T) 
\quad + 
\sum_{\substack{\fp \notin \EE_t \\ T<N(\fp) \leq  c_1c_2T}} N_{s,\fp}(T).
\]
By \eqref{eq.A(T)cap_TD}, \eqref{eq:bound_of_N(s,p,T)} and \eqref{eq.When_N(P)_large}, there is a constant $C$ independent of $\vs$, $\fp$, and $T$, such that
$$\frac{\card{A_t \medcap TD}}{\operatorname{vol}(TD)} \le C\cdot \card{S} \sum_{\substack{\fp \notin \EE_t \\ N(\fp) \le T}} \frac{1}{N(\fp)^d} + \frac{C\cdot \card{S}}{T^{1/2}} \sum_{\substack{\fp \notin \EE_t \\ N(\fp) \le T}} \frac{1}{N(\fp)^{d-\frac{1}{2}}} + C\cdot \card{S} \sum_{\substack{\fp \notin \EE_t \\ N(\fp) > T}} \frac{1}{N(\fp)^d}. $$
Combining the first and third sums yields:
\begin{equation}
\label{eq:A(T)capTD_devided_by_vol(TD)}
\frac{\card{A_t \medcap TD}}{\vol(TD)} \le C\cdot \card{S} \sum_{\substack{\fp \in \cP_{\mathcal{O}_K} \\\fp \notin \EE_t}} \frac{1}{N(\fp)^d} + \frac{C\cdot \card{S}}{T^{1/2}} \sum_{\fp \in \cP_{\mathcal{O}_K}} \frac{1}{N(\fp)^{d-\frac{1}{2}}}.    
\end{equation}
Because $d \ge 2$, we have $d - \frac{1}{2} > 1$, and  hence, 
$$\sum_{\fp \in \cP_{\mathcal{O}_K}} \frac{1}{N(\fp)^{d-\frac{1}{2}}} < \infty.$$
Therefore, for fixed $t$ and taking $\limsup_{T\to\infty}$ the second term in \eqref{eq:A(T)capTD_devided_by_vol(TD)} vanishes. Thus, the upper density satisfies:
\begin{equation}\label{A(t)_upper_bound}
\overline{\theta}(A_t) \le C\cdot \card{S}\sum_{\substack{\fp \in \cP_{\mathcal{O}_K} \\\fp \notin \EE_t}} \frac{1}{N(\fp)^d}.    
\end{equation}
Finally, since $d \ge 2$, $\sum_{\fp \in \cP_{\mathcal{O}_K}} \frac{1}{N(\fp)^d} < \infty$. The sets $\EE_t$ exhaust $\cP_{\mathcal{O}_K}$ as $t \to \infty$,  
\begin{equation}
\label{eq:dedekind_zeta_conve}
\sum_{\substack{\fp \in \cP_{\mathcal{O}_K} \\\fp \notin \EE_t}} \frac{1}{N(\fp)^d}
\xrightarrow[t \to \infty]{} 0.    
\end{equation}
Combining \eqref{A(t)_upper_bound} and \eqref{eq:dedekind_zeta_conve}, we conclude that
$$\lim_{t\to \infty}\overline{\theta}(A_t)=0$$
proving \Cref{lemma:tail_prime_obstructions_have_zero_density}.
\end{proof}

\section{Proof of \texorpdfstring{\Cref{thm_weak_model}}{}}
Let $G=\R^d$ be the physical space, and let
\[
H= \R^d\times \prod_{\fp\in \mathcal{P}_{\mathcal{O}_K}}H_{\fp}
\]
be the internal space as in the proof of  \Cref{thm:visibility_from_set_S}.
Let $\cL_2$ be the image of the diagonal embedding
\[
\cO_K^d \xhookrightarrow{\quad} G\times H,
\quad
\vx \xhookrightarrow{\quad}
\left(
\vx, 
\sigma(\vx), 
\left(
\pi_{\fp}(\vx)
\right)_{\fp\in \cP_{\mathcal{O}_K}}
\right).
\]
Then $\cL_2$ is a lattice.
The projection of $\cL_2$ to $G$ is injective, while the projection to $H$ is dense. Moreover, $\covol(\cL_2)= \covol(\cL)$. 
Let 
\[
\cW_2
= 
\cW
\setminus
\left(
\bigcup_{\vs\in S}\cW_{\vs}
\right)
\times 
\prod_{\fp\in \cP_{\mathcal{O}_K}}\cW_{\fp},
\]
where
\[
\cW_{\fp}
\ \df \ 
\left(
\quotient{\cO_K}{\fp}
\right)^d
{\big\backslash}
\pi_{\fp}(S).
\]
We claim that
$$\Lambda_{\vis,S}= \Lambda(\cW_2,\cL_2).$$
In fact, by the definitions of $\Lambda_{\vis,S}$, $\Lambda_{\vis,\vs}$, and  \Cref{lemma:visibility_from_a_point_s}, we have
\begin{equation*}
\begin{split}
\vx\in \Lambda_{\vis,S} &\Longleftrightarrow \vx \in \Lambda_{\vis,\vs},\qquad \forall \vs \in S\\
& \Longleftrightarrow \sigma(\vx) \in \cW,\, \sigma(\vx)\notin \cW_{\vs},\, \gcd(\vx-\vs)=1,\, \qquad  \forall \vs \in S\\
&\Longleftrightarrow \sigma(\vx) \in \cW\setminus \bigcup_{\vs\in S}\cW_{\vs},\,\gcd(\vx-\vs)=1,\, \qquad  \forall \vs \in S\\
&\Longleftrightarrow \sigma(\vx) \in \cW\setminus \bigcup_{\vs\in S}\cW_{\vs},\, \pi_{\fp}(\vx) \notin \pi_{\fp}(S),\qquad \forall \fp \in \mathcal{P}_{\mathcal{O}_K}\\
&\Longleftrightarrow \vx \in \Lambda(\cW_2,\cL_2).
\end{split}   
\end{equation*}
Since $\cW_{\vs}\subset \cW$ and $S$ is finite, by  inclusion-exclusion
\begin{equation*}
\begin{split}
\theta_H(\cW_2)&= \vol\left(\cW\setminus \left(\bigcup_{\vs \in S}\cW_{\vs}\right)\right) \cdot \prod_{\fp \in \cP_{\mathcal{O}_K}} \left(1-\frac{s(\fp)}{N(\fp)^d}\right)\\  
&= \left(\vol(\cW)- \sum_{\varnothing \neq I\subseteq S}(-1)^{\card{I}+1}\vol\left(\medcap_{\vs\in I}\cW_{\vs}\right)\right)\cdot \prod_{\fp \in \cP_{\mathcal{O}_K}} \left(1-\frac{s(\fp)}{N(\fp)^d}\right)\\
&= \left(\vol(\cW)- \sum_{\varnothing \neq I\subseteq S}(-1)^{\card{I}+1}\vol\left(\cW_{I}\right)\right)\cdot \prod_{\fp \in \cP_{\mathcal{O}_K}} \left(1-\frac{s(\fp)}{N(\fp)^d}\right).
\end{split}    
\end{equation*}
Since $\dens(\cL)= \dens(\cL_2)$, by  \Cref{thm:visibility_from_set_S}, we get
$$\theta(\Lambda_{\vis, S})= \theta(\Lambda(\cW_2,\cL_2))= \dens(\cL_2)\theta_H(\cW_2).$$
The assumption $\theta(\Lambda_{\vis, S})>0$ implies $\theta_H(\cW_2)>0$. Since $\cW_2$ is relatively compact and $\theta_H(\cW_2)>0$, the set $\Lambda_{\vis,S}= \Lambda(\cW_2,\cL_2)$ is a weak model set of maximal density.

\section{Proof of Theorem \ref{thm:generic_density_simul_vis}} 
\label{section:random}
In this section we will discuss simultaneous visibility in $\mu$-almost every cut-and-project set. At the end we present the proof of \Cref{thm:generic_density_simul_vis}.

\subsection{Auxiliary results}
Let $n=d+m$, where $d\geq 2$ and $m\geq 1$. Let $\cL\subset \R^n$ be a lattice and let $\cW$ be a bounded convex window with nonempty interior. Assume that $\Lambda(\cW,\cL)$ is an irreducible cut-and-project set.
Since $\pi_{\phys}$ is injective, for each $\vs\in S$, we have a unique lift $\tilde{\vs}\in \cL$, such that $\pi_{\phys}(\tilde{\vs})= \vs$.
Let
\[
\tilde{S}=\{\tilde{\vs} \in \cL \,:\, \pi_{\phys}(\tilde{\vs})\in S\}.
\]
Define
\[
\cL_{\vis, \tilde{S}}\df \{\vy \in \cL \setminus \tilde{S}\, :\, \vy  \textrm{ is visible from every}\, \tilde{\vs} \in \tilde{S}\},
\]
and
\[
\Lambda\!\left(\cW,\cL_{\vis,\tilde{S}}\right)\df \{\pi_{\phys}(\vy)\, :\, \vy \in \cL_{\vis,\tilde{S}},\, \pi_{{\internal}}(\vy)\in \cW\}.
\]
\begin{lemma}
\label{lemma:projection_of_visible_points}
Let $\cL$ be a lattice in $\mathbb{R}^n$, and $\cW$ be a bounded convex window with nonempty interior.  Then
\begin{equation}
\label{eq:inclusion_lemma}
\Lambda(\cW,\cL)_{\vis, S}\subseteq \Lambda\!\left(\cW,\cL_{\vis,\tilde{S}}\right),
\end{equation}
for every finite set $S\subset \Lambda(\cW,\cL)$.
\end{lemma}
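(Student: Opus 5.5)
We argue by contrapositive: if a point $\vx\in\Lambda(\cW,\cL)\setminus S$ has lift $\tilde\vx\in\cL$ that is \emph{not} in $\cL_{\vis,\tilde S}$, we show $\vx\notin\Lambda(\cW,\cL)_{\vis,S}$. Equivalently, we fix $\vx\in\Lambda(\cW,\cL)_{\vis,S}$ and check that its unique lift $\tilde\vx=(\pi_{\phys}|_{\cL})^{-1}(\vx)$ lies in $\cL_{\vis,\tilde S}$. First, $\tilde\vx\notin\tilde S$: otherwise $\vx=\pi_{\phys}(\tilde\vx)\in S$, contradicting $\vx\in\Lambda\setminus S$. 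Second, $\pi_{\internal}(\tilde\vx)\in\cW$ because $\vx\in\Lambda(\cW,\cL)$. Hence $\vx=\pi_{\phys}(\tilde\vx)$ has the required form, and it remains to show that $\tilde\vx$ is visible in $\cL$ from every $\tilde\vs\in\tilde S$.

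Suppose, for contradiction, that some $\tilde\vs\in\tilde S$ and $t\in(0,1)$ satisfy $\vz\df t\tilde\vs+(1-t)\tilde\vx\in\cL$. The key step uses convexity of the window. Since $\vs\in S\subset\Lambda(\cW,\cL)$, we have $\pi_{\internal}(\tilde\vs)\in\cW$, and we have already seen $\pi_{\internal}(\tilde\vx)\in\cW$. By linearity of $\pi_{\internal}$,
\[
\pi_{\internal}(\vz)=t\,\pi_{\internal}(\tilde\vs)+(1-t)\,\pi_{\internal}(\tilde\vx)\in\cW .
\]
Therefore $\pi_{\phys}(\vz)\in\Lambda(\cW,\cL)$. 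By linearity of $\pi_{\phys}$, we also have $\pi_{\phys}(\vz)=t\vs+(1-t)\vx$. This exhibits a point of $\Lambda$ in the interior of the segment from $\vs$ to $\vx$, contradicting $\vx\in\Lambda(\cW,\cL)_{\vis,S}$.

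One detail needs care: the projected point must be a genuine interior point. The segment from $\vs$ to $\vx$ is nondegenerate because $\vx\ne\vs$ (as $\vx\notin S$), so $t\vs+(1-t)\vx$ with $t\in(0,1)$ lies strictly inside it. This holds even though injectivity of $\pi_{\phys}|_{\cL}$ is used only to define the lifts. With that, the argument is routine. The only substantive ingredient is the convexity of $\cW$; boundedness and nonempty interior play no role in this inclusion.
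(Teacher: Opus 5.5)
Your proof is correct and matches the paper's argument: lift $\vx$ to its unique preimage $\tilde\vx\in\cL$, then use convexity of $\cW$ to see that any lattice point strictly between $\tilde\vs$ and $\tilde\vx$ projects into $\cW$, and hence to a point of $\Lambda(\cW,\cL)$ strictly between $\vs$ and $\vx$. The paper phrases this as a contrapositive rather than a direct verification, and, as you say, only convexity of $\cW$ (not boundedness or nonempty interior) is used.
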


\begin{proof}
Let $\vx\in \Lambda(\cW,\cL)\setminus S$ be arbitrary. It is enough to show that if $\vx\notin \Lambda\!\left(\cW,\cL_{\vis,\tilde{S}}\right)$ then $\vx\notin \Lambda(\cW,\cL)_{\vis,S}$.  Since $\vx\in \Lambda(\cW,\cL)\setminus S$, there exists a unique $\vy \in \cL$ and $\vy\neq \tilde{\vs}$ for all $\tilde{\vs}\in \tilde{S}$, such that
\[
\vx= \pi_{\phys}(\vy) \qquad\text{and}\qquad \pi_{{\internal}}(\vy)\in \cW.
\]
The assumption $\vx\notin \Lambda\!\left(\cW,\cL_{\vis,\tilde{S}}\right)$ implies that $\vy \notin \cL_{\vis, \tilde{S}}$. Hence there exists $\tilde{\vs}_0\in \tilde{S}$, $\tilde{\vs}_0 \neq \vy$ and $t\in (0,1)$ such that
\[
\vy'= t \vy + (1-t)\tilde{\vs}_0\in \cL.
\]
Since $\pi_{\internal}(\vy), \pi_{\internal}(\tilde{\vs}_0)\in \cW$, and the window $\cW$ is convex, we have
\[
\pi_{{\internal}}(\vy')= t \pi_{\internal}(\vy)+ (1-t)\pi_{\internal}(\tilde{\vs}_0) \in \cW.
\]
Therefore
$$\pi_{\phys}(\vy')\in \Lambda(\cW,\cL),$$
and 
\[
\pi_{\phys}(\vy')= t\pi_{\phys}(\vy)+ (1-t)\pi_{\phys}(\tilde{\vs}_0)= t\vx+ (1-t)\vs_0.
\]
Since $0<t<1$, the point $\pi_{\phys}(\vy')$ lies in the open line segment $(\vs_0,\vx)$.
Hence
\[
\vx\notin \Lambda(\cW,\cL)_{\vis,S}.
\]
\end{proof}

\begin{lemma}
\label{lemma:condition_for_equality}
Let  $\cL \subset \R^{d+m}$ be  a lattice, where $d\geq 2$, $m \ge 1$, and let $\cW \subset \R^m$ be a bounded convex window with nonempty interior. 
If \eqref{eq:inclusion_lemma} is a strict inclusion,
\[
\Lambda(\cW,\cL)_{\vis, S}\subsetneq \Lambda\!\left(\cW,\cL_{\vis,\tilde{S}}\right)
\!,
\]
then there exist linearly independent vectors $\vv_1,\vv_2\in \cL$ such that
\[
\dim\left(
\mathrm{span}_\R(\vv_1,\vv_2)
\medcap 
(
\{\zero\}
\times
\mathbb{R}^m
)
\right)= 1.
\]
\end{lemma}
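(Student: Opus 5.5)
Take a point in the difference set and unwind what it means to be visible from every $\tilde{\vs}$ in the lattice while failing to be visible from some $\vs$ in the cut-and-project set. Concretely, let $\vx\in \Lambda(\cW,\cL_{\vis,\tilde S})\setminus \Lambda(\cW,\cL)_{\vis,S}$. Write $\vx=\pi_{\phys}(\vy)$ with $\vy\in\cL_{\vis,\tilde S}$ and $\pi_{\internal}(\vy)\in\cW$. Since $\vx\notin S$ (as $\vy\notin\tilde S$ and $\pi_{\phys}|_{\cL}$ is injective), failure of visibility gives some $\vs_0\in S$ and a point $\vz'\in\Lambda(\cW,\cL)$ on the open segment $(\vs_0,\vx)$. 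Lift $\vz'$ to $\vz\in\cL$, so that $\pi_{\phys}(\vz)=t\vx+(1-t)\vs_0$ for some $t\in(0,1)$.

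The key comparison is between $\vz$ and the lattice point $\vw\df t\vy+(1-t)\tilde{\vs}_0\in\R^n$ lying on the segment $(\tilde{\vs}_0,\vy)$. Both have the same physical projection. We have $\vz\neq \vw$: if $\vz=\vw$, then $\vw\in\cL$ would be a lattice point strictly between $\tilde{\vs}_0$ and $\vy$, contradicting $\vy\in\cL_{\vis,\tilde S}$. Hence $\vz-\vw$ is a nonzero vector in $\{\zero\}\times\R^m$.

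Now set $\vv_1\df \vy-\tilde{\vs}_0\in\cL$ and $\vv_2\df \vz-\tilde{\vs}_0\in\cL$. We have $\vv_2-t\vv_1=\vz-\vw\in\{\zero\}\times\R^m\setminus\{\zero\}$, which lies in $\operatorname{span}_\R\{\vv_1,\vv_2\}$. To see that $\vv_1,\vv_2$ are linearly independent, suppose $\vv_2=c\vv_1$. Then $\vz-\vw=(c-t)\vv_1$ with $c\neq t$. Since $\pi_{\phys}(\vz-\vw)=\zero$, this forces $\pi_{\phys}(\vv_1)=\zero$, i.e.\ $\vx=\vs_0$, which is impossible because $\vx\notin S$. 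Consequently the span is two-dimensional and meets $\{\zero\}\times\R^m$ nontrivially.

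The main point to check is that this intersection has dimension exactly $1$ rather than $2$. If it had dimension $2$, the whole span would lie in $\{\zero\}\times\R^m$, giving $\pi_{\phys}(\vv_1)=\zero$ and again $\vx=\vs_0$, a contradiction. Thus the dimension equals $1$. Apart from this, the only care needed is bookkeeping of lifts through the injectivity of $\pi_{\phys}|_{\cL}$. Convexity of $\cW$ is not actually needed in this direction; it was used only for the inclusion in \Cref{lemma:projection_of_visible_points}.
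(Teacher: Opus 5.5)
Your proposal is correct and is essentially the paper's proof: it uses the same vectors $\vv_1=\vy-\tilde{\vs}_0$ and $\vv_2=\vz-\tilde{\vs}_0$. Linear independence comes from the visibility of $\vy$ from $\tilde{\vs}_0$ in $\cL$, and the dimension count from $\pi_{\phys}(\vv_1)=\vx-\vs_0\neq\zero$. The only cosmetic difference is that you exhibit the nonzero kernel vector $\vz-\vw$ explicitly, whereas the paper applies rank--nullity to $\pi_{\phys}$ restricted to $\operatorname{span}_{\R}\{\vv_1,\vv_2\}$.
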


\begin{lemma}
\label{lemma:almost_every_simultanes_vis}
Let $n=d+m$ for positive integers $d \ge 2$ and $m \ge 1$. Suppose the window $\cW \subset \R^m$ is a bounded convex window with nonempty interior. Let $\haar$ denote the Haar measure on $\GL_n(\R)$. Then for $\haar$-a.e. $g\in \GL_n(\R)$, we have
\begin{equation*}
\Lambda(\cW, (g\Z^n)_{\vis,\tilde{S}}) = \Lambda(\cW,g\mathbb{Z}^n)_{\vis,S}.
\end{equation*}
For every finite set $S \subset\Lambda(\cW,g\Z^n)$ and $\tilde{S} \df \left(\pi_{\phys}\middle|_{g\Z^n}\right)^{-1}\!(S)$. In particular, for $\mu$-a.e. cut-and-project set $\Lambda(\cW,\cL)$, we have
$$\Lambda(\cW,\cL_{\vis,\tilde{S}})= \Lambda(\cW,\cL)_{\vis,S}.$$
\end{lemma}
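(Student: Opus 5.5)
The plan is to reduce the statement to \Cref{lemma:condition_for_equality}. We then show that the exceptional configuration it produces occurs only for a Haar-null set of $g\in\GL_n(\R)$.

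Since $\cL=g\Z^n$ is countable, the set of pairs of linearly independent vectors $\vv_1,\vv_2\in g\Z^n$ is parameterized by the countable set of pairs $(\vu_1,\vu_2)\in(\Z^n)^2$ that are linearly independent, via $\vv_i=g\vu_i$. By \Cref{lemma:projection_of_visible_points}, the inclusion $\Lambda(\cW,g\Z^n)_{\vis,S}\subseteq \Lambda(\cW,(g\Z^n)_{\vis,\tilde S})$ always holds. By \Cref{lemma:condition_for_equality}, if it is strict for some finite $S$, then some such pair satisfies $\dim(\operatorname{span}_\R\{g\vu_1,g\vu_2\}\medcap(\{\zero\}\times\R^m))\geq 1$. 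It therefore suffices to show that for each fixed independent pair $(\vu_1,\vu_2)$, the set
\[
E_{\vu_1,\vu_2}\df\{g\in\GL_n(\R): \operatorname{span}_\R\{g\vu_1,g\vu_2\}\medcap(\{\zero\}\times\R^m)\neq\{\zero\}\}
\]
is Haar-null. A countable union of null sets is null, and the resulting full-measure set of $g$ works simultaneously for every finite $S$. This is important because $S$ ranges over uncountably many choices only through subsets of the countable set $\Lambda$, so no measure-theoretic issue arises.

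The main step is the nullity of $E_{\vu_1,\vu_2}$. The condition says there exist $(a,b)\neq(0,0)$ with $\pi_{\phys}(g(a\vu_1+b\vu_2))=\zero$. Equivalently, the $d\times 2$ matrix $M(g)=\bigl(\pi_{\phys}(g\vu_1)\ \pi_{\phys}(g\vu_2)\bigr)$ has rank at most $1$. Since $d\geq2$, this means all $2\times2$ minors of $M(g)$ vanish, in particular the top one. That minor is a polynomial in the entries of $g$. It is not identically zero: choose $g$ mapping $\vu_1,\vu_2$ to $e_1,e_2$, which is possible since they are independent. The zero set of a nonzero polynomial on the open set $\GL_n(\R)\subset\R^{n^2}$ has Lebesgue measure zero. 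Haar measure on $\GL_n(\R)$ is absolutely continuous with respect to Lebesgue measure, having density $|\det g|^{-n}$, so $E_{\vu_1,\vu_2}$ is Haar-null. This is also where the hypothesis $d\geq2$ is used.

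For the ``in particular'' statement, we pass to $\mu$. Every covolume-one lattice can be written as $g\Z^n$ with $g\in\SL_n(\R)$. The set $\bigcup E_{\vu_1,\vu_2}$ is invariant under right multiplication by $\GL_n(\Z)$, so it descends to a subset of $\scrX_n$. The same polynomial argument shows that its intersection with $\SL_n(\R)$ is null, since the minor is still a nonzero polynomial on the irreducible variety $\SL_n(\R)$: the witness $g$ can be rescaled to have determinant $1$. Hence the subset is $\haar_{\scrX_n}$-null, and pushing forward by $\Psi$ gives the claim for $\mu$-a.e.\ $\Lambda(\cW,\cL)$.

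The only delicate point I expect is this transfer to $\SL_n(\R)$ and $\scrX_n$. A nonzero polynomial restricted to the connected analytic manifold $\SL_n(\R)$ vanishes on a null set with respect to Haar measure, which is a smooth positive density in local coordinates. This follows from real-analyticity: an analytic function that is not identically zero on a connected manifold has a null zero set.
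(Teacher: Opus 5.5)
Your proposal is correct and follows the paper's route. You reduce via \Cref{lemma:condition_for_equality} to the genericity statement that no two independent vectors of $g\Z^n$ span a plane meeting $\{\zero\}\times\R^m$; the paper simply imports this from the proof of \cite[Proposition~3.3]{gringlaz_error_2025}, and you verify it directly as a countable union of zero sets of a nonvanishing $2\times2$ minor. One small fix: before rescaling your witness into $\SL_n(\R)$, choose it with $\det g>0$ (possible since $n\geq3$), or note that the exceptional set is invariant under $g\mapsto cg$, so nullity in $\GL_n^{+}(\R)\cong\R_{>0}\times\SL_n(\R)$ transfers to $\SL_n(\R)$.
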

\begin{proof}
The proof is analogous to \cite[Proposition~3.3]{gringlaz_error_2025} in combination with \Cref{lemma:condition_for_equality}. 
\end{proof}

\begin{proof}[Proof of \Cref{lemma:condition_for_equality}]
Assume that the inclusion in \eqref{eq:inclusion_lemma} 
is strict. Thus, there exists a point
\[
\vx_1 \in \Lambda\!\left(\cW,\cL_{\vis,\tilde{S}}\right)\!\setminus \Lambda(\cW,\cL)_{\vis,S},
\]
such that $\vx_1\notin S$.
By definition of $\Lambda\!\left(\cW,\cL_{\vis,\tilde{S}}\right)$, there exists $\vy_1\in \cL_{\vis,\tilde{S}}$ such that
$$\pi_{\phys}(\vy_1)=\vx_1, \qquad \pi_{\internal}(\vy_1)\in \cW.$$
Since $\vy_1\in \cL_{\vis,\tilde{S}}$, the point $\vy_1$ is visible from every point $\tilde{\vs}\in \tilde{S}$. On the other hand, $\vx_1 \notin \Lambda(\cW,\cL)_{\vis, S}$, so there exist $\vs_0\in S$ and $t\in (0,1)$ such that
\begin{equation}\label{eq: projection_of_y2}
\vx_2\df t\vx_1 + (1-t)\vs_0\in \Lambda(\cW,\cL).    
\end{equation}
Since $\vx_2 \in \Lambda(\cW,\cL)$, there exists $\vy_2 \in \cL$, such that $\pi_{\phys}(\vy_2)=\vx_2$.
By \eqref{eq: projection_of_y2}, we have
\[
\pi_{\phys}(\vy_2-\tilde{\vs}_0)= t\pi_{\phys}(\vy_1- \tilde{\vs}_0).
\]
Define
\[
\vv_1\df \vy_1-\tilde{\vs}_0,\qquad \vv_2\df \vy_2-\tilde{\vs}_0.
\]
Thus, we have $\vv_1,\vv_2\in \cL$ and $\pi_{\phys}(\vv_2)= t\pi_{\phys}(\vv_1)$. Since $\vx_1 \neq \vs_0$, we have 
\begin{equation}
\label{eq:non_trivial_projection}
\pi_{\phys}(\vv_1)= \vx_1-\vs_0\neq \zero.
\end{equation}
Hence $\vv_1\neq \zero$ and $\vv_2\neq \zero$. The vectors $\vv_1,\vv_2$ are linearly independent. In fact, if $\vv_2= c\vv_1$ for some $c\in \R$,  applying $\pi_{\phys}$ gives
$c=t$ and  $\vv_2= t\vv_1$, implying 
\begin{equation*}
\begin{split}
\vy_2- \tilde{\vs}_0 &= t(\vy_1- \tilde{\vs}_0)\\
\vy_2&= t \vy_1 + (1-t)\tilde{\vs}_0,
\end{split}    
\end{equation*}
which contradicts that $\vy_1\in \cL_{\vis,\tilde{S}}$. 

Therefore, letting
\[
U\df \, {\mathrm{span}}_{\mathbb{R}}(\vv_1,\vv_2) \subset \R^{d+m},
\]
we have that 
\[
\dim U=2.
\]
By \eqref{eq:non_trivial_projection} we obtain that $\dim(\pi_{\phys}(U))=1$, and since the kernel of $\pi_{\phys}$ is the space $\{\zero\} \times \mathbb{R}^m$ then,
\[
\dim(\mathrm{span}_\R(\vv_1,\vv_2)\medcap (\{\zero\}\times\mathbb{R}^m))=\dim {\rm ker}(\pi_{\phys}|_{U})=1.
\]
\end{proof}

\begin{lemma}
\label{lemma:density_cps_latt_vis}
Let $n=d+m$ for positive integers $d \ge 2$, $m \ge 1$, and $\cL\subset \R^n$ be a lattice. Let $\tilde{S} \subset \cL$ be a finite set and 
let $\cW \subset \R^m$ be a bounded convex window of positive Lebesgue measure. Suppose that $\Lambda(\cW,\cL)$ is an irreducible cut-and-project set.
Then the density of 
$\Lambda\!\left(\cW,\cL_{\vis,\tilde{S}}\right)$ 
along $\{TD\}_{T>0}$ exists and it is equal to
\[
\theta\!\left(
\Lambda\!\left(
\cW,\cL_{\vis,\tilde{S}}
\right)
\right) 
= 
\frac{\vol{(\cW)}}{\covol{(\cL)}} \prod_{p \in \cP}\left(1- \frac{s_{\cL}(p)}{p^n} \right),
\]
where $D$ is a bounded convex set with positive measure.
\end{lemma}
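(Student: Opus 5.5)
We will mimic the proof of \Cref{lemma:density_of_primitive_points}, with $\Z$ in place of $\cO_K$ and the lattice $\cL$ in place of $\cO_K^d$. The first step is to characterize $\cL_{\vis,\tilde S}$ arithmetically. Fix a basis identifying $\cL$ with $\Z^n$. A point $\vy\in\cL$ is visible from $\tilde\vs\in\cL$ exactly when $\vy-\tilde\vs$ is a primitive vector of $\cL$. If $\vy-\tilde\vs=k\vw$ with $\vw\in\cL$ and $k\geq2$, then $\tilde\vs+\vw$ lies strictly between the two points. Conversely, any lattice point on the open segment gives $\vy-\tilde\vs=(1/t)(\vy'-\tilde\vs)$ with $t\in\bbQ\medcap(0,1)$, and hence a nontrivial common divisor. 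Thus
\[
\cL_{\vis,\tilde S}=\{\vy\in\cL: \vy-\tilde\vs\notin p\cL \text{ for all } p\in\cP,\ \tilde\vs\in\tilde S\},
\]
that is, $\pi_{\cL,p}(\vy)\notin\pi_{\cL,p}(\tilde S)$ for every prime $p$. Points of $\tilde S$ are excluded automatically.

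Next we truncate to the first $t$ primes. Let $\Lambda_t$ be the set of $\pi_{\phys}(\vy)$ with $\vy\in\cL$, $\pi_{\internal}(\vy)\in\cW$, and $\pi_{\cL,p}(\vy)\notin\pi_{\cL,p}(\tilde S)$ for $p\le p_t$. As in \Cref{lemma:density_of_finite_prime_obstructions}, take internal space $H=\R^m\times\prod_{p\le p_t}\cL/p\cL$ with normalized counting measures. Embed $\cL$ diagonally as $\cL''\subset\R^d\times H$ and use the window $\cW''=\cW\times\prod_{p\le p_t}\bigl((\cL/p\cL)\setminus\pi_{\cL,p}(\tilde S)\bigr)$. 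Density of $\pi_{\internal}(\cL'')$ in $H$ follows from the density of $\pi_{\internal}(p_1\cdots p_t\cL)$ in $\R^m$, which is a finite-index sublattice of a lattice with dense projection, combined with the Chinese remainder theorem. The covolume is unchanged, and $\cW''$ is regular because $\partial\cW$ has measure zero; if $\cW''$ is empty the claim is trivial. Then \eqref{eq:density_irreducible_CPS} gives
\[
\theta(\Lambda_t)=\frac{\vol(\cW)}{\covol(\cL)}\prod_{p\le p_t}\Bigl(1-\frac{s_\cL(p)}{p^n}\right).
\]
Since $\Lambda(\cW,\cL_{\vis,\tilde S})\subseteq\Lambda_t$, this yields the upper bound after letting $t\to\infty$. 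The matching lower bound follows from \Cref{lemma:aux_identity}, once we show that $\overline\theta(\Lambda_t\setminus\Lambda(\cW,\cL_{\vis,\tilde S}))\to0$.

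The main obstacle is this tail estimate, the analogue of \Cref{lemma:tail_prime_obstructions_have_zero_density}. A point of the difference lies in $\pi_{\phys}(\tilde\vs)+\Lambda(\cW-\pi_{\internal}(\tilde\vs),p\cL)$ for some $\tilde\vs\in\tilde S$ and some prime $p>p_t$. The set $\Lambda(\cW-\pi_{\internal}(\tilde\vs),p\cL)$ is $p\Lambda(p^{-1}(\cW-\pi_{\internal}(\tilde\vs)),\cL)$, a cut-and-project set with a window shrunk by a factor $p$. Its points in $TD$ correspond to points of $\Lambda(p^{-1}\cW',\cL)$ in $(T/p)D$, up to the translation by $\pi_{\phys}(\tilde\vs)$. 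We would like a bound of the form
\[
N_{\tilde\vs,p}(T)\ll \frac{T^d}{p^n}+\frac{T^{d-\delta}}{p^{\,n-\delta'}}
\]
together with a cutoff beyond which the sets are empty. The cutoff is the delicate point, since here $\pi_{\phys}$ need not interact with the arithmetic as nicely as in the quadratic case.

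Our first attempt at the cutoff uses a lattice-point count. The points $\vy-\tilde\vs\in p\cL\setminus\{\zero\}$ lie in the bounded convex region $(TD-\vs)\times(\cW-\cW)$, whose volume is $\asymp T^d$. Such a region contains no nonzero point of $p\cL$ once $p\gg T^{d/n}$, at least when the region's successive minima relative to $\cL$ behave well. For smaller $p$ we would count points of the lattice $p\cL$ in this convex body with a Davenport/Lipschitz-type bound, as in \cite[Lemma~5.4]{gringlaz_error_2025}. That bound gives $\vol/p^n$ plus a lower-order term $O(\max_j T^{j}/p^{j})$ coming from lower-dimensional projections; the term $j=d$ is $T^d/p^d$. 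This is summable in $p$ because $d\geq2$, but it does not by itself shrink as $T\to\infty$.

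To overcome this we would instead use the irreducibility of $\Lambda(\cW,\cL)$. The uniform-distribution argument of \cite{marklof_visibility_2015} (or \cite[Proposition 3.2]{marklof_free_2014}) yields $\overline\theta\le C\vol(\cW)/p^n$ for each fixed $p$, and we would control $\sum_{p>P}$ of the counts by the crude bound $T^d/p^d$. This gives
\[
\overline\theta\Bigl(\Lambda_t\setminus\Lambda(\cW,\cL_{\vis,\tilde S})\Bigr)\ll\sum_{p_t<p\le P}p^{-n}+\sum_{p>P}p^{-d}.
\]
Letting $P\to\infty$ and then $t\to\infty$ makes both sums vanish, because $d\geq2$, which would complete the proof.
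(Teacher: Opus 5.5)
Your architecture matches the paper's. This covers the arithmetic description of $\cL_{\vis,\tilde S}$ via $\pi_{\cL,p}(\vy)\notin\pi_{\cL,p}(\tilde S)$, and the truncated cut-and-project set with finitely many factors $\cL/p\cL$ (your covolume and density checks are fine). It also covers the upper bound by inclusion, the lower bound via \Cref{lemma:aux_identity}, and the covering of the tail by the sets $\pi_{\phys}(\tilde\vs)+\Lambda(\cW-\pi_{\internal}(\tilde\vs),p\cL)$ with $p>p_t$.

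The gap is in the tail estimate itself. Your final inequality needs the ``crude bound'' $N_{\tilde\vs,p}(T)\ll T^d/p^d$ to hold uniformly for all primes $p>P$ and all large $T$, but you never establish it. A lattice-point count, whether Davenport-type or via uniform discreteness, only gives $N_{\tilde\vs,p}(T)\ll 1+(T/p)^d$. The constant term, summed over infinitely many primes, diverges. To discard it you need exactly the cutoff you called delicate and then left unresolved. Your proposed cutoff $p\gg T^{d/n}$ depends on Diophantine properties of $\cL$ relative to the splitting $\R^d\oplus\R^m$, and is not available in general. Switching to the per-prime density coming from irreducibility only handles finitely many primes, so it does nothing for the primes $p>P$.

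The missing idea is that a linear cutoff suffices, and it is elementary. If $\vy-\tilde\vs=p\vv$ with $\vv\in\cL\setminus\{\zero\}$, then $p\vv$ lies in $(TD-\vs)\times(\cW-\pi_{\internal}(\tilde\vs))$. Hence $p\lambda_1(\cL)\le|p\vv|\le C(T+1)$, so only primes $p\le aT$ contribute. For those primes, $\pi_{\phys}(\vv)$ lies in $\Lambda(B_{R_{\cW}},\cL)$ intersected with a ball of radius $O(T/p)$. Uniform discreteness then gives $N_{\tilde\vs,p}(T)\ll(1+T/p)^d\ll(T/p)^d$, where the last step uses $p\le aT$. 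Dividing by $\vol(TD)\asymp T^d$ gives $\overline\theta(A_t)\ll\card{S}\sum_{p>p_t}p^{-d}$, which tends to $0$ as $t\to\infty$ because $d\geq 2$. This is exactly the paper's argument.

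Your worry that $T^d/p^d$ ``does not shrink as $T\to\infty$'' is a misdiagnosis. Nothing needs to shrink in $T$: the count is divided by $\vol(TD)$, and only the tail in $t$ has to vanish. Once the cutoff is in place, the per-prime density step and the auxiliary parameter $P$ are unnecessary; simply take $P=p_t$.
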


\begin{proof}
Consider the groups $G = \R^d$, and $H = \R^m \times \prod_{p \in \cP} H_p$,
where
$$H_p= \quotient{\cL}{p\cL},\qquad p\in \cP.$$
In the group  $H$, each finite factor is equipped with the normalized counting measure.
Both $G$ and $H$ are $\sigma$-compact and LCAGs, and the group $H$ is compactly generated. Let $\cL'$ be the image of the diagonal embedding
$$\cL\hookrightarrow G\times H, \qquad \vy \in \cL \longmapsto (\vy,(\pi_{\cL,p}(\vy))_{p \in \cP}).$$
The restriction $\pi_{\phys}'|_{\cL'}: \cL'\to G$ is injective. Using the fact that $\pi_{\internal}(\cL)$ is dense in $\R^m$ and $H_p\cong (\mathbb{Z}\setminus p\mathbb{Z})^n$, it can be shown that $\pi_{\internal}'(\cL')$ is dense in $H$.
Furthermore, $\cL'$ is a cocompact lattice.
Define 
$$\cW_p \df (\quotient{\cL}{p\cL}) \setminus \pi_{\cL,p} (\tilde{S}).$$
and
$$\cW' \df \cW \times \prod_{p \in \cP} \cW_p.$$
By construction, the associated cut-and-project set $\Lambda(\cW',\cL')$ consists exactly of those $\vx\in \R^d$ such that there exists unique $\vy\in \cL$
$$\pi_{\phys}(\vy)=\vx,\qquad \pi_{\internal}(\vy)\in \cW,\qquad \pi_{\cL,p}(\vy)\notin \pi_{\cL,p}(\tilde{S}),\qquad p\in \cP.$$
Equivalently,
$$\Lambda(\cW,\cL_{\vis,\tilde{S}})= \Lambda(\cW',\cL').$$
If $s_{\cL}(p)=p^n$ for some prime $p$, then $\cW_p= \varnothing$, consequently $\cW'=\varnothing$. Therefore, the \namecref{lemma:density_cps_latt_vis} is immediate. In the rest of the proof, we assume that 
$$s_{\cL}(p)\neq p^n,\qquad p\in \cP.$$
Let $p_i$ be the $i$-th rational prime.
Define
$$\cW_t'= \cW\times \prod_{i=1}^t\cW_{p_i}\times \prod_{i>t}H_{p_i},\qquad t\in \mathbb{N}.$$
For fixed $t\in \mathbb{N}$, the cut-and-project set
$$\Lambda(\cW_{t}',\cL'),$$
is an irreducible cut-and-project set. The density is given by
\begin{equation}
\label{eq:density_of_Lambda_t}
\theta(\Lambda(\cW_{t}',\cL')) = \frac{\vol(\cW_t')}{\covol(\cL')} =  \frac{\vol(\cW)}{\covol(\cL)}\cdot \prod_{i=1}^t\left(1- \frac{s_{\cL}(p_i)}{p_i^n}\right).  
\end{equation}
Since 
\begin{equation}
\label{eq:temp_subsets}
\Lambda(\cW',\cL')
\subseteq 
\Lambda(\cW_t',\cL')
\end{equation}
for every $t\in \mathbb{N}$. By \eqref{eq:density_of_Lambda_t}, we get
$$\overline{\theta}(\Lambda(\cW',\cL'))\leq \frac{\vol(\cW)}{\covol(\cL)}\cdot \prod_{i=1}^t\left(1- \frac{s_{\cL}(p_i)}{p_i^n}\right).$$
Letting $t\to \infty$, we obtain
$$\overline{\theta}(\Lambda(\cW',\cL'))\leq \frac{\vol(\cW)}{\covol(\cL)}\cdot \prod_{i=1}^\infty\left(1- \frac{s_{\cL}(p_i)}{p_i^n}\right).$$
Define
\begin{equation}
\label{eq: At_for_RCPS}
A_t\df \Lambda(\cW_t',\cL')\setminus \Lambda(\cW',\cL').    
\end{equation}
By \cref{eq:density_of_Lambda_t,eq:temp_subsets}, the sets $B=\Lambda(\cW',\cL')$ and $A=\Lambda(\cW'_t,\cL')$ satisfy the hypothesis of \Cref{lemma:aux_identity}. Therefore 
\begin{equation}
\label{eq:last_eq_densities}
\underline{\theta}(\Lambda(\cW',\cL')) = \theta(\Lambda(\cW'_t,\cL')) - \overline{\theta}(A_t).
\end{equation}

We state the following, postpone its proof until  the end, and finish the proof of \Cref{lemma:density_cps_latt_vis}.
\begin{lemma}
\label{lemma:last_step}
Let $A_t$ be as in \eqref{eq: At_for_RCPS}. Then
$\lim_{t\to\infty}\overline{\theta}(A_t)=0$.    
\end{lemma}

Combining the previous \namecref{lemma:last_step} with \eqref{eq:last_eq_densities} and taking the limit as $t\to \infty$,
\[
\overline{\theta}(\Lambda(\cW',\cL')) \le 
\frac{\vol(\cW)}{\covol(\cL)}\cdot \prod_{i=1}^\infty\left(1- \frac{s_{\cL}(p_i)}{p_i^n}\right) 
=
\underline{\theta}(\Lambda(\cW',\cL')).
\]
This proves that the density ${\theta}(\Lambda(\cW,\cL_{\vis,\tilde{S}}))$ along $\{TD\}_{T>0}$ exists and is equal 
\[
\frac{\vol(\cW)}{\covol(\cL)}\cdot \prod_{p\in \cP}\left(1- \frac{s_{\cL}(p)}{p^n}\right)
\]
establishing \Cref{lemma:density_cps_latt_vis}.
\end{proof}

\begin{proof}[Proof of \Cref{lemma:last_step}]
We claim that
\begin{equation}
\label{eq:At_for_random}
A_t\subseteq \bigcup_{\vs\in S}\bigcup_{\substack{p\in \mathcal{P}\\ 
 p>p_t}}(\vs+ \Lambda(\cW-\pi_{\internal}(\tilde{\vs}), p\cL)\setminus \{\vs\}),    
\end{equation}
where ${\tilde\vs} =\left(\pi_{\phys}|_{\cL}\right)^{-1}(\vs)$ is the unique lift of $\vs\in \Lambda$ to $\cL$.
Indeed, let $\vx\in A_t$ be an arbitrary element. Since $\vx\in \Lambda(\cW_t',\cL')$, there is unique $\vy\in \cL$, such that $\vy\neq \tilde{\vs}$ for all $\tilde{\vs}\in \tilde{S}$ and
$$\pi_{\phys}(\vy)= \vx, \quad \pi_{\internal}(\vy)\in \cW,$$
and 
$$\pi_{\cL,p}(\vy)\notin \pi_{\cL,p}(\tilde S),\qquad p\in \mathcal{P},\, p\leq p_t.$$
On the other hand, $\vx\notin \Lambda(\cW',\cL')$ implies that there is a prime $p>p_t$ and $\tilde{\vs}_0\in \tilde{S}$, such that
$$
\vy \equiv \tilde{\vs}_0\, \bmod p\cL. $$
Hence
$$\vx\in \pi_{\phys}(\tilde{\vs}_0) + \Lambda(\cW-\pi_{\internal}(\tilde{\vs}_0), p\cL),\qquad \textup{and}\, \qquad \vx\neq \vs_{0}.$$
This completes the proof of the claim.

For $\vs \in S$ and $p\in \cP$, write
$$
N_{\vs,p,T} \df \card{
\vs +\Lambda(\cW- \pi_{\internal}(\tilde{\vs}),p\cL)\setminus \{\vs\}\medcap TD
}.
$$
Let $\lambda_1(\cL)=\min\{|\vv|\, :\, \vv\in \cL\setminus \{\zero\}\}>0$ be the first successive minima.  Choose constants $R_D, R_S,R_{\cW}>0$ such that
\[
TD\subseteq  B_{R_DT}^d(\zero)
\quad
\text{for all }T>0,
\qquad
|\vs|\leq R_S
\quad 
\text{for all }\vs\in S,
\]
and 
$$
|\vu- \pi_{\internal}(\tilde\vs)|\leq R_{\cW}
\quad
\text{for all }\vs\in S \text{ and all }\vu \in \cW.
$$
Suppose that $N_{\vs,p,T}>0$. Then there exist $\vs \neq  \vx \in TD$, $\vu \in \cW$ 
and 
$\vv\in \cL\setminus\{\zero\}$ 
such that
$$
\vx-\vs= p\pi_{\phys}(\vv),\qquad \vu- \pi_{\internal}(\tilde{\vs})= p\pi_{\internal}(\vv).
$$
Thus,
$$p\vv= (\vx-\vs,\vu-\pi_{\internal}(\tilde{\vs})).$$
It follows that
\begin{equation*}
\begin{split}
p\lambda_1(\cL)&\leq 
|p\vv|=|(\vx-\vs,\vu-\pi_{\internal}(\tilde{\vs}))|\leq |\vx-\vs|+ |\vu- \pi_{\internal}(\tilde{\vs})|\\
&\leq |\vx|+ |\vs|+ R_{\cW}\leq R_DT+ R_S+ R_{\cW}.
\end{split}    
\end{equation*}
For $T>1$, we have $R_DT+ R_S+ R_{\cW}\leq T(R_D+ R_S+ R_{\cW})$. Hence
$$p\leq aT,\qquad a= \frac{R_D+ R_S+ R_{\cW}}{\lambda_1(\cL)}.$$
Since the constant $a$ is independent of $\vs,p,t, T$, there are only finitely many primes $p$ in \eqref{eq:At_for_random} that contribute after intersecting with $TD$.

Every point counted by $N_{\vs,p,T}$ corresponds to a point $\vv\in \cL\setminus \{\zero\}$ satisfying
$$\pi_{\phys}(\vv)\in \frac{TD-\vs}{p},\qquad \pi_{\internal}(\vv)\in \frac{\cW-\pi_{\internal}(\tilde{\vs})}{p}.$$
Note that 
$$\frac{\cW- \pi_{\internal}(\tilde{\vs})}{p}\subseteq B_{R_{\cW}}(\zero),\qquad \vs\in S,\, p\geq 2,$$
and
$$\frac{TD-\vs}{p}\subseteq B_{\frac{(R_D+R_S)T}{p}}(\zero),\qquad \vs\in S,\, p\geq 2,\, T>1.$$
To simplify the notation, we denote
$$B^{T,p}=  B_{\frac{(R_D+R_S)T}{p}}(\zero),\qquad T>1,$$
and 
$$B_{\cW}(\textbf{0})= B_{\cW}.$$
We have
$$N_{\vs,p,T}\leq \card{\Lambda(B_\cW,\cL)\medcap B^{T,p}},\qquad \vs\in S,\, T>1,\,p\in \cP$$
Therefore,
for sufficiently large $T$, by \eqref{eq:At_for_random}, we have
\begin{equation}
\begin{split}
\card{A_t\medcap TD} 
&\leq
\sum_{\vs\in S}
\sum_{p_t<p\leq aT}
N_{\vs,p,T}\\
&\leq \sum_{\vs\in S}
\sum_{p_t<p\leq aT}
\card{\Lambda(B_\cW,\cL)\medcap B^{T,p}}
\end{split}    
\end{equation}
Since $\Lambda(B_\cW,\cL)$ is uniformly discrete, there exists a $\delta>0$ such that
$$|\vx-\vy|>\delta,\qquad \vx\neq \vy\in \Lambda(B_\cW,\cL).$$
Consequently, there exists a constant $C_{\Lambda(B_\cW,\cL)}$ independent of $\vs,p, t, T$ such that
\begin{equation}
\begin{split}
\card{\Lambda(B_\cW,\cL)\medcap B^{T,p}}&\leq C_{\Lambda(B_\cW,\cL)} \left(1+ \frac{2(R_D+ R_S)T}{p}\right)^d\\  
&\leq C_{\Lambda(B_\cW,\cL)}' \left(1+ \frac{T}{p}\right)^d,\\  
\end{split}\qquad T>1,\, p\in \cP
\end{equation}
Using the fact that $p\leq aT$, we get
\begin{equation}
\begin{split}
 \card{\Lambda(B_\cW,\cL)\medcap B^{T,p}}  
&\leq C_{\Lambda(B_\cW,\cL)}''\left(\frac{T}{p}\right)^d,\\  
\end{split}\qquad T>1,\, p\in \cP, p_t<p\leq aT,
\end{equation}
where the constant $C_{\Lambda(B_\cW,\cL)}''$ is independent of $\vs,p,t,T$.
Since $S$ is finite, we get
\begin{equation}
\begin{split}
\card{A_t\medcap TD}
&\leq 
\card{S}C_{\Lambda(B_\cW,\cL)}''
\sum_{p_t<p\leq aT}\left(\frac{T}{p}\right)^d\\
\frac{\card{A_t\medcap TD}}{\vol(TD)}&\leq \card{S}C_{\Lambda(B_\cW,\cL)}'''\sum_{p_t<p\leq aT}\frac{1}{p^d}\\
\frac{\card{A_t\medcap TD}}{\vol(TD)}&\leq \card{S}C_{\Lambda(B_\cW,\cL)}'''\sum_{p_t<p}\frac{1}{p^d},
\end{split}    
\end{equation}
where $C_{\Lambda(B_\cW,\cL)}'''$ is a constant independent of $t,T$.
Taking $\limsup_{T\to \infty}$ gives
\begin{equation}\label{eq: upper_density_of_At_random}
\overline{\theta}(A_t)\leq \card{S}C_{\Lambda(B_\cW,\cL)}'''\sum_{p_t<p}\frac{1}{p^d}.    
\end{equation}
Since $d\geq 2$, 
$$\sum_{p\in \cP}\frac{1}{p^2}<\infty$$
taking $t\to \infty$ in \eqref{eq: upper_density_of_At_random}, we get
$$\lim_{t\to\infty}\overline{\theta}(A_t)=0.$$
\end{proof}

\section{Proof of  \texorpdfstring{
\Cref{prop:equality_affine_case}}{}}

Recall that $\overline{\mu}$ is invariant under translations by vectors in $\R^d$ and assigns full measure to the collection of cut-and-project sets $\Lambda(\cW,\cL)$ for $\cL \in \scrY_n$. The proof follows from the same ideas as the proof of \cite[Proposition 3.1]{gringlaz_error_2025}. For $\overline{\mu}$-a.e. cut-and-project set $\Lambda(\cW,\cL)$ and $\vs\in S$, we have
$$\Lambda(\cW,\cL)_{\vis,\{\vs\}}= \Lambda(\cW,\cL)\setminus \{\vs\}.$$
By the definition of visibility from the set $S$, we have
$$\Lambda(\cW,\cL)_{\vis,S}= \medcap_{\vs\in S}\Lambda(\cW,\cL)_{\vis,\{\vs\}}.$$
Since $S$ is countable,  for $\overline{\mu}$-a.e. cut-and-project set $\Lambda(\cW,\cL)$
$$
\Lambda(\cW,\cL)_{\vis,S}= \medcap_{\vs\in S}\Lambda(\cW,\cL)_{\vis,\{\vs\}}= \medcap_{\vs\in S}\Lambda(\cW,\cL)\setminus \{\vs\}= \Lambda(\cW,\cL)\setminus S.
$$ 

\printbibliography
\end{document}